\newtheorem{theorem}{Theorem}[section]
\newtheorem{lemma}[theorem]{Lemma}
\newtheorem{corollary}[theorem]{Corollary}
\theoremstyle{definition}
\theoremstyle{remark}
\newtheorem{remark}[theorem]{Remark}
\numberwithin{equation}{section}
\def\Var{\mathop{\rm Var}\nolimits}
\def\Cov{\mathop{\rm Cov}\nolimits}
\def\sign{\mathop{\rm sign}\nolimits}
\def\Ai{\mathop{\rm Ai}\nolimits}
\def\d{\mathrm{d}}
\def\<{\langle}
\def\>{\rangle}
\newcommand{\N}{\ensuremath{\mathbb{N}}}
\newcommand{\R}{\ensuremath{\mathbb{R}}}
\newcommand{\E}{\ensuremath{\mathbb{E}}}
\newcommand{\Pro}{\ensuremath{\mathbb{P}}}
\newcommand{\indi}{\ensuremath{\boldsymbol 1}}
\newcommand{\GOE}{{\text{GOE}}}
\newcommand{\TAP}{{\text{TAP}}}
\newcommand{\simd}{\overset{\delta }\sim}
\newcommand{\Crt}{\mathop{\mathrm{Crt}}\nolimits}
\begin{document}

\title{Random Matrices and complexity of Spin Glasses}
\author[A. Auffinger]{Antonio Auffinger}
\address{A. Auffinger\\
  Courant Institute of Mathematical Sciences\\
  New York University\\
  251 Mercer Street\\
  New York, NY 10012, USA}
\email{auffing@cims.nyu.edu}

\author[G. Ben Arous]{G\'erard Ben Arous}
\address{G. Ben Arous\\
  Courant Institute of Mathematical Sciences\\
  New York University\\
  251 Mercer Street\\
  New York, NY 10012, USA}
\email{benarous@cims.nyu.edu}

\author[J. \v Cern\'y]{Ji\v r\'\i~\v Cern\'y}
\address{J. \v Cern\'y\\
  Department of Mathematics\\
  ETH Zurich\\
  Raemistr. 101\\
  8092 Zurich\\
  Switzerland}
\email{jiri.cerny@math.ethz.ch}


\date{\today}
\begin{abstract}
  We give an asymptotic evaluation of the complexity of spherical $p$-spin
  spin-glass models via random matrix theory. This study enables us to
  obtain detailed information about the bottom of the energy landscape,
  including the absolute minimum (the ground state), the other  local
  minima, and describe an interesting layered structure of the low critical
  values for
  the Hamiltonians of these models. We also show that our approach allows
  us to compute the related TAP-complexity and extend the results known in the physics literature.
   As an independent tool, we
  prove a LDP for the $k$-th largest eigenvalue of the GOE, extending the
  results of \cite{BDG01}.
\end{abstract}
\maketitle

\section{Introduction} 
How many critical values does a typical random Morse function have on a high dimensional manifold? How many of given index, or below a given level? What is the topology of level sets? Our work
addresses the first two questions above for a class of natural random Gaussian functions on the  $N$-dimensional
sphere, known as $p$-spin spherical spin glass models in the physics literature and as isotropic models in the Gaussian process literature. The third question is covered in the forthcoming paper \cite{TG11}. 

We study here the complexity of these random functions, i.e the exponential behavior of the mean number of critical points, and more importantly of the mean number of critical points of given index in a given level set.
We introduce a new identity, based on the classical Kac-Rice formula,
relating random matrix theory and the problem of counting these critical values.
Using this identity and tools from random matrix theory (mainly large deviation results), we give an asymptotic evaluation  of the
complexity of these  spherical spin-glass models. Our study includes the important question of counting the mean number of
local minima below a given level, and in particular the question of
finding the ground state energy (the minimal value of the Hamiltonian). We show
that this question is directly related to the study of the edge of the
spectrum of the Gaussian Orthogonal Ensemble (GOE). 

The question of computing the complexity of mean-field spin glass models has 
recently been thoroughly studied in the physics literature (see for example 
  \cite{CLR03} and the references therein), mainly for a different measure of 
the complexity, i.e.~the mean number of solutions to the 
Thouless-Anderson-Palmer equations, or TAP-complexity. Our approach to the 
complexity enables us to recover known results in the physics literature about 
TAP-complexity, to compute the ground state energy (when p is even), and to describe an interesting 
layered structure of the low energy levels of the Hamiltonians of these models, 
which might prove useful for the study of the metastability of Langevin 
dynamics for these models (in longer time scales than those studied in 
  \cite{BDG01}).

The paper is organized as follows
In Section~\ref{s:notation}, we give our main results. In 
Section~\ref{s:exact}, we prove two main formulas (Theorem
  \ref{t:exactk} 
  and \ref{t:exactglobal}), relating random matrix theory (specifically the GOE) and 
spherical spin glasses. These formulas are consequences of the classical Kac-Rice 
formula (see  \cite{AT07} and \cite{AW09} for two excellent recent books giving a
very complete account of this formula and its consequences).The version we use here
is proved in section 12.1 of the book \cite{AT07}. For another modern account of the
Kac-Rice formula and similar techniques to those of section \ref{s:exact} see
Chapter 6 and section 8.3 of \cite{AW09}). 
The main ingredient to derive results from the Kac-Rice formula is the fact that,
for spherical spin-glass models, the Hessian of 
the Hamiltonian at a critical point, conditioned on the value of the 
Hamiltonian, is a symmetric Gaussian random matrix with independent entries (up to
symmetry) plus a diagonal matrix. This implies, in particular, that it is 
possible to relate statistics of critical points of index $k$ to statistics 
of the $k+1$-th smallest eigenvalue of a matrix sampled from the GOE.

In Section~\ref{s:ldp}, we compute precise logarithmic estimates of the 
complexity using the known large deviation principle (LDP) for the empirical 
spectral measure \cite{BG97} and for the largest eigenvalue of the GOE 
\cite{BDG01}. In fact we need a simple extension of the last LDP, i.e.~an LDP 
for the law of the $k$-th largest eigenvalue, which is of independent interest 
and proven in Appendix A.  

In Section~\ref{s:gs}, we show how these logarithmic results can be used to 
extract information about the lowest lying critical values. We first prove that 
the lowest lying critical points have an interesting  layered structure, 
Theorem \ref{t:belowEk}. We then show how our logarithmic results imply a lower 
bound on the ground state energy (the minimal value of the Hamiltonian). At 
this point it would be useful to have a concentration result for the number of 
local minima, for instance using a control of its second moment. Unfortunately, 
we cannot prove directly such a concentration result. Nevertheless we prove 
that our lower bound is tight (for p even), by proving the corresponding upper 
bound, using the Parisi formula for the free energy at positive temperature, as 
established by Talagrand  \cite{Tal06}.  It is remarkable that the ground state 
is indeed correctly predicted by our very naive approach, i.e.~by the vanishing 
of the ``annealed'' complexity of the number of local minima.  We expect this to 
be true for all models where Parisi's one-step replica symmetry breaking holds 
at low temperature. In Section~\ref{s:TAP}, we extend our results to the 
TAP-complexity and compare our results to the physics literature \cite{CLR03}, 
\cite{CS95}. 

In Section~\ref{s:sharp}, we show how one can go further and obtain sharper 
than logarithmic asymptotic results for the complexity, using classical tools 
from orthogonal polynomials theory, i.e.~Plancherel-Rotach asymptotics for 
Hermite functions. This section is technically involved, so we restrict it to 
the study of the global complexity, i.e.~the mean number of critical points 
below a given level, and we do not push it to include the mean number of 
critical points below a given level with a fixed index. However, we remark
that at low energy
levels, the total number of critical points coincides with the
total number of local minima.  

\subsection{Acknowledgments}
We would like to thank Ivan Corwin, Percy Deift, Silvio Franz, Jorge
Kurchan, Dimitry Panchenko and Fabio Toninelli for fruitful discussions.
We want to underline our debt to Percy Deift's friendly help for the
results of Section~\ref{s:sharp}. The second author wants to thank Silvio Franz and
Jorge Kurchan for their help during the long process of sorting out and
explaining the relevant physics results. A variant of our approach has
also been used in a prior work by Fyodorov \cite{fyodorov-2004-92} and Fyodorov and Williams \cite{FW07}. We want to thank Jean-Philippe Bouchaud for
mentioning it, and Yan Fyodorov for a useful conversation. The first two
authors were partially supported by NSF Grant OISE-0730136 and by NSF Grant DMS 0806180. We thank MSRI and IMPA who gave the opportunity to give a course on the topic given here. A more pedagogical account of this subject that includes this work should appear in the MSRI  publications series. 

\section{Notations and main results}
\label{s:notation}

We first introduce the $p$-spin spherical spin-glass model. We will fix $p$ an 
integer larger or equal to 2 (the case $p=2$ is rather trivial regarding our 
  complexity questions, it will be discussed below only  in Remark 
  \ref{remarkp2}).

A configuration $\boldsymbol\sigma$ of the $p$-spin spherical spin-glass model 
is a vector of $\mathbb R^N$ satisfying the spherical constraint
\begin{equation}
  \frac{1}{N}\sum_{i=1}^N \sigma_i^2 = 1.
\end{equation}
Thus the state space of the $p$-spin spherical spin-glass model is 
$S^{N-1}(\sqrt  N)\subset \mathbb R^N$, the Euclidean sphere of radius 
$\sqrt N$.

The Hamiltonian of the model is  the random function defined on 
$S^{N-1}(\sqrt N)$ by
\begin{equation}
  \label{Hamiltonian}
  H_{N,p}(\boldsymbol\sigma) = \frac{1}{N^{(p-1)/2}}
  \sum_{i_1, \dots, i_p=1}^N
  J_{i_1, \dots, i_p} \sigma_{i_1}\dots \sigma_{i_p}, \qquad 
  \boldsymbol\sigma=(\sigma_1,\dots,\sigma_N)\in S^{N-1}(\sqrt N),
\end{equation}
where $J_{i_1, \dots, i_p}$ are independent centered standard Gaussian random
variables. 

Equivalently, $H_{N,p}$ is the centered Gaussian process on the sphere 
$S^{N-1}(\sqrt N)$ whose covariance is given by
\begin{equation}
  \mathbb E\big[H_{N,p}(\boldsymbol\sigma)H_{N,p}(\boldsymbol\sigma')\big]
  =N^{1-p}\Big(\sum_{i=1}^N \sigma_i\sigma'_i\Big)^p 
  = N R(\boldsymbol\sigma, \boldsymbol\sigma')^p,
\end{equation}
where the normalized inner product 
$R(\boldsymbol\sigma, \boldsymbol\sigma') = 
\frac{1}{N}\< \boldsymbol\sigma, \boldsymbol\sigma'\>=\frac{1}{N}\sum_{i=1}^N \sigma_i\sigma'_i$ 
is usually called the overlap of the configurations $\boldsymbol\sigma$ 
and $\boldsymbol\sigma'$.

\medskip

We now want to introduce the complexity of spherical spin glasses. For any
Borel set $B\subset \R$ and integer $0\le k < N$, we consider the (random) number
$\Crt_{N,k}(B)$ of critical values of the
Hamiltonian $H_{N,p}$ in the set $NB=\{Nx:x\in B \}$  with index equal to
$k$,
\begin{equation}
  \label{defWk}
  \Crt_{N,k}(B) = 
  \sum_{\boldsymbol\sigma: \nabla H_{N,p}(\boldsymbol\sigma) = 0 } 
  \indi\{ H_{N,p}(\boldsymbol\sigma) \in NB\} \indi\{
    i(\nabla^2 H_{N,p}(\boldsymbol\sigma)) = k\}.
\end{equation}
Here $\nabla$, $\nabla^2$ are the gradient and the Hessian restricted to 
$S^{N-1}(\sqrt N)$, and  $i(\nabla^2 H_{N,p}(\boldsymbol\sigma))$ is the index of 
$\nabla^2 H_{N,p}$ at $\boldsymbol\sigma$, that is the number of negative eigenvalues 
of the Hessian $\nabla^2 H_{N,p}$.  We will also consider the (random) total number 
$\Crt_{N}(B)$ of critical values of the Hamiltonian $H_{N,p}$ in the set $NB$ 
(whatever their index)
\begin{equation}
  \label{e:defW}
  \Crt_{N}(B) 
  = \sum_{\boldsymbol\sigma: \nabla H_{N,p}(\boldsymbol\sigma) = 0 } 
  \indi\{ H_{N,p}(\boldsymbol\sigma) \in NB\}.
\end{equation}

Our results will give exact formulas and asymptotic estimates for the mean values 
$ \E( \Crt_{N,k}(B))$ and $\E(\Crt_{N}(B))$, when $N\to\infty$ 
and $B$, $k$ and $p$ are fixed. In particular, we will compute 
$\lim \frac{1}{N} \log \E \Crt_{N,k}(B)$ and  
$\lim \frac{1}{N} \log \E \Crt_{N}(B)$  as $N$ tends to infinity.

\medskip

Before giving the central identity relating the GOE to the complexity of 
spherical spin-glass models, we fix our notations for the GOE.

The GOE ensemble is a probability measure on the space of  real symmetric 
matrices. Namely, it is the probability distribution of the  $N \times N$ real 
symmetric random matrix $M^N$, whose entries $(M_{ij}, i\leq j)$ are 
independent centered Gaussian random variables with variance
\begin{equation}
  \label{e:Ms}
  \mathbb E M_{ij}^2 = \frac{1+\delta_{ij}}{2N}.
\end{equation}
We will denote by  $\E_\GOE = \E^N_{\GOE}$ the expectation under the GOE ensemble of
size $N\times N$.

Let $\lambda^N_0 \leq \lambda^N_1\le\dots\leq \lambda^N_{N-1}$ be the ordered 
eigenvalues of $M^N$. We will denote by 
$L_N= \frac{1}{N} \sum_{i=0}^{N-1} \delta_{\lambda^N_i}$ the (random) spectral 
measure of $M^N$, and by $\rho_N(x)$ the density of the (non-random) 
probability measure $\E_{\GOE}(L_N)$. The function $\rho_N(x)$ is usually 
called the (normalized) one-point correlation function and satisfies
\begin{equation}
  \label{e:rhonomr}
  \int_{\mathbb R} f(x)\rho_N(x)dx = \frac{1}{N} \,
  \E^N_{\GOE}\Big[\sum_{i=0}^{N-1} f(\lambda^N_i)\Big].
\end{equation}

We now state our main identity
\begin{theorem}
  \label{t:exactk}
  The following identity holds for all $N$, $p\geq 2$,  
  $k\in\{0,\dots,N-1\}$, and for all Borel sets $B \subset \mathbb R$,
  \begin{equation}
    \label{e:exactk}
    \E [\Crt_{N,k}(B)]
    = 2\sqrt\frac 2p (p-1)^{\frac N2} \E^N_{\GOE} \bigg[ e^{-N
        \frac{p-2}{2p} (\lambda^N_{k})^2} 
      \indi\Big\{ \lambda^N_{k} \in \sqrt{\frac{p}{2(p-1)}}B \Big\}
      \bigg].
  \end{equation}
\end{theorem}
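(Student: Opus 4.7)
The plan is to apply the Kac-Rice formula for smooth Gaussian fields on manifolds (in the form of Theorem 12.1.1 of \cite{AT07}) to $H_{N,p}$ on $S^{N-1}(\sqrt N)$, which expresses $\E[\Crt_{N,k}(B)]$ as an integral over the sphere of
\[
\varphi_{\nabla H(\boldsymbol\sigma)}(0)\cdot \E\bigl[|\det \nabla^2 H(\boldsymbol\sigma)|\,\indi\{H(\boldsymbol\sigma)\in NB\}\,\indi\{i(\nabla^2 H(\boldsymbol\sigma))=k\}\,\big|\,\nabla H(\boldsymbol\sigma)=0\bigr],
\]
where $\varphi_{\nabla H(\boldsymbol\sigma)}$ is the density of the tangential gradient at $\boldsymbol\sigma$. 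Since the covariance $N R(\boldsymbol\sigma,\boldsymbol\sigma')^p$ depends only on the overlap, the law of $H_{N,p}$ is invariant under orthogonal rotations, so the integrand is constant in $\boldsymbol\sigma$. I would therefore evaluate it at the north pole $\boldsymbol n \in S^{N-1}(\sqrt N)$ and multiply the result by the surface area $|S^{N-1}(\sqrt N)|$.

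The second step is a direct Gaussian calculation: differentiate the covariance $N R(\boldsymbol\sigma, \boldsymbol\sigma')^p$ and specialize to $\boldsymbol\sigma = \boldsymbol\sigma' = \boldsymbol n$ in a fixed orthonormal frame of the tangent space. This should yield three facts: first, $\nabla H(\boldsymbol n)$ is independent of the pair $(H(\boldsymbol n), \nabla^2 H(\boldsymbol n))$; second, the components of $\nabla H(\boldsymbol n)$ in the tangent frame are i.i.d.\ centered Gaussians of an explicit variance, giving $\varphi_{\nabla H}(0)$ in closed form; third, conditionally on $\nabla H(\boldsymbol n)=0$, the tangential Hessian decomposes as
\[
\nabla^2 H(\boldsymbol n) \,\stackrel{d}{=}\, \alpha_N\, M^{N-1} \,-\, \beta_N\, H(\boldsymbol n)\, \Id,
\]
with $M^{N-1}$ an $(N-1)$-dimensional GOE matrix independent of $H(\boldsymbol n)$ and with $\alpha_N, \beta_N$ explicit scalars in $N$ and $p$. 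This is precisely the structural observation highlighted in the introduction: conditional on the value of the Hamiltonian, the Hessian is a rescaled GOE plus a scalar multiple of the identity. Substituting this decomposition and integrating out $H(\boldsymbol n) \sim \mathcal N(0,N)$ reduces the Kac-Rice integral to
\[
C_N^{(1)} \int_{B} \E^{N-1}_{\GOE}\Bigl[\bigl|\det\bigl(M^{N-1}-tI\bigr)\bigr|\,\indi\{i(M^{N-1}-tI)=k\}\Bigr]\, e^{-C_N^{(2)} t^2}\, dt,
\]
where $C_N^{(1)}, C_N^{(2)}$ collect the constants coming from the surface area, the gradient density, and $\alpha_N, \beta_N$.

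The last and most delicate step is to re-express this $(N-1)$-dimensional GOE integral as the $N$-dimensional GOE expectation appearing on the right-hand side of \eqref{e:exactk}. The key observation is that $|\det(M^{N-1}-tI)| = \prod_i|\mu_i-t|$, where $\mu_1,\dots,\mu_{N-1}$ are the eigenvalues of $M^{N-1}$. Folding this product into the Vandermonde factor $\prod_{i<j}|\mu_i-\mu_j|$ appearing in the $(N-1)$-dimensional GOE joint eigenvalue density produces the full Vandermonde of the $N$-tuple $(\mu_1,\dots,\mu_{N-1},t)$, while the indicator $\indi\{i(M^{N-1}-tI)=k\}$ forces $t$ to be exactly the $(k+1)$-th smallest of these $N$ numbers, i.e.\ $t = \lambda^N_k$. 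The rescaling $\lambda^N_k = \sqrt{p/(2(p-1))}\,t$ is then calibrated precisely so that the diagonal weight $e^{-N(\lambda^N_k)^2/2}$ of the $N$-dimensional GOE absorbs the exponent $e^{-C_N^{(2)} t^2}$ coming from the Gaussian density of $H(\boldsymbol n)$, leaving the residual reweighting $e^{-N(p-2)(\lambda^N_k)^2/(2p)}$ on the singled-out eigenvalue and producing the domain $\sqrt{p/(2(p-1))}B$. The main obstacle is not conceptual but entirely bookkeeping: tracking the surface area of $S^{N-1}(\sqrt N)$, the Gaussian density of $\nabla H$, the scalars $\alpha_N$ and $\beta_N$, and the Selberg-type normalization constants relating the $(N{-}1)$- and $N$-dimensional GOE partition functions so as to recover the precise prefactor $2\sqrt{2/p}\,(p-1)^{N/2}$.
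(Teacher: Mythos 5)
Your proposal follows essentially the same route as the paper's proof: Kac-Rice on the sphere reduced to the north pole by rotational invariance (Lemma~\ref{l:AT}), the Gaussian computation of covariances showing the gradient is independent of $(f,\nabla^2 f)$ and that conditionally on the field value the Hessian is a rescaled GOE plus a multiple of the identity (Lemma~\ref{l:conditioning}), and the Vandermonde-absorption trick that promotes the $(N{-}1)$-dimensional GOE expectation of $|\det(M^{N-1}-tI)|\,\indi\{i(M^{N-1}-tI)=k\}$ to an $N$-dimensional GOE expectation with $t$ playing the role of $\lambda^N_k$ (Lemma~\ref{LemmaGOEAp}). The only cosmetic difference is that the paper rescales to the unit-variance process $f_{N,p}$ on $S^{N-1}$ before applying Kac-Rice, which merely simplifies the bookkeeping you defer to the end.
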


Summing the preceding identities for $0\leq k\leq N-1$, we easily find the 
mean total number of critical points given a level of energy and relate it to the 
one-point function.

\begin{theorem} 
  \label{t:exactglobal}
  The following identity holds for all $N$, $p\geq 2$, and for all
  Borel sets $B\subset \mathbb R$,
  \begin{equation}
    \label{e:final}
    \E [\Crt_{N}(B)]=
    2N\sqrt\frac 2p (p-1)^{\frac N2}
    \int_{\sqrt{\frac{p}{2(p-1)}}B}
    \exp\Big\{-\frac{N(p-2)x^2}{2p} \Big\}
    \rho_N(x)\,\d x .
  \end{equation}
\end{theorem}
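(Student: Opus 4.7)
The plan is to derive Theorem \ref{t:exactglobal} as a direct corollary of Theorem \ref{t:exactk} by summing the per-index identity over $k$ and recognizing the resulting sum over eigenvalues as an integral against the one-point correlation function $\rho_N$ via \eqref{e:rhonomr}. Since indices are mutually exclusive at a given critical point, the total count decomposes as $\Crt_N(B)=\sum_{k=0}^{N-1}\Crt_{N,k}(B)$, so by linearity of expectation
\begin{equation*}
\E[\Crt_N(B)]=\sum_{k=0}^{N-1}\E[\Crt_{N,k}(B)].
\end{equation*}

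Next, I would apply Theorem \ref{t:exactk} to each summand. The prefactor $2\sqrt{2/p}(p-1)^{N/2}$ does not depend on $k$, so it can be pulled outside the sum, leaving
\begin{equation*}
\E[\Crt_N(B)]=2\sqrt{\tfrac 2p}(p-1)^{N/2}\,\E^N_{\GOE}\!\left[\sum_{k=0}^{N-1} e^{-N\frac{p-2}{2p}(\lambda^N_k)^2}\indi\Big\{\lambda^N_k\in\sqrt{\tfrac{p}{2(p-1)}}B\Big\}\right].
\end{equation*}
The interchange of sum and expectation is trivial here because the sum is finite.

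To recast the inner sum, I would choose the test function $f(x)=e^{-N(p-2)x^2/(2p)}\indi\{x\in\sqrt{p/(2(p-1))}B\}$, which is bounded and measurable, hence admissible in \eqref{e:rhonomr}. Since the sum runs over all ordered eigenvalues, it equals $\sum_{i=0}^{N-1} f(\lambda^N_i)$, whose GOE expectation is $N\int_{\mathbb R}f(x)\rho_N(x)\,\d x$ by definition of $\rho_N$. Substituting back yields exactly \eqref{e:final}.

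There is no real obstacle here: the step is essentially bookkeeping once Theorem \ref{t:exactk} is established. The only conceptual point worth double-checking is that the index $i(\nabla^2 H_{N,p}(\boldsymbol\sigma))$ takes values in $\{0,1,\dots,N-1\}$ almost surely (the Hessian on the sphere is an $(N-1)\times(N-1)$ symmetric matrix, so the index ranges from $0$ to $N-1$), which ensures the partition $\Crt_N(B)=\sum_{k=0}^{N-1}\Crt_{N,k}(B)$ is complete and that the summation range in \eqref{e:rhonomr} matches the summation range over $k$.
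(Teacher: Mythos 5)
Your proof is correct and is essentially identical to the paper's own argument: the paper also obtains Theorem~\ref{t:exactglobal} by summing \eqref{e:exactk} over $k\in\{0,\dots,N-1\}$ and noting that the extra factor of $N$ arises because $\rho_N$ is the normalized one-point correlation function as in \eqref{e:rhonomr}. Your observation that the Hessian on $S^{N-1}$ is $(N-1)\times(N-1)$, so that $k$ genuinely ranges over $\{0,\dots,N-1\}$ and the decomposition $\Crt_N(B)=\sum_{k=0}^{N-1}\Crt_{N,k}(B)$ is exhaustive, is a correct and worthwhile sanity check.
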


\begin{remark}
  \label{remarkp2}
  We want to discuss here very briefly the trivial case where $p=2$. This 
  case is easier to understand since there is a simpler connection 
  between the $2$-spin spherical model and the random matrix theory. In fact, 
  if $M$ is a $N \times N$ GOE matrix, the Hamiltonian of the $2$-spin 
  spherical model can be viewed as the quadratic form defined by the 
  symmetric matrix $M$ restricted to the sphere $S^{N-1}(\sqrt{N})$,
  \begin{equation}
    H_{N,2}(x) = (Mx, x).
  \end{equation}
  Therefore, if $p=2$, the $2N$ critical points of the Hamiltonian are simply the 
  eigenvectors of $M$, while the critical values are the eigenvalues 
  of $M$. Our 
  formulas \eqref{e:exactk}, \eqref{e:final} simplify greatly for $p=2$ and are compatible with this obvious observation.
 Indeed, for $p=2$, these formulas read
  
  \begin{equation}
    \label{e:exactkp=2}
    \E [\Crt_{N,k}(B)]
    = 2  P^N_{\GOE} [  \lambda^N_{k} \in B ]
  \end{equation}
and 
   \begin{equation}
    \label{e:finalp=2}
    \E [\Crt_{N}(B)]= 2N \rho_N(B).
  \end{equation} 
\end{remark}

We are now in a position to give our main results about the asymptotic complexity of 
spherical spin-glass models. We will see that the following number is an 
important threshold
\begin{equation}
  \label{Ec}
  E_{\infty}=E_{\infty}(p) = 2\sqrt{\frac{p-1}{p}}.
\end{equation}
Let  $I_1:(-\infty,-E_{\infty}]\to \mathbb R$ be given by
\begin{equation}
  \label{I_1}
  I_1(u)= \frac 2{E_{\infty}^2}\int_{u}^{-E_{\infty}} (z^2-E_{\infty}^2)^{1/2} \d z 
  = -\frac{u}{E_{\infty}^2}\sqrt{u^2-E_{\infty}^2} 
  - \log \Big(-u + \sqrt{u^2-E_{\infty}^2} \Big) 
  +  \log E_{\infty}.
\end{equation}
\begin{remark}
  \label{r:Is}
  In \cite{BDG01}, it is proved that $I_1(u)$ is the rate function of the LDP 
  for the smallest eigenvalue of the GOE with the proper normalization of the 
  variance of the entries (more precisely, on its domain $I_1(u)=I_1(-u;E_{\infty}/2)$, 
  see \eqref{e:LDPrate}).
\end{remark}

We now define the following important functions which will describe the 
asymptotic complexity of the $p$-spin spherical spin-glass models.
\begin{equation}
  \label{e:complexity}
  \Theta_p(u)  =
  \begin{cases}
    \frac{1}{2}\log
    (p-1) -   \frac{p-2}{4(p-1)}u^2 - I_1(u),
    &\text{if } u \leq -E_{\infty},\\
    \frac{1}{2}\log
    (p-1) -    \frac{p-2}{4(p-1)}u^2,
    &\text{if } -E_{\infty} \leq u \leq 0, \\
    \frac{1}{2}\log(p-1),
    & \text{if }  0 \leq u, 
  \end{cases}
\end{equation}
and, for any integer  $k \geq 0$,
\begin{equation}
  \label{e:thetakp}
  \Theta_{k,p}(u) =
  \begin{cases}
    \frac{1}{2}\log (p-1) 
    -   \frac{p-2}{4(p-1)}u^2 - (k+1) I_1(u),
    &\text{if } u \leq -E_{\infty},\\
    \frac{1}{2}\log (p-1) 
    -  \frac{p-2}{p}, & \text{if }  u\ge -E_{\infty}.
  \end{cases}
\end{equation}
We note that $\Theta_p(u), \Theta_{k,p}(u)$ are non-decreasing, continuous 
functions on $\R$, with maximal values $\frac{1}{2} \log(p-1)$, 
$\frac{1}{2} \log(p-1) - \frac{p-2}{p}$, respectively (see
  Figure~\ref{f:thetas}). 

We now give the logarithmic asymptotics of the complexity of spherical spin glasses. 
To simplify the statement, we fix  $B = (-\infty,u)$, $u\in \mathbb R$,
and we write $\Crt_{N,k}(u)=\Crt_{N,k}(B)$, $\Crt_{N}(u)=\Crt_{N}(B)$.
\begin{theorem}
  \label{t:complexityk}
  For all $ p \geq 2$ and $k \geq 0$ fixed, 
  \begin{equation}
    \label{critical2}
    \lim_{N\to\infty}
    \frac{1}{N} \log \E \Crt_{N,k}(u) = \Theta_{k,p}(u).
  \end{equation}
\end{theorem}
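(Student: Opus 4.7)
The plan is to combine the exact identity of Theorem \ref{t:exactk} with the LDP for the $k$-th smallest GOE eigenvalue (stated in the introduction as an extension of \cite{BDG01}, proved in Appendix~A) via a Varadhan-type Laplace argument.

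\textbf{Step 1: Reduction to a GOE Laplace asymptotic.} Applying Theorem \ref{t:exactk} with $B = (-\infty, u)$, taking $\tfrac{1}{N}\log$, and noting that $\tfrac{1}{N}\log(2\sqrt{2/p}) \to 0$ while the factor $(p-1)^{N/2}$ contributes $\tfrac12 \log(p-1)$, the result reduces to identifying
\begin{equation*}
\Lambda_k(u) := \lim_{N\to\infty} \frac{1}{N}\log \E^N_{\GOE}\bigl[ e^{-N \frac{p-2}{2p}(\lambda^N_k)^2}\, \indi\{\lambda^N_k \le c_u\}\bigr], \qquad c_u := \sqrt{\tfrac{p}{2(p-1)}}\, u.
\end{equation*}
Under the GOE normalization \eqref{e:Ms} the spectrum is supported on $[-\sqrt 2, \sqrt 2]$, and $c_u \le -\sqrt 2 \Leftrightarrow u \le -E_\infty$.

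\textbf{Step 2: Varadhan's lemma.} For fixed $k$, $\lambda^N_k$ concentrates at the left edge $-\sqrt 2$. The LDP of Appendix~A furnishes, at speed $N$, a rate function $J_k$ which vanishes at $-\sqrt 2$, equals $(k+1)$ times the rate function of $\lambda^N_0$ on $(-\infty, -\sqrt 2]$, and is $+\infty$ on $(-\sqrt 2, \infty)$. Under the change of variables $x = \sqrt{p/(2(p-1))}\, y$ this rate becomes $(k+1) I_1(y)$ in the notation of \eqref{I_1} (cf.\ Remark~\ref{r:Is}). Since the integrand is bounded by $1$ and the penalty $e^{-N(p-2) x^2/(2p)}$ is continuous, Varadhan's lemma yields
\begin{equation*}
\Lambda_k(u) = \sup_{y \le u}\Bigl\{ -\frac{p-2}{4(p-1)} y^2 - (k+1) I_1(y)\, \indi\{y \le -E_\infty\} \Bigr\}.
\end{equation*}
The only subtlety is the lower bound: the indicator $\indi\{\cdot \le c_u\}$ is not continuous, so I would approximate it from below by continuous bump functions on $(-\infty, c_u - \varepsilon]$ and let $\varepsilon\downarrow 0$, using continuity of the objective at the maximizer.

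\textbf{Step 3: Identify the supremum.} Write $f(y) = -\frac{p-2}{4(p-1)} y^2 - (k+1) I_1(y)$ on $(-\infty, -E_\infty]$. Differentiating \eqref{I_1} gives $I_1'(y) = -\frac{2}{E_\infty^2}(y^2 - E_\infty^2)^{1/2} \le 0$ for $y \le -E_\infty$, so
\begin{equation*}
f'(y) = -\frac{p-2}{2(p-1)} y + (k+1)\frac{2}{E_\infty^2}(y^2 - E_\infty^2)^{1/2} > 0,
\end{equation*}
as each summand is nonnegative with the first strictly positive for $p > 2$. Hence $f$ is strictly increasing, $f(-E_\infty) = -\frac{p-2}{p}$, and the sup splits: if $u \le -E_\infty$ it is attained at $y = u$ and equals $-\frac{p-2}{4(p-1)} u^2 - (k+1) I_1(u)$; if $u \ge -E_\infty$ the effective region collapses to $\{-E_\infty\}$ (since above $-E_\infty$ the rate is $+\infty$, yet the penalty $-\frac{p-2}{2p}(\lambda^N_k)^2$ is bounded and concentrates at $-\frac{p-2}{p}$), giving $-\frac{p-2}{p}$. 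Adding the prefactor $\tfrac12\log(p-1)$ matches the two branches of $\Theta_{k,p}$ in \eqref{e:thetakp}; continuity of $\Theta_{k,p}$ at $u = -E_\infty$ is immediate from the explicit values.

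The main difficulty is the Varadhan lower bound near the boundary $c_u = -\sqrt{2}$ (i.e.\ $u = -E_\infty$), where the rate function is not continuous from the right. This is handled by the $\varepsilon$-approximation above together with continuity of $f$ at $-E_\infty$; the case $p = 2$ is excluded by Remark~\ref{remarkp2}.
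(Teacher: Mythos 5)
Your proof is correct and takes essentially the paper's route: combine the exact identity of Theorem~\ref{t:exactk} with the LDP for $\lambda^N_k$ of Appendix~\ref{a:LDPsection} via Varadhan's lemma, then identify the supremum on each side of $-E_\infty$ (the paper handles the indicator by taking the Varadhan lower bound over the open set $(-\infty,t)$ and upper bound over the closed set $(-\infty,t]$, which does the same job as your bump-function approximation). Your exclusion of $p=2$ at the end is unnecessary---the theorem includes that case, Remark~\ref{remarkp2} does not remove it, and the argument still applies since $-(k+1)I_1'(y)>0$ for $y<-E_\infty$ already makes $f$ strictly increasing there even when the quadratic term vanishes.
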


\begin{figure}[ht]
  \includegraphics[width=10cm]{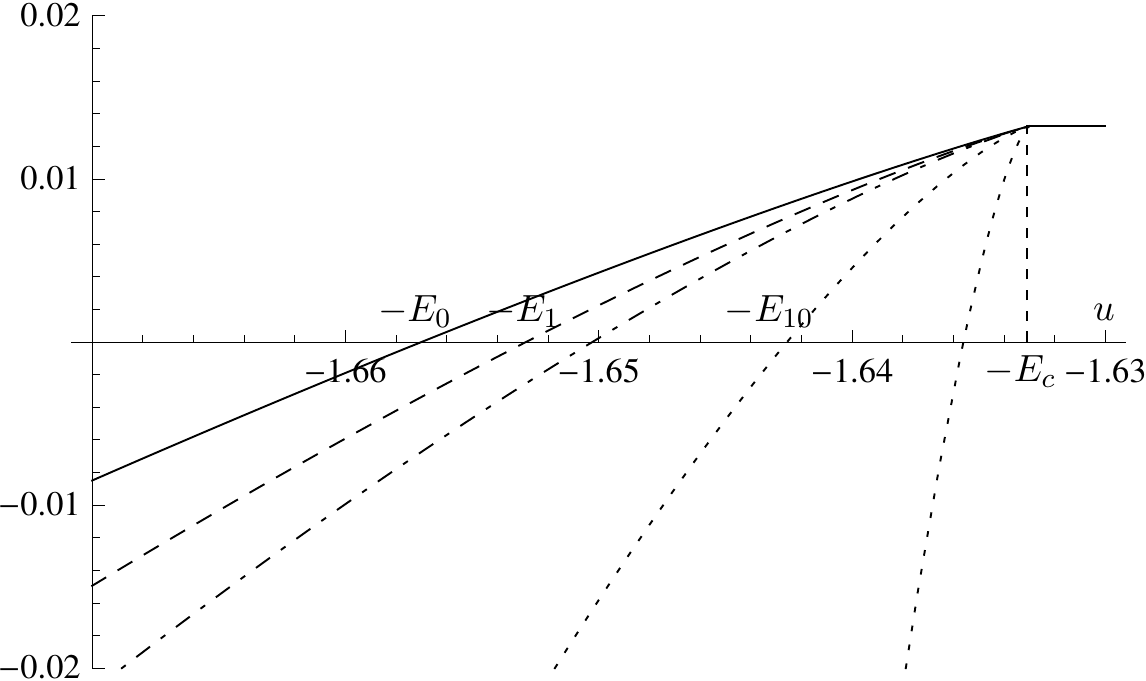}
  \caption{The functions $\Theta_{k,p}$ for $p=3$ and $k=0$ (solid), $k=1$
  (dashed), $k=2$ (dash-dotted), $k=10$, $k=100$ (both dotted). All these
  functions agree for $u\ge -E_{\infty}$.} 
  \label{f:thetas}
\end{figure}

\begin{remark}
  \label{r:symmetry1} 
  It is straightforward to extend the last theorem to
  general Borel sets $B$ (see Remark \ref{r:remextension}). Furthermore,
  by symmetry, Theorem \ref{t:complexityk} also holds as stated for the
  random variables $\Crt_{N,N-l}((u,\infty))$, with $l \geq 1$ fixed, if one
  replaces $\Theta_{k,p}(u)$ by $\Theta_{l-1,p}(-u)$. 
\end{remark}  
\begin{remark} 
  \label{r:symmetry} 
  For the local 
  minima, i.e.~when $k=0$, the limit formula given by Theorem 
  \ref{t:complexityk} is precisely the formula given by physicists in 
  \cite{CS95}, \cite{CLR03}. Arguing via a TAP approach (to be 
    described below in Section~\ref{s:TAP}), they derive the following 
  asymptotic complexity of local minima,
  \begin{equation}
    g(E)=\frac{1}{2}
    \Big\{\frac{2-p}{p} - \log \Big(\frac{p z^2}{2}\Big)
      +\frac{p-1}{2}z^2 - \frac{2}{p^2z^2}\Big\},
  \end{equation}
  where $z=\frac{1}{p-1}\big(-E-(E^2-\frac{2(p-1)}{p})^{1/2}\big)$. In 
  Section~\ref{s:TAP}, we show that, in fact, 
  $g(E)=\Theta_{0,p}(2^{-1/2}E)$. The factor $2^{-1/2}$ comes from 
  the fact that in \cite{CS95} the Hamiltonian $H$ has a different 
  normalization.
\end{remark}

We also provide an exponential asymptotic for the expected total number of 
critical values below level $Nu$.

\begin{theorem}
  For all $ p \geq 2$,
  \label{t:complexityglobal}
  \begin{equation}
    \label{critical}
    \lim_{N\rightarrow\infty}
    \frac{1}{N} \log \E \Crt_{N}( u) =  \Theta_p(u).
  \end{equation}
\end{theorem}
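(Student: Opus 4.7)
The plan is to deduce this theorem from the exact formula in Theorem~\ref{t:exactglobal} by a Laplace-type asymptotic analysis of the integral against $\rho_N$. Setting $x_0 := u\sqrt{p/(2(p-1))}$ and $\phi(x) := (p-2)x^2/(2p)$, Theorem~\ref{t:exactglobal} reads
\[
\E [\Crt_N(u)] = 2N\sqrt{\tfrac{2}{p}}\,(p-1)^{N/2} \int_{-\infty}^{x_0} e^{-N\phi(x)}\rho_N(x)\,\d x.
\]
Taking $\tfrac{1}{N}\log$, the deterministic prefactor contributes $\tfrac{1}{2}\log(p-1)+o(1)$, and it suffices to compute
\[
L(u) := \lim_{N\to\infty} \frac{1}{N}\log \int_{-\infty}^{x_0} e^{-N\phi(x)}\rho_N(x)\,\d x.
\]
Observe that $x_0$ crosses the left edge $-\sqrt 2$ of the semicircle support (in the normalization~\eqref{e:Ms}) exactly when $u$ crosses $-E_\infty$, which accounts for the three-piece definition of $\Theta_p$.

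The Laplace analysis splits into three regimes according to the position of $x_0$. For $u\ge 0$, the interval of integration contains $x=0$, where $\phi$ vanishes and $\rho_N$ stays positive and bounded; hence the integral is of order one and $L(u)=0$. For $-E_\infty\le u\le 0$, the endpoint $x_0$ lies in the bulk and $\phi$ is monotone decreasing on $[-\sqrt 2, x_0]$, so the integrand is maximized (up to polynomial factors) at $x_0$ and $L(u)=-\phi(x_0)=-(p-2)u^2/(4(p-1))$. For $u\le -E_\infty$, the endpoint $x_0$ lies outside the bulk, where $\rho_N$ is exponentially small. Let $\tilde I(x):=\int_x^{-\sqrt 2}\sqrt{z^2-2}\,\d z$ denote the rate function for the smallest eigenvalue of the GOE in the normalization~\eqref{e:Ms}; a direct change of variables shows $\tilde I(x_0)=I_1(u)$. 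Since
\[
(\phi+\tilde I)'(x) = \frac{(p-2)x}{p}-\sqrt{x^2-2} < 0 \quad \text{for } x<-\sqrt 2,
\]
the function $\phi+\tilde I$ is strictly decreasing on $(-\infty,-\sqrt 2]$, so its minimum on $(-\infty, x_0]$ is attained at the endpoint $x_0$, giving $L(u)=-\phi(x_0)-I_1(u)$. Adding $\tfrac{1}{2}\log(p-1)$ in each case reproduces $\Theta_p(u)$.

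The main obstacle is the exponential control of $\rho_N$ outside the bulk needed for the third case. I would obtain it via the sandwich
\[
\frac{1}{N}\Pro[\lambda_0^N \le x_0] \le \int_{-\infty}^{x_0}\rho_N(x)\,\d x = \frac{1}{N}\E\bigl[\#\{i:\lambda_i^N \le x_0\}\bigr] \le \Pro[\lambda_0^N \le x_0],
\]
the upper inequality following from $\{\lambda_k^N \le x_0\} \subseteq \{\lambda_0^N \le x_0\}$ applied termwise in the expansion $\E[\#\{i:\lambda_i^N \le x_0\}] = \sum_{k=0}^{N-1}\Pro[\lambda_k^N \le x_0]$. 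The LDP of~\cite{BDG01} for $\lambda_0^N$ (cf.~Remark~\ref{r:Is}) then pins both sides to the same exponential rate $-\tilde I(x_0)$. Passing from this integrated estimate to the Laplace evaluation of $\int e^{-N\phi}\rho_N$ is then routine: the upper bound pulls $e^{-N\phi(x_0)}$ out in front using monotonicity of $\phi$ on $(-\infty, x_0]$, while the lower bound restricts to a small window $(x_0-\varepsilon, x_0]$ on which $\phi$ is nearly constant and on which the integrated density retains the correct exponential order thanks to the strict monotonicity of $\tilde I$ on $(-\infty, -\sqrt 2]$. Continuity of $\Theta_p$ at $u=-E_\infty$ follows automatically from $I_1(-E_\infty)=0$, so the three pieces glue into a single function.
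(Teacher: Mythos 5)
Your proposal is correct and takes essentially the same route as the paper: both start from the exact identity of Theorem~\ref{t:exactglobal}, interpret the integral as $\E\int_{-\infty}^{x_0}e^{-N\phi}\,\d L_N$, and then run a Laplace-type argument that reduces the regime $u\le -E_\infty$ to the LDP for $\lambda_0$ from~\cite{BDG01} via the sandwich $\tfrac1N\Pro[\lambda_0\le x_0]\le\int_{-\infty}^{x_0}\rho_N\le\Pro[\lambda_0\le x_0]$, and reduces the bulk regime to the convergence of $L_N$ to the semicircle law. The paper phrases the sandwich in terms of $\E[e^{N\phi(\cdot)}\indi_{\lambda_0\le t}]$ and invokes Varadhan's lemma, whereas you factor out $e^{-N\phi}$ first and apply the LDP directly, but these are the same computation.
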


\begin{remark}
  As a simple consequence of Theorems \ref{t:complexityk} and 
  \ref{t:complexityglobal}, one can easily compute the logarithmic asymptotics of 
  the mean total number of critical points 
  $\E(\Crt_{N}(\mathbb R))$  and the mean total number of 
  critical points of index $k$, 
  $\E(\Crt_{N,k}(\mathbb R))$.
  \begin{align}
    &\lim_{N\rightarrow\infty}
    \frac{1}{N} \log  \E(\Crt_{N}(\mathbb R))=\frac{1}{2}\log(p-1) ,\\
    &\lim_{N\rightarrow\infty}
    \frac{1}{N} \log  \E(\Crt_{N,k}(\mathbb R))= \frac{1}{2}\log(p-1)-
    \frac{p-2}{p} .
  \end{align}
  This agrees with formula (13) of \cite{CS95}. Note the fact that the mean number of critical points of index k is independent of k (at least in these logarithmic estimates)
\end{remark}
\begin{remark}
  Theorems $\ref{t:complexityk}$ and $\ref{t:complexityglobal}$ are simple 
  consequences of large deviation properties for random matrices, using the 
  results of \cite{BDG01} and of \cite{BG97}. In Appendix~\ref{a:LDPsection}, 
  we recall the LDP for the empirical spectral measure of the GOE
  proved in \cite{BG97}, and we prove a LDP for the $k$-th 
  largest eigenvalue of a GOE matrix, extending the results of \cite{BDG01}.
\end{remark}

\begin{remark}
  \label{remarkp2bis}
  The case $p=2$ is particular since the total complexity is then always non positive.
  But for any $p\geq3$ the complexity is positive.
\end{remark}

Using our results about complexity, we now want to  extract some 
information about the geometry of the bottom of the energy landscape $H_{N,p}$. 
For any integer $k \geq 0$, we introduce $E_k = E_k(p)>0$ as the unique 
solution to (see Figure~\ref{f:thetas} again).
\begin{equation}
  \label{e:E_k}
  \Theta_{k,p}(-E_k(p)) = 0.
\end{equation}

These numbers will be crucial in the description of the ground state and 
of the low-lying critical values of the Hamiltonian $H_{N,p}$. It is 
important to note that, for any fixed $p \geq 3$, the sequence 
$(E_k(p))_{k\in \N}$ is strictly decreasing, and converges  to $E_{\infty}(p)$ as 
$k\to\infty$. The first result we want to derive is about the 
ground state energy, which we define as the (normalized) minimum of the 
Hamiltonian  $H_{N,p}$
\begin{equation}
  \label{GroundState}
  GS^N = \frac{1}{N} \inf_{\boldsymbol\sigma \in S^{N-1}(\sqrt N)}
  H_{N,p}(\boldsymbol\sigma).
\end{equation}

\begin{theorem}\label{t:groundstate}
  For every  $p\ge 3$ $$\liminf_{N\to\infty}GS^N\ge -E_0(p)$$
 Moreover for $p\ge 4$ even,
  \begin{equation}
    \lim_{N\to\infty}GS^N=-E_0(p)\qquad\text{in probability}.
  \end{equation}
\end{theorem}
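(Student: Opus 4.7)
The lower bound valid for all $p\ge 3$ follows from a first-moment argument. Since the global minimizer of $H_{N,p}$ on the compact manifold $S^{N-1}(\sqrt N)$ is necessarily a critical point, one has
\begin{equation*}
\{GS^N\le -u\}\subseteq\{\Crt_N((-\infty,-u])\ge 1\}.
\end{equation*}
Markov's inequality combined with Theorem~\ref{t:complexityglobal} then gives, for $u>E_\infty$,
\begin{equation*}
\mathbb{P}(GS^N\le -u)\le \mathbb{E}[\Crt_N((-\infty,-u])]= e^{N(\Theta_p(-u)+o(1))}.
\end{equation*}
For any $u>E_0(p)$, one has $-u<-E_\infty$; in that regime $\Theta_p$ coincides with $\Theta_{0,p}$, and by monotonicity together with the defining equation $\Theta_{0,p}(-E_0)=0$ the common value is strictly negative. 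These probabilities are thus summable in $N$, so Borel--Cantelli yields $GS^N>-u$ eventually almost surely. Letting $u$ decrease to $E_0(p)$ along a countable sequence proves $\liminf_{N\to\infty}GS^N\ge -E_0(p)$ almost surely.

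For the matching upper bound when $p\ge 4$ is even, I would invoke the Parisi formula. Write $F_N(\beta)=\frac{1}{N}\log\int_{S^{N-1}(\sqrt N)}e^{-\beta H_{N,p}(\sigma)}\,d\nu(\sigma)$, where $d\nu$ denotes the uniform probability measure on the sphere. The elementary bound $\int e^{-\beta H_{N,p}}\,d\nu\le e^{-\beta N GS^N}$ gives $GS^N\le -F_N(\beta)/\beta$ for every $\beta>0$. Talagrand's theorem \cite{Tal06}, whose hypotheses are met when $p$ is even, asserts that $F_N(\beta)\to\mathcal{P}_p(\beta)$ in probability, where $\mathcal{P}_p$ is the Parisi functional for the spherical $p$-spin model at inverse temperature $\beta$. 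Passing first to $N\to\infty$ and then to $\beta\to\infty$ yields
\begin{equation*}
\limsup_{N\to\infty}GS^N\le -\lim_{\beta\to\infty}\frac{\mathcal{P}_p(\beta)}{\beta}\quad\text{in probability}.
\end{equation*}

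It remains to verify that $\lim_{\beta\to\infty}\mathcal{P}_p(\beta)/\beta=E_0(p)$. For the pure spherical $p$-spin model with $p\ge 3$, at low temperature the Parisi functional is minimized by a one-step replica symmetry breaking ansatz and therefore reduces to a finite-dimensional variational problem. Sending $\beta\to\infty$ along the standard scaling and optimizing produces a transcendental equation for the left-hand side above. The crux is to show this equation coincides with $\Theta_{0,p}(-E_0)=0$; using the explicit form of $I_1$ in \eqref{I_1} and of $\Theta_{0,p}$ in \eqref{e:thetakp}, both conditions should reduce to the same algebraic relation. This algebraic identification, expected on physical grounds since the ground state coincides with the freezing point predicted by the vanishing of the annealed complexity of local minima, is the principal technical obstacle; once established, it closes the argument by combining with the almost sure lower bound to yield convergence in probability to $-E_0(p)$.
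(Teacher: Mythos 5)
Your overall strategy matches the paper's: a first-moment (Markov/complexity) bound for the lower estimate, and the Parisi formula as proved by Talagrand for the matching upper estimate when $p$ is even. The lower bound as you present it is correct and essentially identical to the paper's, which deduces it from Theorem~\ref{t:belowEk} (itself a Markov-plus-complexity argument); your Borel--Cantelli step even strengthens it to an almost sure statement, which is harmless.

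The upper bound is where your proposal has a genuine gap. The inequality $GS^N\le -F_N(\beta)/\beta$ and the passage $N\to\infty$, $\beta\to\infty$ are fine (the paper handles the needed concentration of $GS^N$ via Borell--TIS, whereas you assert concentration of $F_N(\beta)$; both are standard). But the step you label ``the principal technical obstacle,'' namely that
\[
\lim_{\beta\to\infty}\frac{\mathcal{P}_p(\beta)}{\beta}=E_0(p),
\]
is not a routine algebraic identification, and merely stating that both sides ``should reduce to the same algebraic relation'' does not constitute a proof. In the paper this is precisely the content of Lemma~\ref{gammaequalE}, preceded by the rescaling lemma (which uses Lemma~3 of \cite{PT07} to justify the substitution $q=1-d\beta^{-1}$, $m=c\beta^{-1}$ and the restriction to a compact $[\varepsilon,M]^2$). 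One must then solve the stationarity system \eqref{e:cda}--\eqref{e:cdb}, reduce it to the scalar equation $g_p(a)=0$ in \eqref{e:eqa}, show that the relevant root satisfies $a>p-1$, express the optimal value $\gamma$ as $y+\frac{p-1}{yp}$, and only then verify $\Theta_{0,p}(-\gamma)=0$. None of these steps is obvious a priori; in particular, the selection of the correct branch in \eqref{e:ycond} is essential and would be easy to get wrong. Until that computation is carried out, your argument establishes only that the zero-temperature Parisi value is \emph{some} number, not that it equals $E_0(p)$. In short: the route is right, the upper bound is incomplete.
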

\begin{remark}
  The lower bound  on the Ground State follows from our 
  complexity estimates and holds for all $p\ge 3$. To obtain a matching upper 
  bound we use the Parisi formula and the one step replica symmetry breaking as proved by Talagrand~\cite{Tal06}. The Parisi formula is proven there for every $p$ but the one step replica symmetry breaking is proven only for even $p$'s. It might be noteworthy that if we could go beyond our annealed estimates of the complexity we would be in a position to prove directly the one-step replica symmetry breaking at zero temperature 
  \end{remark}

By Theorem~\ref{t:groundstate}, it is improbable to find a critical value below 
the level $-NE_0(p)$. The next interesting phenomenon is the role of the 
threshold $E_{\infty}(p)$. Namely, it is (even more) improbable to find, above the 
threshold $-NE_{\infty}(p)$, a critical value of the Hamiltonian of a fixed index $k$, 
when $N\to\infty$. Otherwise said, above the threshold $-N E_{\infty}(p)$, all 
critical values of the Hamiltonian must be of diverging index, with 
overwhelming probability. 

\begin{theorem}\label{t:nofiniteindex}
  Let for an integer $k\ge 0$ and $\varepsilon >0$, $B_{N,k}(\varepsilon )$ be the 
  event ``there is a critical value of index $k$ of the Hamiltonian $H_{N,p}$ 
  above the level $-N(E_{\infty}(p)-\varepsilon)$'',  that is
  $B_{N,k}(\varepsilon )=\{\Crt_{N,k}((-E_{\infty}(p)+\varepsilon ,\infty))>0\}$. Then for 
  all $k\ge 0$ and $\varepsilon >0$,
  \begin{equation}
    \limsup_{N\rightarrow\infty}
    \frac{1}{N^2} \log \Pro (B_{N,k}(\varepsilon)) < 0.
  \end{equation}
\end{theorem}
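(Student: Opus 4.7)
The plan is to reduce Theorem~\ref{t:nofiniteindex} to a large deviation estimate for the $(k+1)$-th smallest GOE eigenvalue via the exact Kac--Rice identity and then to the LDP for the empirical spectral measure. First, since $B_{N,k}(\varepsilon)$ is an event whose indicator is dominated by the integer-valued count $\Crt_{N,k}((-E_\infty+\varepsilon,\infty))$, Markov's inequality together with Theorem~\ref{t:exactk} gives
\[
\Pro(B_{N,k}(\varepsilon)) \le 2\sqrt{\tfrac{2}{p}}(p-1)^{N/2}\,\E_{\GOE}^N\Bigl[e^{-N\frac{p-2}{2p}(\lambda_k^N)^2}\,\indi\{\lambda_k^N > -\sqrt 2 + \tilde\varepsilon\}\Bigr],
\]
where $\tilde\varepsilon = \varepsilon\sqrt{p/(2(p-1))} > 0$, using the identity $\sqrt{p/(2(p-1))}\,E_\infty = \sqrt 2$. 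Since $(p-2)/(2p)\ge 0$, the Gaussian factor inside the expectation is bounded by $1$, and the prefactor $(p-1)^{N/2} = e^{O(N)}$ is negligible on the scale $N^2$. Thus it suffices to prove
\[
\limsup_{N\to\infty} \frac{1}{N^2}\log \Pro_{\GOE}^N\bigl(\lambda_k^N > -\sqrt 2 + \tilde\varepsilon\bigr) < 0.
\]

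The event on the right is a macroscopic deviation of the GOE spectrum from its typical shape: $L_N$ concentrates on the semicircle law $\mu_{sc}$ supported on $[-\sqrt 2,\sqrt 2]$, so a positive fraction of eigenvalues typically lies in any subinterval of $(-\sqrt 2, -\sqrt 2 + \tilde\varepsilon)$ of positive $\mu_{sc}$-mass. Concretely, fix a closed subinterval $I \subset (-\sqrt 2, -\sqrt 2 + \tilde\varepsilon)$ with $\alpha := \mu_{sc}(I) > 0$. The event $\{\lambda_k^N > -\sqrt 2 + \tilde\varepsilon\}$ forces at most $k$ eigenvalues below $-\sqrt 2 + \tilde\varepsilon$, hence $L_N(I) \le k/N$. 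For $N$ large enough that $k/N < \alpha/2$, this event is contained in the weakly closed set $F := \{\mu : \mu(I) \le \alpha/2\}$ (closed because $\mu\mapsto \mu(I)$ is upper semi-continuous for $I$ closed). Since $\mu_{sc}(I) = \alpha > \alpha/2$, we have $\mu_{sc}\notin F$, and the good rate function of the LDP for $L_N$ at speed $N^2$ proved in \cite{BG97} (recalled in Appendix~\ref{a:LDPsection}) vanishes uniquely at $\mu_{sc}$, so its infimum over $F$ is strictly positive. The LDP upper bound then yields $\Pro_{\GOE}^N(F) \le e^{-cN^2}$ for some $c = c(\tilde\varepsilon) > 0$ and all $N$ large, completing the reduction.

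The argument is fairly robust and the main obstacle is essentially bookkeeping: one has to track the scaling factor $\sqrt{p/(2(p-1))}$ correctly through Theorem~\ref{t:exactk} to identify the critical value $\sqrt 2$ on the GOE side, and one has to select a closed test set (rather than an open one) so that the LDP upper bound applies directly. The speed-$N$ LDP for $\lambda_k^N$ proved in Appendix~\ref{a:LDPsection} is not used here, because inside the bulk the true decay rate is $N^2$; the empirical measure LDP of \cite{BG97} is precisely what captures this stronger concentration.
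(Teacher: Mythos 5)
Your proposal follows essentially the same route as the paper: Markov's inequality combined with the exact formula of Theorem~\ref{t:exactk} (with the Gaussian factor bounded by $1$ and the $(p-1)^{N/2}$ prefactor absorbed at scale $N^2$), and then the LDP for the empirical spectral measure from \cite{BG97}. The paper states the final estimate $\Pro(\lambda_k^N \ge -\sqrt 2 + \tilde\varepsilon) \le e^{-cN^2}$ without spelling out the reduction to the empirical-measure LDP, and your proposal fills in that step via a test set $F = \{\mu : \mu(I) \le \alpha/2\}$, which is a useful addition.

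There is, however, a small but genuine error in that added step: you take $I$ \emph{closed} and justify closedness of $F$ by saying $\mu \mapsto \mu(I)$ is upper semi-continuous. But upper semi-continuity of a functional makes superlevel sets $\{f \ge a\}$ closed, not sublevel sets $\{f \le a\}$; what you need is \emph{lower} semi-continuity. The map $\mu \mapsto \mu(A)$ is lower semi-continuous in the weak topology precisely when $A$ is \emph{open} (this is the open-set half of the portmanteau theorem: $\liminf_n \mu_n(U) \ge \mu(U)$). So you should fix an \emph{open} subinterval $I \subset (-\sqrt 2, -\sqrt 2 + \tilde\varepsilon)$ with $\mu_{sc}(I) = \alpha > 0$; then $F = \{\mu : \mu(I) \le \alpha/2\}$ is weakly closed, the event $\{\lambda_k^N > -\sqrt 2 + \tilde\varepsilon\}$ still forces $L_N(I) \le k/N \le \alpha/2$ for $N$ large, $\mu_{sc} \notin F$, and the LDP upper bound applies as you intended. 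With that correction the argument is complete and matches the paper's proof.
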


By the last theorem, all critical values of the Hamiltonian of fixed index (non 
  diverging with $N$) must be found in the band $(-NE_0(p), -NE_{\infty}(p))$. We now 
explain the role of the thresholds $E_k(p)$. Namely, it is improbable to find 
critical value of index larger or equal to $k$ below the threshold $-NE_{k}(p)$, 
for any fixed integer $k$. 

\begin{theorem}
  \label{t:belowEk}
  For $k\geq 0$ and  $\varepsilon > 0$, let $A_{N,k}(\varepsilon)$ to be the 
  event ``there is a critical value  of the Hamiltonian $H_{N,p}$ below the 
  level $-N(E_{k}(p)+\varepsilon)$ and with index larger or equal to $k$'', 
  that is 
  $A_{N,k}(\varepsilon )=\{\sum_{i=k}^\infty \Crt_{N,i}(-E_k(p)-\varepsilon )>0\}$. 
  Then for all $k\ge 0$ and $\varepsilon >0$,
  \begin{equation}
    \limsup_{N\rightarrow\infty}
    \frac{1}{N} \log \Pro (A_{N,k}(\varepsilon)) < 0.
  \end{equation}
\end{theorem}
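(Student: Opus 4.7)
The plan is to use Markov's inequality to control $\Pro(A_{N,k}(\varepsilon))$ by the expected number of critical points of index at least $k$ below level $Nu$, where $u := -E_k(p)-\varepsilon$, and then to extract an exponential decay rate via Theorem~\ref{t:exactk} together with the eigenvalue LDP from Appendix~A. Concretely,
\begin{equation*}
  \Pro(A_{N,k}(\varepsilon)) \le \E\Big[\sum_{i=k}^{N-1} \Crt_{N,i}\big((-\infty,u)\big)\Big],
\end{equation*}
and the goal is to show the right-hand side decays exponentially at rate $\Theta_{k,p}(u)$. This rate is strictly negative, since $\Theta_{k,p}(-E_k)=0$, $\Theta_{k,p}$ is strictly increasing on $(-\infty,-E_\infty)$ (differentiate \eqref{e:thetakp}), and $-E_k<-E_\infty$.

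By Theorem~\ref{t:exactk}, writing $\tilde u := u\sqrt{p/(2(p-1))}$ (so $\tilde u<-\sqrt 2$), the expectation above equals
\begin{equation*}
  2\sqrt{\tfrac 2p}\,(p-1)^{N/2}\, \E^N_\GOE\bigg[\sum_{i=k}^{N-1} e^{-N\frac{p-2}{2p}(\lambda^N_i)^2}\indi\{\lambda^N_i\le \tilde u\}\bigg].
\end{equation*}
Theorem~\ref{t:complexityk} gives the correct rate for each fixed $i$, but does not immediately permit summation up to $i=N-1$. The key observation is that, by monotonicity of the spectrum, $\lambda^N_i\le \tilde u$ for some $i\ge k$ forces $\lambda^N_k\le \tilde u$, and on that event each $(\lambda^N_i)^2\ge \tilde u^2$ because $\tilde u<0$. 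This yields the uniform pointwise bound
\begin{equation*}
  \sum_{i=k}^{N-1} e^{-N\frac{p-2}{2p}(\lambda^N_i)^2}\indi\{\lambda^N_i\le \tilde u\} \le N\, e^{-N\frac{p-2}{2p}\tilde u^2}\,\indi\{\lambda^N_k\le \tilde u\}.
\end{equation*}

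Taking GOE-expectations and applying the LDP from Appendix~A (which, via the $M\mapsto -M$ symmetry of the GOE, implies $\limsup_{N\to\infty}\tfrac 1N \log \Pro(\lambda^N_k\le \tilde u) \le -(k+1)I_1(u)$ with $I_1$ as in \eqref{I_1}), and using $\tilde u^2=u^2 p/(2(p-1))$, I obtain
\begin{equation*}
  \limsup_{N\to\infty}\tfrac 1N \log \E\Big[\sum_{i\ge k}\Crt_{N,i}\big((-\infty,u)\big)\Big] \le \tfrac 12\log(p-1)-\tfrac{p-2}{4(p-1)}u^2-(k+1)I_1(u) = \Theta_{k,p}(u),
\end{equation*}
which is strictly negative by the discussion above. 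Combined with the Markov step this finishes the proof. The main obstacle is precisely that Theorem~\ref{t:complexityk} is only asymptotic for each fixed index $i$ and so cannot be summed termwise up to $N-1$; the pointwise bound above, exploiting the sign of $\tilde u$ and the ordering of the spectrum, is what reduces the entire sum to a single application of the LDP for $\lambda^N_k$ and thereby provides the uniform control in $i$ that one needs.
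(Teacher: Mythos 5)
Your proof is correct, and while it shares the same overall strategy as the paper's (Markov's inequality combined with the LDP for the $k$-th smallest eigenvalue, yielding the rate $\Theta_{k,p}(-E_k-\varepsilon)<0$), you handle a point the paper leaves implicit. The paper's proof only displays a bound on $\E[\Crt_{N,k}(-E_k-\varepsilon)]$ via Theorem~\ref{t:complexityk} and then invokes Markov, but the event $A_{N,k}(\varepsilon)$ involves the full sum $\sum_{i\ge k}\Crt_{N,i}$, and Theorem~\ref{t:complexityk} is asymptotic only for each fixed index with an $o(N)$ that is not a priori uniform in $i$; summing $N$ such bounds termwise is not justified as written. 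You correctly identify this issue and resolve it by returning to the exact identity of Theorem~\ref{t:exactk} and using the ordering of the spectrum together with the sign of $\tilde u$ to get the uniform pointwise bound $\sum_{i=k}^{N-1} e^{-N\frac{p-2}{2p}(\lambda_i^N)^2}\indi\{\lambda_i^N\le \tilde u\}\le N\,e^{-N\frac{p-2}{2p}\tilde u^2}\indi\{\lambda_k^N\le \tilde u\}$, which reduces the entire sum to a single application of the LDP for $\lambda_k^N$. This makes the argument cleaner and rigorous where the paper is terse; the rate you obtain, $\tfrac12\log(p-1)-\tfrac{p-2}{4(p-1)}u^2-(k+1)I_1(u)=\Theta_{k,p}(u)$ at $u=-E_k-\varepsilon$, and the strict negativity from the strict monotonicity of $\Theta_{k,p}$ on $(-\infty,-E_\infty)$ with $\Theta_{k,p}(-E_k)=0$, are both exactly as in the paper.
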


Theorem \ref{t:belowEk} describes an interesting layered structure for the 
lowest critical values of the Hamiltonian $H_{N,p}$. It says that the 
lowest critical  values above the ground state energy (asymptotically 
  $-NE_0(p)$) are (with an overwhelming probability again) only local 
minima, this being true up to  the value $-NE_1(p)$, and  that in a layer 
above, $(-NE_1(p), -NE_2(p))$, one finds only critical values with index 0 
(local minima) or saddle point with index 1, and above this layer one 
finds only critical values with index 0,1 or 2, etc. 
This  picture was already predicted by 
physicists for minima \cite{CS95}, \cite{CLR03} and for 
critical points of finite indices \cite{KL96}. In particular, this says 
that the energy barrier to cross when starting from the ground state in 
order to reach another local minima diverges with $N$, since it is bounded 
below by the energy difference between an index-one saddle point and the 
ground state, i.e.~by $N(E_0(p) -E_1(p))$.

\begin{remark}
  Even though it does not follow immediately from Theorem \ref{t:belowEk} 
  and from our results on complexity, it is tempting to conjecture that 
  the minimum possible energy of a critical point of index $k$, normalized 
  by $N$, should converge to $-E_{k}(p)$ (For $k=0$ this is the statement 
    of Theorem \ref{t:groundstate}), while likewise the maximum energy of a critical point 
  of index $k$, once normalized by $N$, should converge to $-E_{\infty}$. It is 
  also tempting to conjecture that the main contribution to the number of 
  critical points of a finite index $k$ is given by those whose energy is 
  asymptotically $-NE_{\infty}$. That is, the number of critical points of any finite 
  index with energy strictly below $-NE_{\infty}$ should be negligible with 
  respect to those with energy near $-NE_{\infty}$ (with probability going to one, 
    as $N$ tends to infinity). However, near any energy value in 
  $E \in (-E_k(p),-E_{\infty})$ there are still an exponentially many 
  critical values of index $k$. We cannot  reach those statements at this point because 
  our complexity results concern only the first moment of 
  $\Crt_{N,k}(u)$. We would need to control the concentration of 
  these random variables.
\end{remark}

In Section~\ref{s:sharp}, we show that the precision of Theorem
\ref{t:complexityglobal} can
be improved and we derive, using asymptotic properties of orthogonal
polynomials, the following sharp asymptotics of $\E (\Crt_{N}(u))$. 
\begin{theorem}
  \label{t:sharp}
  For $p\ge 3$, the following holds as $N\to\infty$:
  \begin{itemize}
    \item[(a)] For $u<-E_{\infty}$
    \begin{equation}
      \label{e:tsharpexp}
      \E \Crt_{N}(u) =   
      \frac { h(v)}{( 2p\pi )^{1/2} }
      \frac{e^{I_1(v,\frac{1}{\sqrt{2}}) (v)- \frac v2I_1(v,\frac{1}{\sqrt{2}})' (v)}}
      {-\phi'(v) + I_1(v,\frac{1}{\sqrt{2}})'(v)}\,
      N^{-1/2}e^{N\Theta_p(u)}(1 + o(1)),
    \end{equation}
    where $v=-u\sqrt{\frac p{2(p-1)}}$ and  the functions $h$, $\phi$ and $I_1(v,\frac{1}{\sqrt{2}})$ are 
    given in \eqref{e:CvF}, \eqref{e:psis}.

    \item[(b)] For $u=-E_{\infty}$
    \begin{equation}
      \label{e:tsharpairy}
      \E \Crt_{N}( -E_{\infty}) = 
      \frac{2\Ai(0) \sqrt{2 p}}{3(p-2)} N^{-1/3} 
      e^{N\Theta_p(-E_{\infty})}  (1 + o(1)).
    \end{equation}

    \item[(c)] For $u\in (-E_{\infty},0)$
    \begin{equation}
      \label{e:tsharpbulk}
      \E \Crt_{N}( u) = 
      \frac{2\sqrt{2p(E_{\infty}^2-u^2)}}{(2-p)\pi u} e^{N\Theta_p(u)}
      (1+o(1)).
    \end{equation}

    \item[(d)] For $u>0$
    \begin{equation}
      \label{e:tsharpneg}
      \E \Crt_{N}(u) = 2 \E \Crt_{N}(0)(1+o(1))
      = \frac {4\sqrt 2}{\sqrt{\pi(p-2)}}N^{1/2} e^{ N \Theta_p(0)}  
       (1 + o(1)).
    \end{equation}
  \end{itemize}
\end{theorem}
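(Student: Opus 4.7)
My plan is to combine the exact identity of Theorem~\ref{t:exactglobal} with sharp asymptotics of the GOE one-point function $\rho_N$, and then perform a Laplace-type analysis of the resulting integral. Setting $v=-u\sqrt{p/(2(p-1))}$ and $\alpha=(p-2)/(2p)$, the identity \eqref{e:final} reduces the theorem to computing
\begin{equation*}
  \mathcal I_N(u)=\int_{-\infty}^{-v}e^{-N\alpha x^2}\rho_N(x)\,\d x
\end{equation*}
to leading order and multiplying by the prefactor $2N\sqrt{2/p}(p-1)^{N/2}$. The function $\rho_N$ admits the classical expression $\rho_N(x)=\frac 1N\sum_{k=0}^{N-1}\psi_k(x)^2$ plus a GOE-specific correction involving $\psi_{N-1}$ and an integral of $\psi_{N-2}$, where $\psi_k$ are the normalized Hermite functions. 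The Plancherel-Rotach asymptotics divide the real line into three regions: oscillatory on the bulk $|x|<\sqrt 2$, Airy-type on the edge window $|x|=\sqrt 2+O(N^{-2/3})$, and exponentially decaying on the tail $|x|>\sqrt 2$. The four cases of the theorem correspond to the position of the endpoint $-v$ with respect to $-\sqrt 2$ and to $0$.

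Cases (c) and (d) require only uniform convergence of $\rho_N$ to the semicircle density $\rho_{sc}(x)=\frac 1\pi\sqrt{2-x^2}$ on compacts of $(-\sqrt 2,\sqrt 2)$. For (c), the endpoint $-v$ lies in $(-\sqrt 2,0)$, the Gaussian weight is strictly increasing on $(-\infty,-v]$, and a boundary Laplace expansion gives $\mathcal I_N(u)\sim \rho_{sc}(v)e^{-N\alpha v^2}/(2N\alpha v)$; elementary algebra after substituting $v$ and combining with the prefactor recovers \eqref{e:tsharpbulk}. For (d), the peak of the Gaussian at $x=0$ lies strictly inside the integration interval, and a centred Laplace expansion gives $\mathcal I_N(u)\sim\rho_{sc}(0)\sqrt{\pi/(N\alpha)}$, producing the $N^{1/2}$ factor of \eqref{e:tsharpneg}. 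The identity $\E\Crt_N(u)\sim 2\E\Crt_N(0)$ follows because for $u>0$ the integration window captures the whole Gaussian peak at $0$, while at $u=0$ only its left half is captured, by evenness of $\rho_N$.

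Cases (a) and (b) need sharp Plancherel-Rotach outside the bulk. For (b), the endpoint $-v=-\sqrt 2$ lies exactly at the edge; after rescaling $x=-\sqrt 2-N^{-2/3}t$ the Gaussian factor contracts the effective window to $t$ of order $N^{-1/3}$, which is finer than the edge scale $N^{-2/3}$. On this smaller scale the universal GOE edge asymptotics give $\rho_N$ essentially equal to its edge value proportional to $\Ai(0)$, and Laplace integration yields $\mathcal I_N(-E_\infty)\sim C\,\Ai(0)\,N^{-4/3}e^{-2N\alpha}$ with an explicit constant $C$; multiplying by the prefactor produces \eqref{e:tsharpairy}. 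For (a), the endpoint $-v<-\sqrt 2$ is in the large-deviation tail, where Plancherel-Rotach gives $\rho_N(x)$ as an explicit algebraic prefactor times $N^{-1/2}e^{-N\Psi(|x|)}$ with $\Psi$ the Hermite exponent, which differs from $I_1(\cdot,1/\sqrt 2)$ of \cite{BDG01} by the function $\phi$ of \eqref{e:psis}. The integrand's total exponent is monotone on $(-\infty,-v]$ with maximum at the boundary, so a boundary Laplace expansion produces the denominator $-\phi'(v)+I_1'(v,1/\sqrt 2)$, while tracking the full Plancherel-Rotach prefactor together with the $\sqrt{2/p}$ in \eqref{e:final} gives the numerator $h(v)e^{I_1(v,1/\sqrt 2)-(v/2)I_1'(v,1/\sqrt 2)}/\sqrt{2p\pi}$ of \eqref{e:tsharpexp}.

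The main obstacle is the sharpness required in (a) and (b): uniform Plancherel-Rotach asymptotics with fully explicit prefactors must be established outside the bulk and at the edge, including careful treatment of the GOE-specific correction term to $\rho_N$. A further delicate bookkeeping step is matching the Plancherel-Rotach functions (Hermite exponent $\phi$, algebraic prefactor) with the rate function $I_1(\cdot,1/\sqrt 2)$ and its derivative at $v$, so that the final answer reduces to the clean form stated in \eqref{e:tsharpexp} and \eqref{e:tsharpairy}.
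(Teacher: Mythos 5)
Your plan follows the paper's route: start from the exact identity of Theorem~\ref{t:exactglobal}, expand the GOE one-point function $\rho_N$ in Hermite functions via \eqref{e:onepointf}--\eqref{CDarboux}, apply Plancherel-Rotach asymptotics in the three regimes, and finish with Laplace. Parts (c) and (d) are handled exactly as in the paper.

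For parts (a) and (b), however, the plan passes over the two structural facts on which the computation of the constants actually hinges, and these are not mere bookkeeping. Writing $\rho_N = N^{-1/2}(S_N+\alpha_N)$ and applying Christoffel--Darboux gives the four pieces $I_1,\dots,I_4$ of \eqref{e:sumints}: $I_2,I_3$ come from the products $\phi_N^2$ and $\phi_{N-1}\phi_{N+1}$, while $I_1$ comes from $\alpha_N$ and $I_4$ from the GOE cross term $\phi_{N-1}\,J_N$. The first key observation (case (a)) is that $I_2,I_3 = O(N^c e^{-N(-\phi(v)+2I_1(v,\frac{1}{\sqrt{2}})(v))})$ because a \emph{product} of two Hermite functions decays at \emph{twice} the exponential rate in the tail, while $I_1$ (for $N$ odd) and $I_4$ (for $N$ even) contain only a single Hermite function and so decay at rate $I_1(v,\frac{1}{\sqrt{2}})(v)$ --- so the GOE-specific corrections dominate the Christoffel--Darboux sum, not the other way around. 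You quote the resulting single-rate decay of $\rho_N$ as given, but the naive GUE-style estimate $\rho_N\approx N^{-1}\sum\phi_i^2$ would yield the wrong rate and $N$-power; identifying the dominant term is the heart of case (a). The second key observation (case (b)) is that at the edge $I_2$ and $I_3$ are individually of the right order but \emph{cancel} to leading order ($I_3=-I_2(1+O(N^{-1}))$), so the answer again comes from $I_1+I_4$; moreover the even-$N$ and odd-$N$ contributions of $I_1$ and $I_4$ differ (the coefficients $2/3$ and $-1/3$ from $\int_0^\infty\!\mathrm{Ai}=1/3$ in Lemma~\ref{l:phiints}(d)) but their sum does not. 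Without these two facts one cannot arrive at the explicit constants in \eqref{e:tsharpexp} and \eqref{e:tsharpairy}. Finally, a small correction: the Plancherel--Rotach exponent $I_1(v,\frac{1}{\sqrt{2}})(x)=\int_{\sqrt 2}^x\sqrt{y^2-2}\,\d y$ \emph{equals} the rescaled rate function $I_1(x;1/\sqrt 2)$ of \cite{BDG01}; the $\phi$ of \eqref{e:CvF} is the Gaussian weight from the spin-glass identity, not a discrepancy between the two, so your remark that they ``differ by $\phi$'' is off.
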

Since $\Theta_k(u)<\Theta_0(u)$  for all $k>0$ and $u<-E_{\infty}$,
we obtain as an easy consequence of Theorem~\ref{t:complexityk} 
the following sharp asymptotics for the mean number of minima.
\begin{corollary}
  \label{c:sharp}
    For $u<-E_{\infty}$, 
    
    \begin{equation}
     \E \Crt_{N,0}(u) =   
      \frac { h(v)}{( 2p\pi )^{1/2} }
      \frac{e^{I_1(v,\frac{1}{\sqrt{2}}) (v)- \frac v2I_1(v,\frac{1}{\sqrt{2}})' (v)}}
      {\phi'(v) + I_1(v,\frac{1}{\sqrt{2}})'(v)}\,
      N^{-1/2}e^{N\Theta_p(u)}(1 + o(1)),
    \end{equation}
    
    where $v=-u\sqrt{\frac p{2(p-1)}}$ and  the functions $h$, $\phi$ and $I_1(v,\frac{1}{\sqrt{2}})$ are 
    given in \eqref{e:CvF}, \eqref{e:psis}.
     
\end{corollary}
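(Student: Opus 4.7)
The plan is to deduce the sharp asymptotics for $\E\Crt_{N,0}(u)$ from the corresponding sharp asymptotics for the total count $\E\Crt_N(u)$ provided by Theorem~\ref{t:sharp}(a), by showing that in the regime $u<-E_{\infty}$ the local minima dominate all other critical values not just exponentially but up to the sharp prefactor. Concretely, I would aim to prove
\begin{equation}
\E\Crt_{N,0}(u)=\E\Crt_N(u)\bigl(1+o(1)\bigr),
\end{equation}
so that the claimed formula follows at once from Theorem~\ref{t:sharp}(a).

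The decomposition $\E\Crt_N(u)=\E\Crt_{N,0}(u)+\sum_{k=1}^{N-1}\E\Crt_{N,k}(u)$ reduces the problem to showing that the sum on the right is exponentially smaller than the leading order $N^{-1/2}e^{N\Theta_p(u)}$ behavior of the $k=0$ term. By \eqref{e:thetakp}, on $(-\infty,-E_{\infty}]$ we have $\Theta_{k,p}(u)=\Theta_{0,p}(u)-kI_1(u)$ with $I_1(u)>0$ strictly, so each fixed-$k$ term is, by Theorem~\ref{t:complexityk}, exponentially smaller than the $k=0$ term. To upgrade this per-$k$ statement to a bound on the full sum, I would return to the exact formula of Theorem~\ref{t:exactk} and estimate
\begin{equation}
\E\Crt_{N,k}(u)\leq 2\sqrt{\tfrac{2}{p}}(p-1)^{N/2}e^{-N\frac{p-2}{2p}w^2}\Pro\bigl[\lambda^N_k\leq w\bigr],\qquad w:=\sqrt{\tfrac{p}{2(p-1)}}\,u,
\end{equation}
using the bound $(\lambda^N_k)^2\geq w^2$ on the integration event (valid for $p\geq 3$ since $w<0$). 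Plugging in the LDP upper bound $\Pro[\lambda^N_k\leq w]\leq e^{-N(k+1)I_1(u)+o(N)}$ for the $(k+1)$-th smallest GOE eigenvalue from Appendix~\ref{a:LDPsection}, and summing the resulting geometric series in $k$, yields $\sum_{k\geq 1}\E\Crt_{N,k}(u)\leq e^{-cN}\E\Crt_{N,0}(u)$ for some $c=c(u)>0$.

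The main technical obstacle is making the LDP upper bound sufficiently uniform in $k$ up to indices of order $N$: for $k$ of order one it follows directly from the arguments of Appendix~\ref{a:LDPsection}, while for $k$ comparable to $N$ one must either extend that analysis, or exploit the fact that the $k$-dependent exponential gain $e^{-NkI_1(u)}$ overwhelms any sub-exponential slack, so that even a rather crude version of the LDP bound closes the geometric sum. Once the dominance $\E\Crt_{N,0}(u)=\E\Crt_N(u)(1+o(1))$ is established, Theorem~\ref{t:sharp}(a) completes the proof.
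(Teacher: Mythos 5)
Your overall route is the same as the paper's: decompose $\E\Crt_N(u)=\E\Crt_{N,0}(u)+\sum_{k\ge1}\E\Crt_{N,k}(u)$, show the tail sum is exponentially negligible on the scale of $e^{N\Theta_p(u)}$, and then invoke Theorem~\ref{t:sharp}(a). The argument is correct in spirit, but you are working harder than you need to, and the ``main technical obstacle'' you identify is not actually an obstacle.

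There is no need to sum a geometric series in $k$, and hence no need for an LDP upper bound that is uniform in $k$ up to order $N$. For every $k\ge1$ one has the deterministic inclusion $\{\lambda^N_k\le w\}\subset\{\lambda^N_1\le w\}$ (ordered eigenvalues), and on the integration event $(\lambda^N_k)^2\ge w^2$, so
\begin{equation*}
\E\Crt_{N,k}(u)\le 2\sqrt{\tfrac2p}\,(p-1)^{N/2}\,e^{-N\frac{p-2}{2p}w^2}\,\Pro[\lambda^N_1\le w]
\qquad\text{for all }k\ge1.
\end{equation*}
Summing this over $1\le k\le N-1$ costs only a factor $N$, and now you only need the LDP for the \emph{second} smallest eigenvalue ($\lambda^N_1$), which is a single fixed instance of Theorem~\ref{t:LDP}. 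This gives $\sum_{k\ge1}\E\Crt_{N,k}(u)\le N\,e^{N(\Theta_{1,p}(u)+\varepsilon)}$, and since $\Theta_{0,p}(u)-\Theta_{1,p}(u)=I_1(u)>0$ for $u<-E_\infty$, the polynomial factor $N$ and all subexponential slack are absorbed by the strict exponential gap. This is in fact exactly how the paper proceeds: it bounds the tail by $N\E\Crt_{N,1}(u)$ and applies Theorem~\ref{t:complexityk} for $k=1$ only, then concludes from $\Theta_{0,p}(u)>\Theta_{1,p}(u)$. So your plan is sound, and the fix for the perceived gap is simply to pass from $\lambda^N_k$ to $\lambda^N_1$ before invoking any large deviation estimate.
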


\section{Proof of the central identity}
\label{s:exact}

In this section we prove Theorems \ref{t:exactk} and \ref{t:exactglobal}.
For the proofs, we find it more convenient to work with processes of 
variance one on the unit sphere $S^{N-1}\subset \mathbb R^N$ rather than 
to work with $H_{N,p}$. Hence, for $\boldsymbol\sigma\in S^{N-1}$ we define,
\begin{equation}
  \label{definitionf}
  f_{N,p}(\boldsymbol\sigma) = \frac{1}{\sqrt{N}} H_{N,p}(\sqrt{N}\boldsymbol\sigma).
\end{equation}
We will regularly omit the subscripts $N$ and $p$ to save on notations, 
$f=f_{N,p}$. The function~$f$ is again centered Gaussian process whose
covariance satisfies
\begin{equation}
  \label{eq:overlap}
  \E(f(\boldsymbol\sigma) f(\boldsymbol\sigma')) 
  = \Big(\sum_{i=1}^N \sigma_i \sigma'_i\Big)^p 
  = R(\boldsymbol\sigma, \boldsymbol\sigma')^p,\qquad
  \boldsymbol \sigma , \boldsymbol \sigma '\in S^{N-1}.
\end{equation}

To estimate the mean number of critical points in a certain level we will 
use the \textit{Kac-Rice} formula as it appears in the recent book of Adler 
and Taylor \cite{AT07} which we now formulate as a lemma. We use $\<x,y\>$ to 
denote the usual Euclidean scalar product, as well as the scalar product on any 
tangent space $T_{\boldsymbol\sigma}S^{N-1}$. Let $\nabla^2 f$ be the covariant 
Hessian of $f$ on $S^{N-1}$ defined, e.g., by 
$\nabla^2 f(X,Y)=XYf - \nabla_X Y f$. Here $\nabla_X Y$ is the usual Riemann 
connection and $X,Y\in T S^{N-1}$ are tangent vectors. On $S^{N-1}$ we fix an 
arbitrary orthonormal frame field $(E_i)_{1\le i <N}$, that is a set of 
$N-1$ vector fields $E_i$ on $S^{N-1}$ such that $\{E_i(\boldsymbol\sigma)\}$ 
is an orthonormal basis of $T_{\boldsymbol\sigma} S^{N-1}$. We write 
$\phi_{\boldsymbol\sigma}$ for the density of the gradient vector 
$(E_i f(\boldsymbol \sigma))_{1\le i<N}$ and 
$\det  \nabla^2f(\boldsymbol\sigma)$ for the determinant of the matrix 
$(\nabla^2 f(E_i,E_j)(\boldsymbol\sigma))_{1\le i,j<N}$.

\begin{lemma}
  \label{l:AT}
  Let $f$ be a centered Gaussian field on $S^{N-1}$ and let 
  $\mathcal A=(U_\alpha ,\Psi_\alpha )_{\alpha \in I}$ be a finite atlas 
  on $S^{N-1}$. Set 
  $f^\alpha = f \circ \Psi_\alpha ^{-1}: \Psi_\alpha  (U_\alpha )\subset \mathbb R^{N-1}\to \mathbb R$ 
  and define $f^\alpha_i=\partial f^\alpha /\partial x_i$, 
  $f^\alpha_{ij}=\partial^2 f^\alpha /\partial x_i\partial x_j$. Assume 
  that for all $\alpha \in I$ and all $x,y\in \Psi_\alpha  (U_\alpha )$ the joint distribution of 
  $(f^\alpha_i(x),f^\alpha_{ij}(x))_{1\le i \le j< N}$ is 
  non-degenerate, and
  \begin{equation}
    \label{e:logcond}
    \max_{i,j}
    \big|\Var(f^\alpha_{ij}(x))+\Var(f^\alpha_{ij}(y))
    -2\Cov(f^\alpha _{ij}(x),f^\alpha_{ij}(y))\big|
    \le K_\alpha  |\ln |x-y||^{-1-\beta }
  \end{equation}
  for some $\beta >0$ and $K_\alpha >0$. For a Borel set
  $B\subset \mathbb R$, let
  \begin{equation}
    \Crt_{N,k}^f(B) 
    = \sum_{\boldsymbol\sigma: \nabla f(\boldsymbol\sigma) = 0 } 
    \indi\{i(\nabla^2 f(\boldsymbol \sigma ))=k, f(\boldsymbol\sigma) \in B \}.
  \end{equation} 
  Then, 
  using $\d \boldsymbol\sigma$ to denote the usual surface measure on
  $S^{N-1}$, 
  \begin{align}
    \label{e:metak}
    \E \Crt_{N,k}^f(B)&=
    \int_{S^{N-1}}
    \E \big[ | \det \nabla^2f(\boldsymbol\sigma) |
      \indi\{f(\boldsymbol\sigma) \in B,i(\nabla^2 f(\boldsymbol \sigma ))=k\}\, \big|\,
      \nabla f(\boldsymbol\sigma) = 0 \big]
    \phi_{\boldsymbol\sigma}(0) \d \boldsymbol\sigma.
  \end{align}
\end{lemma}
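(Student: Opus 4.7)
The plan is to deduce this lemma directly from the Kac-Rice metatheorem for Gaussian fields on manifolds, as stated in Theorem 12.1.1 of Adler--Taylor \cite{AT07}. That theorem provides exactly the right-hand side of \eqref{e:metak} once two hypotheses are verified: (i) non-degeneracy of the joint law of $(f^\alpha_i(x), f^\alpha_{ij}(x))_{i\le j<N}$ at every point and in every chart, and (ii) a logarithmic modulus-of-continuity bound on the second derivatives of $f$ in every chart. Both are imposed in the statement, so there is essentially nothing left to prove beyond identifying the resulting expression with \eqref{e:metak}.

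First I would fix the atlas $\mathcal A$ together with a smooth partition of unity $(\chi_\alpha)_{\alpha\in I}$ subordinate to it, and apply the flat Kac-Rice formula in each chart $\Psi_\alpha(U_\alpha)\subset \mathbb R^{N-1}$ to the pulled-back Gaussian field $f^\alpha$. This produces an integral against Lebesgue measure on the chart of the conditional expectation of $|\det (f^\alpha_{ij}(x))|$ given $\nabla f^\alpha(x)=0$, weighted by the density of $\nabla f^\alpha(x)$ evaluated at $0$, restricted to the events $\{f^\alpha(x)\in B\}$ and $\{i(\nabla^2 f^\alpha(x))=k\}$. The hypothesis \eqref{e:logcond} provides the sample path regularity needed to ensure that critical points are almost surely isolated and Morse, so that counting them and assigning them an index is unambiguous. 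Summing the $\chi_\alpha$-weighted contributions will then assemble into a single coordinate-free integral over $S^{N-1}$.

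The only substantive task, and the one I expect to be the sole technical point, is checking that the integrand is intrinsic, so that the sum over charts collapses to \eqref{e:metak}. At a critical point of $f$ the covariant Hessian $\nabla^2 f$ coincides with the ordinary Hessian of $f^\alpha$ up to an orthogonal conjugation implementing the change of basis between the coordinate frame $(\partial_i)$ and the chosen orthonormal frame $(E_i)$; consequently $|\det \nabla^2 f(\boldsymbol\sigma)|$ and the index $i(\nabla^2 f(\boldsymbol\sigma))$ are chart-independent. The Jacobian factor $\sqrt{\det g^\alpha}$ relating $dx$ to the surface measure $d\boldsymbol\sigma$ is precisely what is needed to convert the flat gradient density at $0$ into the intrinsic density $\phi_{\boldsymbol\sigma}(0)$ computed in the frame $(E_i)$. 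Once this bookkeeping is carried out, the chart-by-chart Kac-Rice identities glue via the partition of unity into \eqref{e:metak}, and the lemma follows.
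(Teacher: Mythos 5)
Your proposal is correct and takes essentially the same route as the paper: both invoke the Kac-Rice machinery of Adler--Taylor for Gaussian fields on manifolds, verify the non-degeneracy and logarithmic modulus-of-continuity hypotheses (which in the paper are attributed to Corollaries~11.3.2 and~11.3.5 of \cite{AT07}), and then identify the resulting integrand as intrinsic. The paper phrases this as ``the same procedure as Theorem~12.4.1 of \cite{AT07}'' with the $(-1)^k$ weight dropped (since one counts index-$k$ critical points rather than the Euler characteristic), while you invoke the metatheorem~12.1.1 directly; these are the same argument packaged slightly differently.
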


\begin{proof}
  Assumptions of the lemma, which are taken from Corollaries~11.3.2 and 
  11.3.5 of \cite{AT07}, assure that $f$ is a.s.~a Morse function and its 
  gradient and Hessian exist in $L^2$ sense.   The lemma can be then 
  proved using the same procedure as Theorem~12.4.1 of \cite{AT07}.  Our 
  formula~\eqref{e:metak} is analogous to the display just following 
  formula (12.4.4) of \cite{AT07}, modulo the term $(-1)^k$ which is 
  missing in our settings since we are interested in the number of 
  critical points of $f$ in $B$  and not in the Euler characteristic of
  the excursion set.
\end{proof}

An application of Lemma~\ref{l:AT} is made possible due to the following
lemma which describes the joint law of Gaussian vector $(f(\boldsymbol
    \sigma), \nabla f(\boldsymbol \sigma), \nabla^2
  f(\boldsymbol\sigma)))$. A similar computation is also present in section 8.2 of \cite{AW09}.

\begin{lemma}
  \label{l:conditioning}
  (a) Let $(f_{i}(\boldsymbol \sigma))_{1\leq i < N} $ be the gradient and  
  $(f_{ij}(\boldsymbol \sigma ))_{1\le i,j<N}$ the Hessian matrix at 
  $\boldsymbol \sigma \in S^{N-1} $, that is 
  $f_{i}= E_i f(\boldsymbol \sigma ),f_{ij} =\nabla^2 f(E_i,E_j)(\boldsymbol \sigma )$. 
  Then, for all $1\le i,j,k< N$, $f(\boldsymbol \sigma )$, 
  $f_i(\boldsymbol \sigma )$, $f_{jk}(\boldsymbol \sigma )$ are centered 
  Gaussian random variables whose joint distribution is determined by
 \begin{equation}
    \label{e:covariancesa}
    \begin{aligned}
      &\mathbb E[f(\boldsymbol \sigma)^2]=1,\\
      &\mathbb E[f(\boldsymbol \sigma) f_{ij}(\boldsymbol \sigma)]=-p \delta_{ij},\\
    \end{aligned}
    \qquad
    \begin{aligned}
      &\mathbb E[f(\boldsymbol \sigma)f_i(\boldsymbol \sigma)]=
      \mathbb E[f_i(\boldsymbol \sigma)f_{jk}(\boldsymbol \sigma)]=0,\\
      &\mathbb E[f_i(\boldsymbol \sigma)f_j(\boldsymbol \sigma)]=p \delta_{ij},\\
    \end{aligned}
  \end{equation}
  and
  \begin{equation}
    \label{e:covariancesb}
    \mathbb E[f_{ij}(\boldsymbol \sigma)f_{kl}(\boldsymbol \sigma)]=
    p(p-1)(\delta_{ik}\delta_{jl}+\delta_{il}\delta_{jk})+
    p^2 \delta_{ij}\delta_{kl}.
  \end{equation}

  (b) Under the 
  conditional distribution $\mathbb P[\cdot | f(\boldsymbol \sigma)=x]$,
  $x\in \mathbb R$, the random variables $ f_{ij}(\boldsymbol \sigma )$, 
  $1\le i,j< N$, are independent Gaussian variables satisfying
  \begin{equation}
    \begin{split}
      \mathbb E[f_{ij}(\boldsymbol \sigma )]&=-xp \delta_{ij},\\
       \E \big[ f_{ij}(\boldsymbol  \sigma )^2\big]&=(1+\delta_{ij})p(p-1).
    \end{split}
  \end{equation}
  Alternatively,
  the random matrix $(f_{ij}(\boldsymbol \sigma ))$ has the same distribution as 
  \begin{equation}
    M^{N-1}\sqrt {2(N-1)p(p-1)}-xp I,
  \end{equation}
  where $M^{N-1}$ is
  $(N-1)\times(N-1)$ GOE matrix given by \eqref{e:Ms} and $ I$
  is the identity matrix.
\end{lemma}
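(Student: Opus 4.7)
For (a), the plan is to compute all the covariances directly from the kernel $\E[f(\boldsymbol\sigma)f(\boldsymbol\sigma')]=\langle\boldsymbol\sigma,\boldsymbol\sigma'\rangle^p$. It is convenient to extend $f$ from $S^{N-1}$ to the $p$-homogeneous polynomial on $\mathbb R^N$ given by the same random series, so that derivatives can first be computed in the ambient sense and only then converted to covariant ones. If I write $\tilde f_{ij}(\boldsymbol\sigma)$ for the ambient Hessian of $f$ applied to the tangent vectors $E_i(\boldsymbol\sigma),E_j(\boldsymbol\sigma)$, the Gauss formula for $S^{N-1}\subset\mathbb R^N$ combined with Euler's identity $\boldsymbol\sigma\cdot\nabla f=pf$ yields the key bookkeeping substitution
\[
f_{ij}(\boldsymbol\sigma) = \tilde f_{ij}(\boldsymbol\sigma) - p\delta_{ij}\,f(\boldsymbol\sigma),
\]
which cleanly separates the covariant Hessian from its ambient counterpart.

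Every covariance in (a) then reduces to differentiating $\langle\boldsymbol\sigma,\boldsymbol\sigma'\rangle^p$ separately in each argument along the prescribed tangent directions and setting $\boldsymbol\sigma'=\boldsymbol\sigma$ at the end. Each such differentiation brings down either a factor $\langle E_k,\boldsymbol\sigma'\rangle$ or $\langle\boldsymbol\sigma,E_k\rangle$, which vanishes at the diagonal by tangency $E_k\perp\boldsymbol\sigma$, or an inner product of two $E_k$'s, which survives. Keeping only the surviving Kronecker terms I obtain at once $\E[f_i f_j]=p\delta_{ij}$, $\E[f\tilde f_{ij}]=0$, $\E[f_i\tilde f_{jk}]=0$, and $\E[\tilde f_{ij}\tilde f_{kl}]=p(p-1)(\delta_{ik}\delta_{jl}+\delta_{il}\delta_{jk})$. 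Applying the Euler correction and $\E[f^2]=1$ then reproduces \eqref{e:covariancesa}--\eqref{e:covariancesb}; in particular, the term $p^2\delta_{ij}\delta_{kl}$ in the Hessian covariance arises as the cross-product of two $-p\delta_{ij}f$ corrections.

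For (b), the joint law of $(f,\{f_{ij}\})$ is centered Gaussian with covariances from (a), so a single application of the Gaussian regression formula gives $\E[f_{ij}\mid f=x]=\Cov(f_{ij},f)\,x=-xp\delta_{ij}$ and
\[
\Cov(f_{ij},f_{kl}\mid f) = p(p-1)(\delta_{ik}\delta_{jl}+\delta_{il}\delta_{jk}),
\]
where the $p^2\delta_{ij}\delta_{kl}$ term cancels exactly against $\Cov(f_{ij},f)\Cov(f_{kl},f)/\Var(f)=p^2\delta_{ij}\delta_{kl}$. This conditional covariance is precisely that of $\sqrt{2(N-1)p(p-1)}\,M^{N-1}$ with $M^{N-1}$ the $(N-1)\times(N-1)$ GOE normalized by \eqref{e:Ms}: diagonal entries are independent with variance $2p(p-1)$, off-diagonal entries are independent (modulo symmetry) with variance $p(p-1)$, and distinct unordered index pairs $\{i,j\}\neq\{k,l\}$ are uncorrelated. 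Adding back the mean $-xp\,I$ then yields the claimed distributional identity.

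The only real obstacle is the careful separation of the covariant and ambient Hessians on $S^{N-1}$; once Euler's identity collapses the Gauss correction into the simple shift $-p\delta_{ij}f$, the rest is routine differentiation of a polynomial covariance plus one invocation of Gaussian conditioning.
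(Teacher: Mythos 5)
Your argument is correct, and it takes a genuinely different route from the paper. The paper works in a concrete graph chart $\Psi(x_1,\dots,x_N)=(x_1,\dots,x_{N-1})$ near the north pole $\boldsymbol n$, writes the covariance $C(x,y)$ of the pulled-back process $\bar f=f\circ\Psi^{-1}$ explicitly, and exploits that the Christoffel symbols vanish at $x=0$, so the covariant Hessian there coincides with the ordinary Euclidean Hessian of $\bar f$; the covariances are then obtained by brute-force differentiation of $C(x,y)$ at the origin. You instead extend $f$ to the natural $p$-homogeneous polynomial $\tilde f$ on $\mathbb R^N$, use the Gauss formula for the Hessian of a restriction together with Euler's identity $\boldsymbol\sigma\cdot\nabla\tilde f=p\tilde f$ to get the coordinate-free bookkeeping relation $f_{ij}=\tilde f_{ij}-p\delta_{ij}f$, and then differentiate the simpler ambient kernel $\langle\boldsymbol\sigma,\boldsymbol\sigma'\rangle^p$ along tangent directions, killing all $\boldsymbol\sigma$-proportional terms by tangency $E_k\perp\boldsymbol\sigma$. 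The amount of computation is comparable, but your approach is chart-free and makes transparent exactly where the $p^2\delta_{ij}\delta_{kl}$ term in \eqref{e:covariancesb} comes from (the cross-term of two Euler corrections) and why it cancels under conditioning on $f$; the paper's approach is more elementary in that it needs no Riemannian machinery beyond the vanishing of Christoffel symbols at a point. Part (b) in both cases is the same one-line Gaussian regression. One small point worth making explicit in a final write-up: $\tilde f_{ij}(\boldsymbol\sigma)=E_i^\alpha E_j^\beta\,\partial_\alpha\partial_\beta\tilde f(\boldsymbol\sigma)$ is the ambient Hessian \emph{tensor} applied to the tangent vectors, so no extension of the frame field off the sphere is required.
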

\begin{proof}
  Without loss of generality we can suppose that $\boldsymbol \sigma $ is the north
  pole of the sphere $\boldsymbol n=(0,\dots,0,1)$. We 
  define the function 
  $\Psi :S^{N-1}\to \mathbb R^{N-1}$ by 
  $\Psi (x_1,\dots,x_N)=(x_1,\dots,x_{N-1})$. It is a chart in some 
  neighborhood $U$ of $\boldsymbol n$. 
  We set
  \begin{equation}
    \label{e:fbar}
    \bar f = f\circ \Psi^{-1},
  \end{equation}
  which is a Gaussian process on $\Psi (U)$ with covariance
  \begin{equation}
    C(x,y)=\Cov(\bar f(x),\bar f(y))=
    \bigg\{\sum_{i=1}^{N-1} x_iy_i
      +\sqrt{\big(1- \textstyle\sum_{i=1}^{N-1} x_i^2\big)
        \big(1- \sum_{i=1}^{N-1} y_i^2\big)}\bigg\}^p.
  \end{equation}
  We choose the orthonormal frame field $(E_i)$ such that it satisfies 
  $E_i(\boldsymbol n)=\partial/\partial x_i$ with respect to the chart $\Psi$. 
  Then the covariant Hessian $(f_{ij}(\boldsymbol n))$ agrees with the usual Hessian of
  $\bar f$ at 0,  by
  noting that the Christoffel symbols $\Gamma^i_{kl}(\boldsymbol n )\equiv 0$.  
  Hence, to check \eqref{e:covariancesa}, \eqref{e:covariancesb}, we should
  prove analogous identities for $\bar f(0)$,
  $\bar f_i(0)=\frac \partial {\partial x_i} \bar f(0)$ and 
  $\bar f_{ij}(0)=\frac {\partial^2} {\partial x_i\partial x_j} \bar f(0)$.  
  
  The covariances of $\bar f, \bar f_i, \bar f_{ij}$,
  can be computed using a well-known formula (see e.g.~\cite{AT07}
    formula (5.5.4)),
  \begin{equation}
    \label{e:covariance}
    \Cov\Big(\frac {\partial^k \bar f(x)}{\partial x_{i_1}\dots\partial x_{i_k}}
      \frac {\partial^\ell \bar f(y)}{\partial y_{j_1}\dots\partial y_{j_\ell}}
      \Big)
    =
    \frac {\partial^{ k+\ell} C(x,y)}{\partial x_{i_1}\dots\partial x_{i_k}
      \partial y_{j_1}\dots\partial y_{j_\ell}}.
  \end{equation}
  Straightforward algebra then gives \eqref{e:covariancesa},
  \eqref{e:covariancesb}.
  Moreover, since the derivatives of a centered Gaussian field have centered Gaussian 
  distribution,  relations \eqref{e:covariancesa} and 
  \eqref{e:covariancesb} determine uniquely the joint distribution of 
  $f(\boldsymbol \sigma)$, $f_i(\boldsymbol \sigma)$, and
  $f_{ij}(\boldsymbol \sigma)$. This completes the proof of the claim (a).  
  
  The well-known rules how
  Gaussian distributions transform under conditioning (see, e.g., \cite{AT07}, 
    pages 10--11) then yield the claim (b) of the lemma.
\end{proof}

The next tool in order to prove Theorem~\ref{t:exactk} is the following
lemma which is an independent fact about the distribution of the
eigenvalues of the GOE. It allows us to deal with the (usually rather
  unpleasant) absolute value of the
determinant of the Hessian that appears in \eqref{e:metak}.

\begin{lemma}\label{LemmaGOEAp}
  Let $M^{N-1}$ be a $(N-1)\times (N-1)$ GOE matrix and $X$ be 
  an independent Gaussian random variable with mean $m$ and variance $t^2$.  
  Then, for any Borel set $G\subset\mathbb R$,
  \begin{equation}
    \label{e:Lemsefla}
    \begin{split}
      &\E \Big[ \big|\det M^{N-1}- X I\big| \indi\big\{ i(M^{N-1}- X I)=k,
          X \in G\big\}
        \Big] 
      \\&= \frac{\Gamma(\frac{N}{2})(N-1)^{-\frac N2}}{\sqrt{\pi t^2}}  
      \E_\GOE^N \bigg[ \exp\bigg\{\frac{N(\lambda_{k}^{N})^2}{2} 
          - \frac{\big(\{\frac{N}{N-1}\}^{\frac 12}\lambda_{k}^{N}-m\big)^2}{2t^2}\bigg\} 
        \indi\Big\{ \lambda_{k}^{N} \in \{\tfrac{N-1}{N}\}^{\frac12}G\Big\} \bigg].
    \end{split} 
  \end{equation} 
\end{lemma}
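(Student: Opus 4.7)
My plan is a dimension-bumping identity: after conditioning on $X=x$, the independent Gaussian is absorbed as the $(k{+}1)$-st ordered eigenvalue of an $N\times N$ GOE matrix, upgrading the $(N-1)$-dimensional GOE expectation on the left into an $N$-dimensional one on the right.

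First I would condition on $X=x$ and expand the $(N-1)$-dimensional GOE expectation using the joint density of the ordered eigenvalues $\mu_0\le\cdots\le\mu_{N-2}$ of $M^{N-1}$, which is proportional to $\prod_{i<j}|\mu_i-\mu_j|\exp(-\tfrac{N-1}{2}\sum\mu_i^2)$. Two observations make the integrand concrete: $|\det(M^{N-1}-xI)|=\prod_i|\mu_i-x|$, and the index condition $i(M^{N-1}-xI)=k$ is precisely $\mu_{k-1}<x<\mu_k$ (with the convention $\mu_{-1}=-\infty$, $\mu_{N-1}=+\infty$). Together with the Gaussian density of $X$, this turns the left-hand side into an explicit integral over $(x,\mu_0,\dots,\mu_{N-2})$.

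The key algebraic step is the rescaling $\mu_i=\sqrt{N/(N-1)}\,\lambda_i$ for $i=0,\dots,N-2$ and $x=\sqrt{N/(N-1)}\,\lambda_k$, together with inserting the rescaled $x$ at position $k$ in the list. The index condition guarantees that the resulting $N$-tuple $\lambda_0<\cdots<\lambda_{N-1}$ is correctly ordered with $\lambda_k$ in its proper slot. Under this substitution the $(N-1)$-GOE exponential weight $e^{-(N-1)\sum \mu_i^2/2}$ becomes $e^{-N\sum_{i\ne k}\lambda_i^2/2}$, and the combined product $\prod_{i<j}|\mu_i-\mu_j|\cdot\prod_i|\mu_i-x|$ becomes the full $N$-point Vandermonde $\prod_{0\le i<j\le N-1}|\lambda_i-\lambda_j|$, up to an explicit power of $\sqrt{N/(N-1)}$ coming from the rescaling. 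Finally I would multiply and divide by $e^{-N\lambda_k^2/2}$; this completes the exponential to the full $N\times N$ GOE eigenvalue density and simultaneously produces the distinguishing factor $e^{N(\lambda_k^N)^2/2}$ appearing in the statement.

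At this point the remaining integral is an $\E_\GOE^N$-expectation against the ordered eigenvalues, with the inserted point playing the role of $\lambda_k^N$; the indicator $\{x\in G\}$ translates into $\{\lambda_k^N\in\sqrt{(N-1)/N}\,G\}$ and the Gaussian density $(2\pi t^2)^{-1/2}e^{-(x-m)^2/(2t^2)}$ of $X$ becomes $(2\pi t^2)^{-1/2}e^{-(\sqrt{N/(N-1)}\lambda_k^N-m)^2/(2t^2)}$, exactly as demanded by the right-hand side. The main obstacle is identifying the overall constant: one must compute the ratio $Z_N/Z_{N-1}$ of the two GOE normalizing constants explicitly via Mehta's formula, which yields the $\Gamma(N/2)$ factor, combine it with the accumulated powers of $\sqrt{N/(N-1)}$ from the Vandermonde and Jacobians (which collapse to $(N-1)^{-N/2}$), and reconcile the $(2\pi t^2)^{-1/2}$ coming from the density of $X$ with the $(\pi t^2)^{-1/2}$ on the right via a $\sqrt{2}$ absorbed in the same ratio. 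This is routine bookkeeping but is the only point where a stray constant could sneak in, so it deserves care.
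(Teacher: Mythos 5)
Your proposal follows exactly the paper's argument: condition on $X=x$, expand the $(N-1)$-GOE expectation via the explicit ordered-eigenvalue density, use the index condition to slot $x$ between $\mu_{k-1}$ and $\mu_k$, perform the rescaling $\mu_i=\sqrt{N/(N-1)}\,\lambda_i$, $x=\sqrt{N/(N-1)}\,\lambda_k$ to complete the Vandermonde product to $N$ points, multiply and divide by $e^{-N\lambda_k^2/2}$ to complete the $N$-GOE weight and produce the factor $e^{N(\lambda_k^N)^2/2}$, and finally track the prefactor through $Z_N/Z_{N-1}$ from Selberg's formula. This is the same proof, with the same change of variables and the same identification of where the constant comes from.
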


\begin{proof}
  The left-hand side of \eqref{e:Lemsefla} can be written as
  \begin{equation}\label{e:AL1}
    \frac{1}{\sqrt{2\pi t^2}}\int_{G} e^{-\frac{(x-m)^2}{2t^2}} 
    \E_{GOE}^{N-1}\Big[ \big|\det(M-xI)\big|
      \indi\big\{i(M-xI)=k\big\}\Big] \d x.
  \end{equation}
  Observe that the event $\{i(M-xI)=k\}$ is equal to the event $\{A^{N}_k (x)\}$, where $A^{N}_k(x)$ is given by
  \begin{equation}
    \label{e:bad}
    \big\{\lambda^{N-1} : \lambda_0^{N-1} \leq \dots
      \leq \lambda_{k-1}^{N-1} < x
      \le\lambda_{k}^{N-1}\le \ldots \leq
      \lambda_{N-2}^{N-1}\big\}.
  \end{equation}
  Recall the explicit formula for the distribution $Q_N$ of the eigenvalues
  $\lambda^N_i$ of the GOE matrix $M^N$, 
  (see, e.g., \cite{Meh91}, p.~519),
  \begin{equation}
    \label{e:defQ}
    Q_N(\d \lambda^N )=
    \frac{1 } {Z_N}
    \prod_{i=0}^{N-1} e^{-\frac N2 (\lambda^N_i)^2}\d \lambda^N_i
    \prod_{0\le i< j<N}|\lambda^N_i-\lambda^N_j|
    \indi\{\lambda^N_0\le \dots \le \lambda^N_{N-1}\},
  \end{equation}
  where the normalization $Z_N$ can be computed from Selberg's
  integral (cf.~\cite{Meh91}, p.~529)
  \begin{equation}
    \label{e:defZ}
    Z_N=\frac 1 {N!}(2\sqrt 2 )^{N}N^{-N(N+1)/4}
    \prod_{i=1}^N \Gamma \Big(1+\frac i2\Big).
  \end{equation}
  Using this notation, 
  \begin{equation}
  \E_{GOE}^{N-1}\Big[ \big|\det(M-xI)\big|
      \indi\big\{i(M-xI)=k\big\}\Big] = \int_{A^{N}_k(x)} \prod_{i=0}^{N-2} |\lambda^{N-1}_i-x|
    Q_{N-1}(\d\lambda^{N-1}).
  \end{equation}
  and  \eqref{e:AL1} becomes
  \begin{equation}
    \frac{1}{\sqrt{2\pi t^2}}\int_{G} e^{-\frac{(x-m)^2}{2t^2}} 
    \int_{A^{N}_k(x)} \prod_{i=0}^{N-2} |\lambda^{N-1}_i-x|
    Q_{N-1}(\d\lambda^{N-1}) \d x.
  \end{equation}
  Comparing the product in the integrand with the van der Monde determinant in 
  \eqref{e:defQ} suggests considering $x$ as the $k+1$-th smallest eigenvalue 
  of a $N\times N$ GOE matrix. Indeed, if we substitute 
  $\lambda^{N-1}_i = \{ \frac{N}{N-1}\}^{1/2} \lambda^N_i $, for 
  $i \in \{0, \ldots, k-1 \}$, 
  $\lambda^{N-1}_i = \{ \frac{N}{N-1}\}^{1/2} \lambda^N_{i+1}$ for 
  $i \in \{k, \ldots, N-2 \}$ and 
  $x = \{\frac{N}{N-1} \}^{1/2}\lambda_{k}^{N}$ and perform the change of 
  variables, then we  obtain
  \begin{equation}\label{e:defase}
    \begin{split}
      &\frac{Z_{N}}{Z_{N-1}\sqrt{2\pi t^2}} 
      \Big(\frac{N}{N-1}\Big)^{\frac{(N+2)(N+1)}{4}} \\ 
      &\times \E_\GOE^N\bigg[ \exp\bigg(\frac{N(\lambda_{k}^{N})^2}{2} - 
          \frac{(\{\frac{N}{N-1}\}^{1/2}\lambda_{k}^{N}-m)^2}{2t^2}\bigg) 
        \indi \Big\{ \lambda_{k}^{N} \in \big\{\tfrac{N-1}{N}\big\}^{1/2}G
          \Big\} \bigg].
    \end{split}
  \end{equation}
  Plugging \eqref{e:defZ} into \eqref{e:defase} completes the proof of the
  lemma.
\end{proof}

The three above lemmas yield the proof of Theorem~\ref{t:exactk} as follows.

\begin{proof}[Proof of Theorem~\ref{t:exactk}]
  We first verify the conditions of Lemma~\ref{l:AT}. We use the same chart
  and orthogonal frame  as in the proof of Lemma~\ref{l:conditioning}.
  From \eqref{e:covariancesa}, \eqref{e:covariancesb}, it is not difficult
  to check that the joint distribution of
  $(f_i(\boldsymbol \sigma ), f_{ij}(\boldsymbol \sigma ))$ is non-degenerate for
  $\boldsymbol \sigma =\boldsymbol n$. By the
  continuity of the covariances, it is then non-degenerate for all
  $\boldsymbol \sigma \in U$,
  if $U$ is small enough. Similarly, using \eqref{e:covariance} we can
  verify  that \eqref{e:logcond} is satisfied on $U$. 
  Since $S^{N-1}$ can be covered by a finite number
  of copies of $U$ obtained by rotations around the center of sphere,
  the conditions of Lemma~\ref{l:AT} are satisfied.

  We can thus apply formula \eqref{e:metak}. First note that, due to the 
  rotational symmetry again, the integrand does not depend on 
  $\boldsymbol\sigma$. Hence, recalling that $\boldsymbol n$ denotes the 
  north pole and using \eqref{e:defW},\eqref{definitionf},
  \begin{equation}
    \label{e:baa}
    \begin{split}
      &\mathbb E \Crt_{N,k}(B) \\ &= \omega_N
      \E \Big[ \left| \det \nabla^2f(\boldsymbol n)   \right|
        \indi\{i(\nabla^2f_N(\boldsymbol\sigma))=k\}
        \indi\{f(\boldsymbol n) \in \sqrt{N}B\} \Big|
        \nabla f(\boldsymbol n) = 0 \Big]
      \phi_{\boldsymbol n }(0),
    \end{split}
  \end{equation}
  where $\omega_N$ is the volume of the sphere $S^{N-1}$,
  \begin{equation}
    \label{e:aab}
    \omega_N=\frac{2\pi^{N/2}}{\Gamma (N/2)}.
  \end{equation}
  The density of 
  gradient $\nabla f(\boldsymbol n)$ is same as the density of $(\bar f_i(0))_{1\le i<N}$. 
  Hence, using \eqref{e:covariancesa},
  \begin{equation}
    \label{e:aac}
    \phi_{\boldsymbol n}(0)= (2 \pi  p)^{-(N-1)/2}.
  \end{equation}
  To compute the expectation in \eqref{e:baa}, we condition on
  $f(\boldsymbol n)$ and use
  the fact that, by \eqref{e:covariancesa}, both $f$ and its Hessian are
  independent of 
  the gradient,
  \begin{equation}
    \label{e:bab}
    \begin{split}
      \E &\bigg[ | \det \nabla^2f_N(\boldsymbol n) |
        \indi\{i(\nabla^2f_N(\boldsymbol\sigma))=k, f(\boldsymbol n) \in \sqrt{N}B\} \big|
        \nabla f(\boldsymbol n) = 0 \bigg] \\
      &=  \E\bigg[ \E\big[|\det \nabla^2f_N(\boldsymbol n) |
          \indi\{i(\nabla^2f_N(\boldsymbol\sigma))=k, f(\boldsymbol n) \in \sqrt{N}B\} \big| f(\boldsymbol n) \big] \bigg].
    \end{split}
  \end{equation}
  By Lemma~\ref{l:conditioning}, the interior expectation satisfies
  \begin{equation}
    \label{e:bac}
    \begin{split}
      \E &\big[|\det \nabla^2f_N(\boldsymbol n) |
        \indi\{i(\nabla^2f_N(\boldsymbol\sigma))=k, 
          f(\boldsymbol n) \in \sqrt{N}B\} \big| 
        f(\boldsymbol n) \big] \\
      &= (2(N-1)p(p-1))^{\frac{N-1}{2}} 
      \E_\GOE^{N-1} \Big[\big|\det\big( M^{N-1}
          - p^{1/2}(2(N-1)(p-1))^{-1/2}f(\boldsymbol n)I\big)\big| \\
        &\quad\times \indi\Big\{i(M^{N-1}-p^{1/2}(2(N-1)(p-1))^{-1/2}f(\boldsymbol n)I)=k, 
          f(\boldsymbol n) \in \sqrt{N}B\Big\} \Big].
    \end{split}
  \end{equation}
  Inserting \eqref{e:bac} into \eqref{e:bab}, we can apply Lemma \ref{LemmaGOEAp} 
  with $m=0$, $t^2 = \frac{p}{2(N-1)(p-1)}$, 
  and $G = \sqrt{\frac{Np}{2(N-1)(p-1)}}B$. Using \eqref{e:aab} and
  \eqref{e:aac},
  we get after a little straightforward algebra,
  \begin{equation}
    \label{e:alternate}
    \E [\Crt_{N,k}(B)]
    = 2\sqrt\frac 2p (p-1)^{\frac N2} \E^N_{\GOE} \bigg[ e^{-N
        \frac{p-2}{2p} (\lambda^N_{k})^2} 
      \indi\Big\{ \lambda^N_{k} \in \sqrt{\frac{p}{2(p-1)}}B \Big\}
      \bigg].
  \end{equation}
  This completes the proof of Theorem \ref{t:exactk}. 
\end{proof}

\begin{proof}[Proof of Theorem~\ref{t:exactglobal}]
  Theorem~\ref{t:exactglobal} follows from Theorem~\ref{t:exactk} by
  summing over $k\in\{0,\dots,N-1\}$. The additional $N$ in the prefactor
  comes from the fact that $\rho_N$ is \textit{normalized} one-point
  correlation function, cf.~\eqref{e:rhonomr}.
\end{proof}

\section{Logarithmic asymptotics of the complexity}
\label{s:ldp}

In this section we apply the LDP for the $k$-th largest eigenvalue of GOE, 
Theorem~\ref{t:LDP}, to study the logarithmic asymptotics of the complexity of 
the spherical spin glass, that is we show  Theorems \ref{t:complexityk} and 
\ref{t:complexityglobal}.  Observe, that comparing \eqref{e:Ms} with
\eqref{e:Ms2}, we must use Theorem~\ref{t:LDP} with $\sigma =2^{-1/2}$.
Here and later we write $\lambda_i$ for $\lambda^N_i$.

\subsection{Proof of Theorem \ref{t:complexityk}.}

We analyse the exact formula given in Theorem~\ref{t:exactk}, or equivalently 
in \eqref{e:alternate}. By Theorem \ref{t:LDP} and the obvious
symmetry between the largest and the smallest eigenvalues, the $(k+1)$-th smallest
eigenvalue $\lambda^N_{k}$ of $M^N$  satisfies
the LDP with the good rate function 
$J_k(u)=(k+1)I_1(-u;2^{-1/2})$, where $I_1$ is defined in \eqref{e:LDPrate}. 
Set $t=u \sqrt{\frac{p}{2(p-1)}}$ and 
$\phi (x)=-\frac{p-2}{2p}x^2$. Then, by Theorem~\ref{t:exactk},
\begin{equation}
  \label{e:cra}
  \lim_{N\to\infty} \frac 1N \log 
  \mathbb E \Crt_{N,k}(u)=
  \frac 12 \log (p-1)+
  \lim_{N\to\infty} \frac 1N \log 
  \mathbb E_{\GOE}^N\big[e^{N\phi (\lambda_{k}^2)}
    \indi_{\lambda_{k}\le t}\big].
\end{equation}
By Varadhan's Lemma (see e.g.~\cite{DZ98}, Theorem 4.3.1 and  Exercise~4.3.11, 
  observe that $\phi $ is bounded from above, so that condition (4.3.2) of 
  \cite{DZ98} is obviously satisfied)  
\begin{equation}
  \begin{split}
    \label{e:crb}
    \sup_{x\in (-\infty,t)}&(\phi (x)-J_k(x))
    \le 
    \liminf_{N\to\infty} \frac 1N \log 
    \mathbb E_{\GOE}^N\big[e^{N\phi (\lambda_{k}^2)}
      \indi_{\lambda_{k}< t}\big]
    \\&\le
    \limsup_{N\to\infty} \frac 1N \log 
    \mathbb E_{\GOE}^N\big[e^{N\phi (\lambda_{k}^2)}
      \indi_{\lambda_{k}\le t}\big]\le
    \sup_{x\in (-\infty,t]}(\phi (x)-J_k(x)). 
  \end{split}
\end{equation}
It can be seen easily that for 
$t\le -\sqrt 2 $ the both suprema in \eqref{e:crb} equal
$\phi (t)-J_k(t)$. On the other hand, if 
$t>-\sqrt 2$, these suprema equal $\phi (\sqrt 2)$. Hence, using  
the definitions of $\phi $, $t$, $J_k$, $E_{\infty}$, and the scaling relation \eqref{e:Iscal}, 
\begin{equation}
  \eqref{e:cra}=
  \begin{cases}
    \frac 12 \log (p-1)-\frac{p-2}{p},
    &\qquad\text{if $u> -E_{\infty}$,}\\
    \frac 12 \log (p-1)-\frac{(p-2)u^2}{4(p-1)}-
    (k+1)I_1(-u;{E_{\infty}}/2),
    &\qquad\text{if $u\le -E_{\infty}$.}\\
  \end{cases}
\end{equation}
Using Remark~\ref{r:Is}, this  completes the proof of Theorem~\ref{t:complexityk}.
\begin{remark}\label{r:remextension}
  The proof of Theorem~\ref{t:complexityk} clearly extends to a Borel set $B$. 
  In fact, one just need to apply Varadhan's Lemma and find the supremum of 
  $\phi (x)-J_k(x)$ on a appropriate domain.
\end{remark}

\subsection{Proof of Theorem \ref{t:complexityglobal}}

Let $t$ and $\phi $ as in the previous proof. By Theorem \ref{t:exactglobal}, we have 
to study 
\begin{equation}
  \label{e:balu1}
  \lim_{N\rightarrow \infty} \frac{1}{N} \log 
  2N\sqrt\frac 2p (p-1)^{\frac N2}
  \mathbb E \int_{-\infty}^t e^{N\phi (x)} L_N(\d x).
\end{equation}
where $L_N$ is the empirical spectral measure of the GOE matrix (see below 
  \eqref{e:Ms}). The constant in front the integral gives the term 
$\frac{1}{2} \log(p-1)$ of $\Theta_p(u)$. We need to evaluate the contribution 
of the integral. For $t\le - \sqrt 2$, using 
$\indi_{\lambda_0\le t}\ge \indi_{\lambda_i\le t}$ for all $i$, we write
\begin{equation}
  \frac 1N \mathbb E\big[e^{N\phi (\lambda_0)}\indi_{\lambda_0\le t}\big]\le
  \mathbb E \int_{-\infty}^t e^{N\phi (x)} L_N(\d x)
  \le \mathbb E\big[e^{N\phi (t)}\indi_{\lambda_0\le t}\big].
\end{equation}
Taking the logarithm and dividing by $N$, the both sides of this inequality
converge to $\phi (t)-I_1(-t;2^{-1/2})$, by the same argument as in the
previous proof. Using the values of $t$ and $\phi $, this proves the
theorem for $u\le -E_{\infty}$.

For $t\in (-\sqrt 2, 0]$, we cannot use the smallest eigenvalue $\lambda_0$ in the lower
bound. Therefore we write, for $\varepsilon >0$,
\begin{equation}
  \frac 1N \mathbb E\big[e^{N\phi (t-\varepsilon)}
    \indi_{L_N((t-\varepsilon ,t))>0}\big]
  \le
  \mathbb E \int_{-\infty}^t e^{N\phi (x)} L_N(\d x)
  \le \mathbb E\big[e^{N\phi (t)}\indi_{\lambda_0\le t}\big].
\end{equation}
By the LDP for $\lambda_0$, $\mathbb P[\lambda_0\ge t]\to 1$ as
$N\to\infty$. Similarly, by the convergence of $L_N$ to the semi-circle
distribution, we have $\mathbb P[ L_N((t-\varepsilon ,t))>0]\to 1$ as
$N\to\infty$. Therefore, after taking the logarithm and dividing by $N$, we
find
\begin{equation}
  \phi (t-\varepsilon)\le \lim_{N\to\infty} \frac 1N \log 
  \mathbb E \int_{-\infty}^{t} e^{N\phi (x)} L_N(\d x)
  \le \phi (t).
\end{equation}
Since $\phi $ is continuous, the claim of the theorem follows for
$t\in(-\sqrt 2,0]$, or equivalently for $u\in (-E_{\infty},0]$. 
The proof in the case $u>0$ is analogous and we left it to the reader.


\section{The geometry of the bottom of the energy landscape} 
\label{s:gs}

In this section, we use our complexity estimates to obtain information about the bottom of the energy landscape.
We first prove
Theorems~\ref{t:nofiniteindex} 
and \ref{t:belowEk}.
We then prove Theorem \ref{t:groundstate}, that is we show that
the normalized energy of the ground state  
\begin{equation}
  GS^N = N^{-1} \inf\{ H_{N,p}(\boldsymbol \sigma):\boldsymbol \sigma \in
    S^{N-1}(\sqrt N)\}
\end{equation}
converges to $-E_0(p)$ in probability.

\subsection{Proof of Theorem \ref{t:nofiniteindex}}

We want to show that there are no critical points of a finite index of the 
Hamiltonian above the level $N(-E_{\infty}+\varepsilon)$. Let  $k$ and
$\varepsilon $ be as in the statement of the theorem.
Then, by Markov's inequality and Theorem~\ref{t:exactk},
\begin{equation}
  \label{goolkk}
  \begin{split}
    \Pro & \big[\Crt_{N,k}((-E_{\infty}+\varepsilon,\infty ))>0\big] 
    \leq 
    \E \big[\Crt_{N,k}((-E_{\infty} + \varepsilon,\infty)) \big]
    \\ &\leq c(p) (p-1)^{N/2} \E 
    \bigg[\exp\Big\{-\frac{N(p-2)(\lambda_{k})^2}{2p} \Big\}
      \indi\{\lambda_{k}  \geq -\sqrt{2} + \varepsilon' \}\bigg] 
    \\ &\leq c(p) (p-1)^{N/2} \Pro \Big[\lambda_{k}  \geq -\sqrt{2} +
      \varepsilon'\Big],
  \end{split}
\end{equation}
where $\varepsilon '=\varepsilon \sqrt 2 /E_{\infty}$.
By the LDP for the empirical spectral measure $L_N$
(see Theorem 1.1 of \cite{BG97}), we know that for some
$C(\varepsilon')>0$
\begin{equation}
  \Pro \Big[\lambda_{k}  \geq -\sqrt{2} + \varepsilon\Big]
  \leq e^{-C_{\varepsilon}N^2}.
\end{equation}
Combining this estimate with  \eqref{goolkk} completes the proof of Theorem 
\ref{t:nofiniteindex}.

\subsection{Proof of Theorem \ref{t:belowEk}}

We want to prove that there are no critical values of index $k$ of the
Hamiltonian below
$-N(E_{k}+\varepsilon )$. Using Theorem~\ref{t:complexityk}, we have 
\begin{equation}
  \mathbb E\big[ \Crt_{N,k}(-E_{k}-\varepsilon )\big]\le 
  \exp\big\{N
    \Theta_{k,p}(-E_{k}-\varepsilon) +o(N)\big\}.
\end{equation}
The function $\Theta_{k,p}$ is strictly increasing on $(-\infty,-E_{\infty})$. The 
constant $E_{k}>E_{\infty}$ is defined by $\Theta_{k,p}(-E_{k})=0$. Therefore, 
$\Theta_{k,p}(-E_{k}-\varepsilon )= c(k,p,\varepsilon )<0$. An application of 
Markov's inequality as before completes the proof of Theorem~\ref{t:belowEk}.

\subsection{Proof of Theorem \ref{t:groundstate}}

Note that, by Theorem~\ref{t:belowEk}
there are no minima of the Hamiltonian
below $-N(E_0(p)+\varepsilon)$, with high probability. This implies that for
all $\varepsilon >0$
\begin{equation}
  \label{posareq}
  \lim_{N\to\infty}\mathbb P[GS^N\ge -E_0(p)-\varepsilon ]=1.
\end{equation}

To find a matching upper bound on $GS^N$ we use known results about the free 
energy at positive temperature, more precisely the Parisi formula as proved by 
Talagrand \cite{Tal06}. Recall that the partition function of the $p$-spin spin 
glass is given by
\begin{equation}
  Z_{N,p}(\beta)= \int_{S^{N-1}(\sqrt{N})} 
  e^{-\beta H_{N,p}(\boldsymbol \sigma)} 
  \Lambda_N(\d \boldsymbol \sigma),
\end{equation}  
where $\Lambda_N$ is the normalized surface measure on the sphere
$S^{N-1}(\sqrt N)$. By Theorem 1.1 of \cite{Tal06}, 
\begin{equation}
  \label{ParisiFor}
  \lim_{N\rightarrow \infty} \frac{1}{N} \E \log Z_{N,p}(\beta) = F_p(\beta),
\end{equation}
where $F_p(\beta)$ is given by the variational principle as in (1.11) of 
\cite{Tal06}. Furthermore, it is known that in the case of the spherical
$p$-spin model, the $F_p(\beta )$ can be computed using the following simpler
variational problem (see (1.16) in \cite{PT07}),
\begin{equation}
  F_p(\beta)=  
  \inf_{q,m \in [0,1]} \frac{1}{2} \Big\{ \beta^2\big(1 + (m-1)q^p\big) 
    + \Big(1-\frac{1}{m}\Big) \log(1-q) + \frac{1}{m} \log\big(1
      -q(1-m)\big) \Big\}.
\end{equation}

We now analyze this variational problem. We replace $q=1-d\beta^{-1}$ and 
$m=c\beta^{-1}$ in the last equation, and we denote the function inside the 
infimum by $P(\beta,c,d)$,
\begin{equation}
  \begin{split}
    P(\beta,&c,d)=
    \frac{1}{2} \Big\{ \beta^2\big(1 + (c\beta^{-1}-1)(1-d\beta^{-1})^p\big) 
      \\ &+ \big(1-c^{-1}\beta\big) 
      \log\big(1-(1-d\beta^{-1})\big) 
      + c^{-1}\beta 
      \log\big[1 -(1-d\beta^{-1})(1-c\beta^{-1})\big] \Big\}.
  \end{split}
\end{equation}
The following lemma shows that this is the right scaling as $\beta$ tends to 
infinity.
\begin{lemma} 
  There exist constants $0< \varepsilon <M <\infty$, such that, for all $\beta$ 
  large enough,
  \begin{equation}
    F_p(\beta) = \inf_{c,d \in [0,\beta]} P(\beta,c,d) 
    = \inf_{c,d \in [\varepsilon ,M]} P(\beta,c,d).
  \end{equation}
\end{lemma}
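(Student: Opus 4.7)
The first equality is immediate from the change of variables $q = 1 - d/\beta$, $m = c/\beta$, which maps $[0,1]^2$ bijectively onto $[0,\beta]^2$ and sends the Parisi integrand exactly to $P(\beta,c,d)$.

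For the second equality, the strategy is to locate the infimum. The plan is to produce a single trial point $(c_0,d_0) \in (0,\infty)^2$ independent of $\beta$, together with constants $0<\varepsilon<M<\infty$, such that $P(\beta,c,d) > P(\beta,c_0,d_0)$ for every $(c,d)\in[0,\beta]^2\setminus [\varepsilon,M]^2$ and every sufficiently large $\beta$. Combined with continuity of $P$ on the compact set $[0,\beta]^2$, this forces the minimum to lie in $[\varepsilon,M]^2$. The natural trial point is the minimizer $(c_*,d_*)$ of the rescaled limit function
\[
h(c,d) := c + pd + c^{-1}\log(1+c/d),
\]
which arises from a Taylor expansion in $1/\beta$ of the three summands of $P$ on any compact subset of $(0,\infty)^2$, yielding $P(\beta,c,d) = \tfrac{\beta}{2}h(c,d) + O(\log\beta)$ uniformly on such compacts. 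The function $h$ is smooth and coercive on $(0,\infty)^2$ (it blows up as $d\to 0$, $c\to\infty$, or $d\to\infty$), so it attains its global minimum at some interior $(c_*,d_*)$; set $V_0 := \tfrac12 h(c_*,d_*)$. A short boundary check using $\partial_c h(0^+,d) = 1 - 1/(2d^2)$ evaluated at the boundary minimizer $d=1/\sqrt{p}$ of $\lim_{c\to 0^+}h(c,d) = pd + 1/d$ gives $1 - p/2 < 0$ for $p\ge 3$, so the minimum of $h$ is \emph{strictly} interior and $V_0 < \sqrt{p}$.

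The comparison $P(\beta,c,d) > V_0 \beta + O(\log\beta)$ outside $[\varepsilon,M]^2$ is then verified by a case split. When $c\ge\beta/2$ or $d\ge\beta/2$, the summand $\beta^2(1+(m-1)q^p)$ is of order $\beta^2$, dwarfing the other (at most $O(\beta\log\beta)$) contributions. When $M \le \max(c,d) \le \beta/2$, the uniform expansion gives $P \ge \tfrac{\beta}{2}(c+pd) + O(\log\beta)$, which exceeds $V_0\beta$ once $M$ is chosen large in terms of $V_0$. When $d\le \varepsilon$ (and $c$ is bounded), the dominant combination $(\beta/c)\log(1+c/d)$ tends to $\infty$ as $\varepsilon\downarrow 0$. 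Finally, when $c\le\varepsilon$ and $d\in[\varepsilon,M]$, a refined expansion shows $P(\beta,c,d) \ge \tfrac{\beta}{2}(pd+1/d) - O(\beta\varepsilon) + O(\log\beta) \ge \sqrt{p}\,\beta - O(\beta\varepsilon)$, which exceeds $V_0\beta$ for $\varepsilon$ small because $V_0 < \sqrt{p}$.

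The main obstacle is the last regime $c\to 0$: here $P$ does \emph{not} blow up, so the comparison is tight and hinges on the strict inequality $V_0 < \sqrt{p}$. Establishing this requires carefully tracking cancellations between the logarithmic singularities in the second and third summands of $P$, namely $(1-\beta/c)\log(d/\beta)$ and $(\beta/c)\log[1-(1-d/\beta)(1-c/\beta)]$, and then verifying from the stationarity equations for $h$ that the interior minimum lies strictly below the boundary value $\sqrt{p}$ whenever $p\ge 3$.
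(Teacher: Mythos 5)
The paper's own proof of this lemma is a one-line citation to Lemma~3 of Panchenko--Talagrand \cite{PT07}; no argument is given. Your proposal reconstructs the boundedness of the minimizer from scratch, which is a genuinely different route, and the central strategy is sound: you correctly identify that the rescaled limit $h(c,d) = c + pd + c^{-1}\log(1+c/d) = 2P(c,d)$ blows up as $d\to 0$, $d\to\infty$, or $c\to\infty$, but stays finite as $c\to 0^+$ with limit $pd+1/d\ge 2\sqrt p$, so the only delicate boundary direction is $c\to 0$. Your boundary-derivative computation $\partial_c h(0^+,d) = 1 - \tfrac1{2d^2}$, evaluated at the boundary minimizer $d=1/\sqrt p$ to give $1-p/2<0$ for $p\ge 3$, is a clean way to show the interior minimum strictly undershoots the boundary value, and it settles the crucial strict inequality.

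Two things undercut the proposal as written. First, the closing paragraph is confused: it announces that the strict inequality $V_0<\sqrt p$ still ``requires carefully tracking cancellations\dots and then verifying from the stationarity equations for $h$,'' whereas the boundary-derivative computation you gave a few lines earlier already establishes it, with no reference to the stationarity equations at all. This reads as though you were unsure the argument was complete, when in fact it is. Second, the case split is stated with error terms that are not actually uniform on the regions considered: the bound $P=\tfrac\beta2 h + O(\log\beta)$ holds uniformly only on fixed compacts of $(0,\infty)^2$, so in the regimes $\max(c,d)\ge M$, $d\le\varepsilon$, or $c,d$ of order $\beta$, one must argue with coarser but sufficient inequalities (e.g.\ when $q$ or $m$ is bounded away from $1$, the leading $\beta^2(1+(m-1)q^p)$ term alone dominates; when $d\le\varepsilon$, the $c^{-1}\log(1+c/d)$ contribution dominates the $\log(d/\beta)$ correction). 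The conclusions you state in each case are reachable, but the blanket ``$O(\log\beta)$'' and ``$O(\beta\varepsilon)$'' claims as written do not hold uniformly and would need to be replaced by these case-specific bounds.
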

\begin{proof}
  It follows directly from Lemma 3 in \cite{PT07}.
\end{proof}

As $\beta \to \infty$, the function $\beta^{-1}P(\beta ,c,d)$ converges
uniformly on the compact set $c,d\in [\varepsilon ,M]$. Therefore the
last lemma implies that 
\begin{equation}
  \label{apro}
  \lim_{\beta \to \infty}
  \beta^{-1}F_p(\beta) 
  =\inf_{c,d \in [\varepsilon ,M]} \frac{1}{2}\Big\{  c 
    + p d + \frac{1}{c} \big(\log (c+d) -\log d \big) \Big\}
  =: \inf_{c,d \in [\varepsilon ,M]} P(c,d).
\end{equation}

\begin{lemma}
  \label{gammaequalE} 
  We have that 
  \begin{equation}
    \gamma \equiv \inf_{c,d \in [\varepsilon ,M]} P(c,d) = E_0(p). 
  \end{equation}
\end{lemma}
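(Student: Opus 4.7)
The plan is to solve the two-variable minimization analytically and recognize the critical value as the number $E_0(p)$ defined implicitly by $\Theta_{0,p}(-E_0)=0$.

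First, I would compute the two stationarity conditions for $P$. A direct differentiation gives
\begin{equation*}
  \partial_d P = \tfrac12\bigl(p - (d(c+d))^{-1}\bigr), \qquad
  \partial_c P = \tfrac12\bigl(1 + (c(c+d))^{-1} - c^{-2}\log((c+d)/d)\bigr).
\end{equation*}
Setting $\partial_d P = 0$ gives $pd(c+d) = 1$, and plugging this into $\partial_c P = 0$ reduces the second equation to $c^2 = pd^2 - 1 - \log(pd^2)$. The first equation yields both $pd^2 - 1 = -pcd$ and $\log((c+d)/d) = -\log(pd^2)$, which combined with the second equation inside the formula $2P(c,d) = c + pd + c^{-1}[\log(c+d) - \log d]$ produce the clean identity
\begin{equation*}
  P(c^*, d^*) \;=\; c^* + pd^*
\end{equation*}
at any interior critical point.

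Second, I introduce the substitution $y := pd^2 \in (0,1)$. The constraint $pd(c+d)=1$ combined with $c>0$ forces $c = (1-y)/\sqrt{py}$, so
\begin{equation*}
  E(y) \;:=\; c^* + pd^* \;=\; \frac{1+(p-1)y}{\sqrt{py}},
\end{equation*}
and substituting the closed form of $c$ into $c^2 = pd^2 - 1 - \log(pd^2)$ turns it into the transcendental relation
\begin{equation*}
  -\log y \;=\; \frac{(1-y)(1+(p-1)y)}{py}, \qquad y \in (0, 1/(p-1)).
\end{equation*}

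Third, I would verify that this relation is precisely $\Theta_{0,p}(-E(y)) = 0$. Using $E_\infty^2 = 4(p-1)/p$, short algebraic checks yield
\begin{equation*}
  E(y)^2 - E_\infty^2 \;=\; \frac{(1-(p-1)y)^2}{py}, \qquad
  E(y) + \sqrt{E(y)^2 - E_\infty^2} \;=\; \frac{2}{\sqrt{py}},
\end{equation*}
the second identity valid for $y<1/(p-1)$. Substituting these into the formulas \eqref{I_1} and \eqref{e:thetakp} for $I_1$ and $\Theta_{0,p}$, the $\log 2$ and $\log p$ contributions cancel and $\Theta_{0,p}(-E(y)) = 0$ reduces to exactly the transcendental equation from the previous step. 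Since $E_0(p)$ is the unique positive solution of $\Theta_{0,p}(-E)=0$ in the regime $E \ge E_\infty$ (equivalently $y \in (0, 1/(p-1))$), this identifies $E(y^*) = E_0(p)$ at the unique interior critical point. To promote the critical value to a global infimum, I would observe that $P(c,d) \to +\infty$ as $c \to 0^+$ (through the $c^{-1}\log((c+d)/d)$ term) and that $P$ is continuous on the compact box $[\varepsilon,M]^2$, so the infimum is attained in the interior and must equal $E_0(p)$. The main obstacle is the algebraic bookkeeping in the third step — tracking the correct branch of the square root and verifying that the logarithmic and polynomial terms line up after substitution — but once those cancellations go through the argument is essentially mechanical.
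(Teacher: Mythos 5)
Your derivation reproduces the paper's argument almost step for step, just with a different change of variable. You parametrize the critical point by $y = pd^2$, while the paper sets $y = c+d$ and works with $a = py^2$; the two are related by $a = 1/y'$ (your $y'$ being their $1/a$), and your transcendental equation $-\log y = (1-y)(1+(p-1)y)/(py)$ is exactly the paper's $g_p(a) = (a-1)^2 + p(a-1) - pa\log a = 0$ after the substitution. The clean identity $P(c^*,d^*) = c^*+pd^*$ is the same as the paper's $\gamma = y + \tfrac{p-1}{py}$, and the subsequent verifications of $E(y)^2 - E_\infty^2 = (1-(p-1)y)^2/(py)$, $E(y)+\sqrt{E(y)^2-E_\infty^2} = 2/\sqrt{py}$, and the cancellation of all logarithmic constants when checking $\Theta_{0,p}(-E(y))=0$ are all correct.

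One concrete error, though: the assertion that $P(c,d)\to+\infty$ as $c\to 0^+$ is false. The log term is $c^{-1}\log\!\big(1+c/d\big)$, and expanding $\log(1+c/d) = c/d - (c/d)^2/2 + \cdots$ shows
\begin{equation*}
  \lim_{c\to 0^+} c^{-1}\log\!\big((c+d)/d\big) = 1/d,
\end{equation*}
so $P(c,d)\to \tfrac12(pd + 1/d)$, a finite value; $P$ does blow up as $d\to 0^+$ or as $c,d\to\infty$, but not as $c\to 0^+$. So the final paragraph, which is supposed to promote the critical value to the global infimum over the compact box $[\varepsilon,M]^2$, rests on an incorrect limit and does not rule out a boundary minimizer. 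To be fair, the paper is also terse here — it reads off $\gamma$ at the unique interior critical point and verifies $\Theta_{0,p}(-\gamma)=0$ without explicitly arguing that the infimum occurs at that critical point rather than on the boundary; that identification implicitly relies on the choice of $\varepsilon,M$ coming from Lemma 3 of \cite{PT07} and the uniform convergence of $\beta^{-1}P(\beta,\cdot,\cdot)$. But you should not replace that (tacit) justification with a demonstrably false divergence claim.
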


\begin{proof}
  The constant $E_0(p)$ is the unique solution of the equation
  $\Theta_{0,p}(-x)=0$ (see \eqref{e:thetakp}, \eqref{e:E_k}). Therefore, 
  to prove the lemma it suffices to show
  \begin{equation}
    \Theta_{0,p} (-\gamma) = 0.
  \end{equation}

  A critical point of $P(c,d)$ satisfies
  \begin{gather}
    \label{e:cda}
    p^{-1}=d(c+d)\\
    \label{e:cdb}
    c^2(c+d)+c = (c+d)(\log(c+d) - \log(d)).
  \end{gather}
  Writing $y=c+d$, it follows from \eqref{e:cda} that
  $d=(py)^{-1}$, $c=y-d=(py^2-1)/py$. From \eqref{e:cda}  
  and $d\ge \varepsilon $, we further obtain $d=\frac{1}{2}(-c + \sqrt{c^2+4p^{-1}})$.
  Therefore $c,d\ge \varepsilon $ implies
  \begin{equation}
    y = c+d > p^{-1/2}.
  \end{equation}
  Inserting these computations into \eqref{e:cdb}, we obtain that 
  $y$ is a solution of 
  \begin{equation}
    \label{e:eqy}
    \Big(\frac{py^2-1}{py}\Big)^2y + \frac{py^2-1}{py} = y\log(py^2),
    \qquad y>p^{-1/2}.  
  \end{equation}
  Setting $a=p y^2$, this is equivalent to 
  \begin{equation}
    \label{e:eqa}
    g_p(a):=(a-1)^2+p(a-1)-pa\log a=0,\qquad a>1.
  \end{equation}
  The function $g$ satisfies $g_p(1)=g'_p(1)=0$, $g''_p(1)<0$ and $g''$ is 
  increasing on $[1,\infty)$. Therefore, $g_p$ has a unique minimum $a_0$ on 
  $[1,\infty)$ and is strictly increasing on $[a_0,\infty)$. Moreover, 
  since $g_p(p-1)<0$, the equation \eqref{e:eqy} has a unique solution satisfying 
  \begin{equation}
    \label{e:ycond}
    a=py^2>p-1 \qquad\text{and thus}\qquad y\ge \sqrt{(p-1)/p}.
  \end{equation}

  Using the definition of $\gamma $ and \eqref{e:eqy}
  \begin{equation}
    \label{e:gamma}
    \gamma = \frac{1}{2}\Big(\frac{py^2-1}{py} + \frac{1}{y} +
      \frac{py}{py^2-1}\log(py^2)\Big)
    =y + \frac{p-1}{yp}.
  \end{equation}
  To compute $\Theta_{0,p}(-\gamma )$ observe that, by \eqref{e:gamma},
  $\gamma^2 - E_{\infty}^2 = \big(y-\frac{p-1}{py}\big)^2$. Hence, \eqref{e:ycond}
  implies
  \begin{equation}
    \gamma + \sqrt{\gamma^2 - E_{\infty}^2} = 2y.
  \end{equation}
  Inserting these results into definition \eqref{e:thetakp} of
  $\Theta_{0,p}$ and using equation \eqref{e:eqy} yields after a little
  algebra that $\Theta_{0,p}(-\gamma)=0$. This completes the proof of
  Lemma~\ref{gammaequalE}
\end{proof}

We can now prove the upper bound on $GS^N$. Note that
\begin{equation}
  \frac{1}{\beta N} \E \log Z_N =  
  \frac{1}{\beta N} \E
  \log \int_{S^N(\sqrt{N})} e^{-\beta H_{N,p}(\boldsymbol \sigma)}
  \Lambda_N(\d \boldsymbol \sigma ) \leq 
  - \E  GS^N.
\end{equation}
Taking the limits $N\to\infty$ and then $\beta \to \infty$, using \eqref{apro}
and Lemma \ref{gammaequalE}, we obtain
\begin{equation}
  \label{skasa32}
  \E GS^N \leq -E_0(p)+ \varepsilon \qquad \text{for $N$ large enough}.
\end{equation}
By Borell-TIS inequality (see Theorem 2.7 in \cite{AW09}),
\begin{equation}
  \label{dordec}
  \Pro \big[\big|GS^N + \E GS^N\big| > \varepsilon \big] \leq e^{-N\varepsilon^2}.
\end{equation}
Combining \eqref{skasa32} and \eqref{dordec}, we get that for all 
$\varepsilon > 0$, for $N$ large enough,
\begin{equation}
  \Pro \big[GS^N  \le -E_0(p) + 2\varepsilon \big] \ge 1-\varepsilon. 
\end{equation}
This combined with the lower bound \eqref{posareq} completes the proof of 
Theorem \ref{t:groundstate}.

\section{The Thouless-Anderson-Palmer complexity}
\label{s:TAP}
In this section, we study the mean number of solutions of the
Thouless-Anderson-Palmer (TAP) equation \eqref{tapeq}, called the TAP complexity.
Using the previous results of this paper, we give, in Theorem~\ref{t:TAP}, a formula 
for the TAP complexity at any finite temperature. In physics literature
\cite{CS95}, the TAP complexity was predicted and used to derive a formula for
the complexity of the spherical $p$-spin model. In Lemma 6.3, we show that
the formula of \cite{CS95} agrees with our Theorem~\ref{t:complexityk}. For
further physics interpretation of the
TAP solutions, such as connections to metastable states, the reader is
invited to check \cite{KPV93,CS95} and the references therein.

Let $B(0,\sqrt{N})\subset \mathbb R^N$ be the open ball of radius $\sqrt{N}$ 
centered at $0$. We define the TAP 
functional as Kurchan, Parisi and Virasoro in \cite{KPV93}. For 
$\boldsymbol m = (m_1,\ldots,m_{N})$ in $B(0,\sqrt{N})$ and 
$q= \frac{1}{N} \sum_i m_i^2\in [0,1)$, let
\begin{equation}
  \label{tapfunc} 
  F_\TAP(\boldsymbol m) 
  = \frac{1}{2^{1/2}N^{(p+1)/2}} 
  \sum_{i_1, \ldots, i_p=1}^N 
  J_{i_1, \ldots i_p}m_{i_1}\cdots m_{i_p} + B_{p,\beta }(q),
\end{equation}
where, as usual, $J_{i_1, \ldots i_p}$ are independent standard normal random 
variables and
\begin{equation}
  \label{e:Bq}
  B(q)=B_{p,\beta }(q) = - \frac{1}{2\beta} \log (1-q) - \frac{\beta}{4} 
    \big( 1 + (p-1)q^p - pq^{p-1} \big).
\end{equation}

Notice that the TAP functional $F_\TAP$ can be written in spherical
coordinates: Defining
$\boldsymbol\sigma = q^{-1/2}\boldsymbol m\in S^{N-1}(\sqrt{N})$ and
$h_{N,p}(\boldsymbol \sigma )=N^{-1}H_{N,p}(\boldsymbol \sigma )$,
\begin{equation}
  F_\TAP(\boldsymbol m) 
  = f_\TAP(q,\boldsymbol\sigma) 
  =  2^{-1/2}q^{p/2} h_{N,p}(\boldsymbol\sigma) +B_{p,\beta }(q).
\end{equation}

The TAP equations are equations for critical points of the TAP functional,
\begin{equation}
  \label{tapeq}
  \frac{\partial}{\partial m_i} F_\TAP(\boldsymbol m) = 0.
\end{equation}
Since $q=0$ is not a critical point of $F_\TAP$, the equations
\eqref{tapeq} are equivalent with
\begin{gather}
  \label{e:condh}
  \frac{\partial}{\partial \sigma_i} 
  f_\TAP(q,\boldsymbol\sigma)=0 
  \quad \Longleftrightarrow \quad
    \frac{\partial}{\partial \sigma_i} H_{N,p}(\boldsymbol\sigma) = 0,\\
  \label{e:condq} 
  \frac{\partial}{\partial q} f_\TAP(q,\boldsymbol\sigma) 
  = 2^{-3/2} p q^{p/2-1} h_{N,p}(\boldsymbol\sigma) + B'(q) = 0.
\end{gather}
Therefore, a solution of the TAP equation \eqref{tapeq} must be a critical point of the 
Hamiltonian $H_{N,p}$ such that $q$ satisfies \eqref{e:condq}. A critical 
point $(q,\boldsymbol\sigma)$ of the TAP functional of index $k$ is thus either a 
critical point of $H_{N,p}$ of index $k$ satisfying \eqref{e:condq} and 
$\frac{\partial^2}{\partial q^2} f_\TAP(q,\boldsymbol\sigma) > 0$, or a 
critical point of $H_{N,p}$ of index $k-1$ satisfying \eqref{e:condq} and
$\frac{\partial^2}{\partial q^2} f_\TAP(q,\boldsymbol\sigma) < 0$.

Let $\mathcal{N}_k(u,\beta)$ represent the number of critical points of index 
$k$ of the TAP functional with normalized energy $h_{N,p}$ smaller than $u$ at temperature $\beta^{-1}$, 
\begin{equation}
  \mathcal{N}_k(u,\beta) =
  \sum_{(q,\boldsymbol\sigma):\nabla f_\TAP(q,\boldsymbol\sigma) = 0} 
  \indi \big\{ h_{N,p}(\boldsymbol\sigma) \in (-\infty,u], i(\nabla^2
      f_\TAP (q,\boldsymbol\sigma))=0\big\}.
\end{equation} 
For each $u \in (-\infty,-E_{\infty}]$, $p\geq 3$, and $\beta>0$, we further define
\begin{align}
  \label{e:betau}
  &\beta(u) = \frac{p}{2^{3/2}(p-1)}
  \Big(\frac{p}{p-2}\Big)^{\frac{p-2}{2}}
  \big(-u-\sqrt{u^2-E_{\infty}^2}\big)\\
  \label{eq:ustar}
  &u^{\star}(\beta) = \sup\big\{v\in(-\infty,-E_{\infty}]: \beta(v) < \beta \big\}.
\end{align}
Observe that $\beta (\cdot)$ is an increasing function with
$\lim_{u\to-\infty} \beta (u)=0$.

\begin{theorem}
  \label{t:TAP} 
  For all $\beta > 0$ and $p\geq 3$,
  \begin{itemize}
    \item[(a)] If $u \leq -E_{\infty}$, $k=0$
    \begin{equation}
      \lim_{N\rightarrow \infty} \frac{1}{N} \log \E
      \mathcal{N}_0(u,\beta) = \Theta_{0,p}(u^{\star}(\beta)\wedge u).
    \end{equation}

    \item[(b)] If $u \leq -E_{\infty}$, $k>0$,
    \begin{equation}
      \lim_{N\rightarrow \infty} \frac{1}{N} \log \E
      \mathcal{N}_k(u,\beta) =
      \Theta_{k-1,p}(u^{\star}(\beta)\wedge u).
    \end{equation}

    \item[(c)] If $\beta < \beta(-E_{(k-1)\vee 0}(p))$ or if
    $u < -E_{(k-1)\vee 0}(p)$, then  $\mathcal{N}_k(u,\beta)$
    tends to zero in probability as $N\to\infty$.
  \end{itemize}
\end{theorem}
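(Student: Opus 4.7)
The overall plan is to turn the TAP problem into a count of spherical critical points of $H_{N,p}$ weighted by the number of compatible roots of a one-variable equation in $q$, then to apply Theorems~\ref{t:complexityk} and~\ref{t:belowEk}. The key structural observation is that in the spherical coordinates $(q,\boldsymbol\sigma)$ of Section~\ref{s:TAP}, $(q,\boldsymbol\sigma)$ is a critical point of $f_{TAP}$ iff $\boldsymbol\sigma$ is a critical point of $H_{N,p}$ on $S^{N-1}(\sqrt N)$ and $q\in(0,1)$ satisfies
\begin{equation}
G(q):=2^{-3/2}pq^{p/2-1}h+B'(q)=0,\qquad h:=h_{N,p}(\boldsymbol\sigma).
\end{equation}
At such a point the mixed partial $\partial_q\partial_{\sigma_i}f_{TAP}$ is proportional to the vanishing spherical gradient of $h_{N,p}$, so the Hessian of $f_{TAP}$ is block-diagonal; since $\nabla^2_\sigma f_{TAP}=2^{-1/2}q^{p/2}\nabla^2_\sigma h_{N,p}$ differs from $\nabla^2H_{N,p}$ by a positive scalar, the TAP index equals $i(\nabla^2 H_{N,p}(\boldsymbol\sigma))+\indi\{G'(q)<0\}$. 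Hence a TAP critical point of index $k$ comes either from a critical point of $H_{N,p}$ of index $k$ with a root $q$ satisfying $G'(q)>0$, or from a critical point of $H_{N,p}$ of index $k-1$ with a root satisfying $G'(q)<0$.

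Next I would analyse the one-variable equation $G(q)=0$. Writing it as $-h=F(q)$ with $F(q):=2^{3/2}B'(q)/(pq^{p/2-1})$, one has $B'>0$ on $(0,1)$ and, using $p\ge3$ and the explicit form \eqref{e:Bq}, $F(q)\to+\infty$ at both endpoints, so $F$ attains a positive minimum on $(0,1)$. After checking that this minimizer $q_\star(\beta)$ is unique (by showing $F'$ has a single sign change), set $-u^\star(\beta)=F(q_\star(\beta))$. At threshold both $G(q_\star)=0$ and $G'(q_\star)=0$, which simultaneously give $2qB''(q)=(p-2)B'(q)$ and $h=-B'(q)/(2^{-3/2}pq^{p/2-1})$; a direct substitution of \eqref{e:Bq} into these two identities and elimination of $q$ yields precisely the closed form \eqref{e:betau} for $\beta(u^\star)$, matching the definition \eqref{eq:ustar}. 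For $h<u^\star(\beta)$ the equation $G=0$ then has exactly two roots $q_-(h)<q_+(h)$ with $G'(q_-)<0<G'(q_+)$; for $h>u^\star(\beta)$ it has none; and $\{h=u^\star(\beta)\}$ is a null event for the (finite, a.s.) set of critical values of $H_{N,p}$.

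Combining the two steps, one gets the almost-sure identity
\begin{equation}
\mathcal N_k(u,\beta)=\Crt_{N,k}\bigl((-\infty,u\wedge u^\star(\beta))\bigr)+\indi\{k\ge1\}\,\Crt_{N,k-1}\bigl((-\infty,u\wedge u^\star(\beta))\bigr).
\end{equation}
Taking expectations and invoking Theorem~\ref{t:complexityk}, and using that $\Theta_{k-1,p}>\Theta_{k,p}$ on $\{I_1>0\}=(-\infty,-E_\infty)$, yields $\Theta_{0,p}(u\wedge u^\star(\beta))$ for $k=0$ (case (a)) and $\Theta_{k-1,p}(u\wedge u^\star(\beta))$ for $k\ge1$ (case (b)). For (c), if $\beta<\beta(-E_{(k-1)\vee0}(p))$ or $u<-E_{(k-1)\vee0}(p)$, then $u\wedge u^\star(\beta)<-E_{(k-1)\vee0}(p)$, and Theorem~\ref{t:belowEk} applied to indices $k$ and $(k-1)\vee0$ produces an exponentially small probability of finding any $H_{N,p}$-critical point of index $\ge(k-1)\vee0$ below that level; Markov's inequality then gives $\mathcal N_k(u,\beta)\to0$ in probability.

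The technically delicate step is the second one: verifying that $F$ has a unique minimizer on $(0,1)$, and carrying out the algebra that reduces the threshold system $G(q)=G'(q)=0$ to the explicit expression \eqref{e:betau}. Stage~1 is a routine Hessian calculation and Stage~3 is a direct invocation of the complexity and geometry results already proved in Sections~\ref{s:ldp} and~\ref{s:gs}; all nontrivial content is concentrated in the one-dimensional analysis of $G$.
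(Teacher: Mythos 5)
Your overall strategy matches the paper's: decompose TAP critical points as a critical point $\boldsymbol\sigma$ of $H_{N,p}$ paired with a root $q$ of a one-variable equation, read the TAP index as the Hamiltonian index plus the sign contribution of $\partial_q^2 f_{\TAP}$, and transfer the complexity count via Theorem~\ref{t:complexityk}. The block-Hessian argument, the identification of $u^{\star}(\beta)$ through the double-root condition $G(q)=G'(q)=0$, and the route to part (c) via Theorem~\ref{t:belowEk} plus Markov are all in line with the paper (which routes (c) through (a), (b) and Markov instead, an equivalent shortcut).

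The one genuine gap is the claim that $F(q):=2^{3/2}B'(q)/(p\,q^{p/2-1})$ has a unique minimizer on $(0,1)$, i.e.\ that $F'$ changes sign once. Writing $z(q)=(1-q)q^{p/2-1}$, one has $F(q)=\tilde F(z(q))$ with $\tilde F(z)=\frac{\sqrt2}{\beta p z}+\frac{\sqrt2\,\beta(p-1)}{2}\,z$, convex with unique minimum at $z_{\star}=\sqrt{2/(\beta^2p(p-1))}$ and minimal value $\tilde F(z_{\star})=E_{\infty}$. Since $z(q)$ rises to its maximum $z_{\max}=\frac2p\bigl(\frac{p-2}{p}\bigr)^{(p-2)/2}$ at $q=(p-2)/p$ and falls back to $0$, the composite $F$ is unimodal only when $z_{\star}\ge z_{\max}$, i.e.\ $\beta\le\beta(-E_{\infty})$. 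For $\beta>\beta(-E_{\infty})$, $F$ is W-shaped: two global minima with the common value $E_{\infty}$ at the two solutions of $z(q)=z_{\star}$, and a local maximum $\tilde F(z_{\max})$ at $q=(p-2)/p$. In that regime $u^{\star}(\beta)=-E_{\infty}$, and for $h\in(-\tilde F(z_{\max}),-E_{\infty})$ the equation $G(q)=0$ has \emph{four} roots, not two, so your stated almost-sure identity for $\mathcal N_k$ fails. The fix (which the paper uses) is to observe that $G'$ alternates sign between consecutive roots of $G$, so each Hamiltonian critical point with energy below $u\wedge u^{\star}(\beta)$ contributes an equal number $m\in\{1,2\}$ of $G'>0$ and $G'<0$ roots; this yields only the two-sided bound
\begin{equation}
\frac{\mathcal N_k(u,\beta)}{\Crt_{N,k-1}\bigl((-\infty,u\wedge u^{\star}(\beta))\bigr)+\Crt_{N,k}\bigl((-\infty,u\wedge u^{\star}(\beta))\bigr)}\in[1,2],
\end{equation}
which is all that is needed for the logarithmic asymptotics. (Since $u^{\star}(\beta)=-E_{\infty}$ when $\beta>\beta(-E_{\infty})$ and $u\le-E_{\infty}$, one has $u\wedge u^{\star}(\beta)=u$ there, so the stated limits in (a) and (b) are unaffected.)
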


\begin{proof}
  We start by proving parts (a) and (b). Computing $B'(q)$ and multiplying 
  the both sides of \eqref{e:condq} by $2^{3/2}\beta (1-q)/p$,
  \eqref{e:condq} is equivalent to
  \begin{equation}
    \label{e:basequad}
    2^{-1/2}(p-1) \beta^2\big((1-q)q^{p/2-1}\big)^2
    + h_{N,p}(\boldsymbol\sigma)\beta (1-q)q^{p/2-1}
    + \frac{2^{1/2}}{p}  = 0.
  \end{equation}
  Setting
  \begin{equation}
    \label{base}
    z=z(q)=(1-q)q^{p/2-1}
  \end{equation} 
  we obtain
  \begin{equation}
    \label{base23}
    \beta z = \frac{1}{2^{1/2}(p-1)}\Big(-h_{N,p}(\boldsymbol\sigma) 
      \pm \sqrt{h_{N,p}^2(\boldsymbol\sigma) - E_{\infty}^2}\Big).
  \end{equation}
  Thus, a solution  $(\boldsymbol\sigma,q)$ to the TAP equation
  \eqref{tapeq} is a critical 
  point $\boldsymbol \sigma $ of the Hamiltonian that satisfies \eqref{base} and \eqref{base23}. 
  
  The next lemma counts the number of solutions of equation \eqref{base}.
  \begin{lemma}
    \label{simplelemma}
    The function $ f(q)= (1-q)q^{p/2-1}-\lambda$, has exactly two, one 
    or no zeros in $q\in [0,1)$ if 
    $0 < \lambda < \frac{2}{p}(\frac{p-2}{p})^{\frac{p-2}{2}} $, 
    $\lambda = \frac{2}{p}(\frac{p-2}{p})^{\frac{p-2}{2}} $
    and $\lambda >\frac{2}{p}(\frac{p-2}{p})^{\frac{p-2}{2}}$, respectively.
  \end{lemma}

  \begin{proof}
    If $\lambda = 0$, then clearly $0$ and $1$ are the only zeros. Since 
      $f'(q)= q^{p/2-2}(p-{pq}-2))/2$,
    there is a unique critical point of $f$ in $(0,1)$, a maxima at 
    $q=\frac{p-2}{p}$. Now, varying $\lambda$ gives us the result of the
    lemma.
  \end{proof}

  Applying the lemma to equations \eqref{base}, \eqref{base23}, we see that 
  in order to $(\boldsymbol\sigma,q)$ be a critical point,
  $h_{N,p}(\boldsymbol\sigma)$ must satisfy (ignoring the equalities which
    have zero probability)
  \begin{equation}
    \label{eq:oiuek}
    0 < \frac{1}{2^{1/2}\beta(p-1)}
    \Big(-h_{N,p}(\boldsymbol\sigma) 
      \pm \sqrt{h_{N,p}^2(\boldsymbol\sigma) 
        - E_{\infty}^2}\Big)
    < \frac{2}{p}\Big(\frac{p-2}{p}\Big)^{\frac{p-2}{2}},
  \end{equation}
  which is equivalent to (cf.~\eqref{e:betau}, \eqref{eq:ustar})
  \begin{equation}
    \label{e:hcond}
    h_{N,p}(\boldsymbol \sigma )<u^\star(\beta ).
  \end{equation}
  Furthermore, if \eqref{e:hcond} is satisfied, the equations
  \eqref{base}, \eqref{base23} have two
  solutions $q_1$, $q_2$ such that  $0<q_1 < (p-2)/p <  q_2 <1$. 

  At a critical point, using \eqref{e:basequad} to compute $h_{N,p}$, and
  \eqref{base} to simplify the terms containing~$q^p$, 
  \begin{equation}
    \label{eq:oiuek2}
    \begin{split}
      \frac{\partial^2}{\partial q^2}& f_\TAP(q,\boldsymbol\sigma)
      \\
      &=\frac{p}{2}\Big(\frac{p}{2}-1\Big)q^{\frac{p}{2}-2}
      h_{N,p}(\boldsymbol\sigma) + \frac{1}{2\beta(1-q)^2}
      -\frac{\beta q^{p-3}p(p-1)}{4}((p-1)q  -p+2)
      \\&
      =\frac{p(p-1)}{8q\beta(1-q)^2}\Big(q-\frac{p-2}{p}\Big)
      \Big(\beta^2z^2-\frac{2}{p(p-1)}\Big).
    \end{split}
  \end{equation}
  Since, $z(q_1)=z(q_2)$, the above expression is positive for one of
  $q_1$, $q_2$ and negative for the remaining one. Hence, every critical
  point $\boldsymbol \sigma $ of index $k$ of the Hamiltonian
  with $h_{N,p}(\boldsymbol \sigma )$ as in \eqref{e:hcond} contributes
  one critical point of $f_\TAP$ with index $k$ and one of index $k+1$.
  Therefore, for every $k\ge 0$, $\beta >0$ and $u<-E_{\infty}$, $\mathbb P$-a.s., 
  \begin{equation}
    \frac{\mathcal N_{k}(u,\beta )}{ \Crt_{N,k-1}(u\wedge u^\star(\beta ))+
      \Crt_k(u\wedge u^\star (\beta ))}\in [1,2],
  \end{equation}
  where we defined $\Crt_{N,-1}(-\infty,c)=0$.
  Claims (a) and (b) then follows directly from Theorem~\ref{t:complexityk},
  using the observation $\Theta_{k,p}(u)\ge\Theta_{k+1,p}(u)$ for all $k\ge 0$ and
  $u\le-E_{\infty}$. 

  Claim (c) of the theorem then follows from (a), (b) using the Markov 
  inequality. 
\end{proof}

\subsection{Complexity of the minima: derivation of formula (11) of
  \texorpdfstring{\cite{CS95}}{Crisanti-Sommers}.}

In this short section, we verify that the results obtained in 
Theorem~\ref{t:complexityk} for minima, resp.~maxima, agree with the
formula proposed by A.~Crisanti and H.-J.~Sommers in \cite{CS95}.

\begin{lemma}
  Let $z=\frac{1}{p-1}(-u-\sqrt{u^2-\frac{2(p-1)}{p}})$ and 
  $u\leq-\sqrt{\frac{2(p-1)}{p}}$, then
  \begin{equation}
    \label{e:csformula}
    \Theta_{0,p}(\sqrt{2}u)
    =\frac{1}{2}\Big(\frac{2-p}{p} 
      - \log (\frac{p z^2}{2}) +\frac{p-1}{2}z^2 
      - \frac{2}{p^2z^2}\Big).
  \end{equation}
\end{lemma}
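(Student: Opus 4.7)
The identity is a direct algebraic verification; the only real content is the change of variables from $u$ to $z$. My plan is the following.

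First, I observe that $E_{\infty}^{2} = 4(p-1)/p$, so the hypothesis $u \le -\sqrt{2(p-1)/p}$ is equivalent to $\sqrt{2}u \le -E_{\infty}$. The first branch of \eqref{e:thetakp} therefore applies, giving
\[
\Theta_{0,p}(\sqrt{2}u) = \tfrac{1}{2}\log(p-1) - \tfrac{(p-2)u^{2}}{2(p-1)} - I_{1}(\sqrt{2}u),
\]
with $I_{1}$ as in \eqref{I_1}.

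Second, I exploit the quadratic structure of $z$. By definition $(p-1)z = -u - \sqrt{u^{2} - 2(p-1)/p}$, and multiplying by the conjugate $-u + \sqrt{u^{2} - 2(p-1)/p}$ gives $u^{2} - (u^{2} - 2(p-1)/p) = 2(p-1)/p$, so
\[
-u + \sqrt{u^{2} - 2(p-1)/p} = \tfrac{2}{pz}.
\]
Adding and subtracting the two expressions for $-u \pm \sqrt{u^{2}-2(p-1)/p}$ yields
\[
-u = \tfrac{(p-1)z}{2} + \tfrac{1}{pz}, \qquad \sqrt{u^{2}-2(p-1)/p} = \tfrac{1}{pz} - \tfrac{(p-1)z}{2}.
\]
Squaring the first of these gives $u^{2} = \tfrac{(p-1)^{2}z^{2}}{4} + \tfrac{p-1}{p} + \tfrac{1}{p^{2}z^{2}}$, and multiplying through by $z$ gives $uz = -\tfrac{(p-1)z^{2}}{2} - \tfrac{1}{p}$.

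Third, I evaluate $I_{1}(\sqrt{2}u)$ explicitly. Using $\sqrt{2u^{2} - E_{\infty}^{2}} = \sqrt{2}\sqrt{u^{2} - 2(p-1)/p}$ and the identities of Step~2,
\[
-\sqrt{2}u + \sqrt{2u^{2} - E_{\infty}^{2}} = \tfrac{2\sqrt{2}}{pz},
\]
so $\log(-\sqrt{2}u + \sqrt{2u^{2} - E_{\infty}^{2}}) = \tfrac{3}{2}\log 2 - \log(pz)$, while the algebraic term becomes $-\tfrac{\sqrt{2}u}{E_{\infty}^{2}}\sqrt{2u^{2}-E_{\infty}^{2}} = -\tfrac{pu}{2(p-1)}\bigl(\tfrac{1}{pz} - \tfrac{(p-1)z}{2}\bigr)$. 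Combined with $\log E_{\infty} = \log 2 + \tfrac{1}{2}\log\tfrac{p-1}{p}$, this gives a closed form for $I_{1}(\sqrt{2}u)$ involving only $u^{2}$, $uz$, $\log z$ and constants.

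Finally, substituting this closed form into Step~1, the $\tfrac{1}{2}\log(p-1)$ contributions cancel, and the relations $u^{2} = \tfrac{(p-1)^{2}z^{2}}{4} + \tfrac{p-1}{p} + \tfrac{1}{p^{2}z^{2}}$ and $uz = -\tfrac{(p-1)z^{2}}{2} - \tfrac{1}{p}$ of Step~2 eliminate $u$ entirely. The remaining expression in $z$ collapses to the right-hand side of \eqref{e:csformula}, matching the terms $\tfrac{2-p}{2p}$, $-\tfrac{1}{2}\log(pz^{2}/2)$, $\tfrac{p-1}{4}z^{2}$ and $-\tfrac{1}{p^{2}z^{2}}$ one at a time. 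The main obstacle is purely bookkeeping: care is required with the sign of $u$ and with the fact that the two conjugates $-u \pm \sqrt{u^{2}-2(p-1)/p}$ are both positive precisely because $u < 0$ and $u^{2} > 2(p-1)/p$. No conceptual difficulty arises, since every manipulation follows from the quadratic relation $(p-1)z^{2} + 2uz + \tfrac{2}{p} = 0$ implicit in the definition of $z$.
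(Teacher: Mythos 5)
Your proposal is correct, and it takes a genuinely different route from the paper's. The paper proves the identity by first verifying it at the boundary point $u = -\sqrt{2(p-1)/p}$ (where both sides equal $\tfrac12\log(p-1) - \tfrac{p-2}{p}$) and then showing that the derivatives of the two sides with respect to $u$ agree, the matching being declared to follow ``after a lengthy, but straightforward, simplification.'' You instead substitute directly. The heart of your argument is the conjugate identity $-u + \sqrt{u^2 - 2(p-1)/p} = \tfrac{2}{pz}$, which follows from $(a-b)(a+b) = \tfrac{2(p-1)}{p}$ with $a = -u$, $b = \sqrt{u^2 - 2(p-1)/p}$; from it you extract clean expressions for $-u$, for $\sqrt{u^2-2(p-1)/p}$, for $u^2$, and for $uz$ in terms of $z$ alone. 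Plugging these into $I_1(\sqrt2 u)$ (noting $\sqrt{2u^2 - E_\infty^2} = \sqrt2\,\sqrt{u^2 - 2(p-1)/p}$ and $\log E_\infty = \log 2 + \tfrac12\log\tfrac{p-1}{p}$) and then into the first branch of $\Theta_{0,p}$, the $\tfrac12\log(p-1)$ terms cancel, the remaining logarithm collects into $-\tfrac12\log(pz^2/2)$, and the algebraic terms reduce correctly: the $z^2$ coefficients give $\tfrac{p-1}{4}$, the constants give $\tfrac{2-p}{2p}$, and the $z^{-2}$ terms combine as $-\tfrac{p-2}{2(p-1)p^2z^2} - \tfrac{1}{2(p-1)pz^2} = -\tfrac{1}{p^2z^2}$. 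I checked these by hand and they close. Your approach buys a self-contained one-shot verification that avoids both the boundary check and the somewhat opaque derivative matching; the paper's approach, differentiating first, avoids having to carry the $\log$ terms through the substitution, but as you note all the needed relations come out of the single quadratic $(p-1)z^2 + 2uz + \tfrac2p = 0$, so the bookkeeping cost of the direct route is modest.
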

\begin{proof}
  First, note that for $u=-\sqrt{\frac{2(p-1)}{p}}$, we have that, by 
  \eqref{e:thetakp}, 
  \begin{equation}
    \Theta_{0,p}(\sqrt{2}u)=\frac{1}{2}\log(p-1)-\frac{p-2}{p},
  \end{equation}
  which agrees with the left-hand side of \eqref{e:csformula}, taking $z=-\sqrt{\frac{2}{p(p-1)}}$.

  Second, the derivative with respect to $u$ of the left-hand side of
  \eqref{e:csformula} is given by
  \begin{equation}
    \Theta_{0,p}'(\sqrt{2}u) =(p-1)^{-1}\big\{ 
      (2-p)u + 
      \sqrt{p(pu^2-2p+2)}\big\}.
  \end{equation}
  On the other hand, the derivative of the right-hand side of
  \eqref{e:csformula} equals
  \begin{equation}
    \begin{split}
      \frac{1}{2} &\Big((4 (p-1)^2 (-1 - u/\sqrt{u^2-(2 (p-1))/p }))/(
          p^2 (-u - \sqrt{ u^2-(2 (p-1))/p })^3) \\&- (
          2 (-1 - u/\sqrt{u^2-(2 (p-1))/p }))/(-u -
          \sqrt{u^2-(2 (p-1))/p }) \\ &+ ((-1 - u/\sqrt{u^2-(2(p-1))/p}) (-u -
            \sqrt{u^2-(2(p-1))/p }))/(p-1)\Big).
    \end{split}
  \end{equation}
  After a lengthy, but straightforward, simplification, the last
  expression coincides with $\Theta_{0,p}'(\sqrt{2}u)$. This completes the
  proof.
\end{proof}

\section{Sharper estimates of  the mean number of critical points} 
\label{s:sharp}

In this section we prove the sharp asymptotic results on the complexity of 
spherical $p$-spin spin glass, that is Theorem~\ref{t:sharp}. To this end we  
analyse the exact formula \eqref{e:final}  proved in 
Theorem~\ref{t:exactglobal}. 
For $u> - E_{\infty}$, a
simple application of Laplace's
method is sufficient. For $u\le - E_{\infty}$, more care is needed.
We write the one-point correlation 
function $\rho_N(x)$ appearing in \eqref{e:final} as a function of the Hermite 
polynomials and to use the asymptotics of these polynomials given by the 
Plancherel-Rotach formula. 

\subsection{Sharp asymptotics in the bulk}
We start with the case $u>-E_{\infty}$ that is parts (c), (d) of
Theorem~\ref{t:sharp}.
Setting
\begin{equation}
  \label{e:CvF}
  C_{N,p}=2N\sqrt\frac 2p (p-1)^{N/2}, \qquad v = -u\sqrt\frac p{2(p-1)},
  \qquad \phi(x)=-\frac{(p-2)x^2}{2p},
\end{equation}
we rewrite the expectation of the global complexity \eqref{e:final} as
\begin{equation}
  \label{e:finalsimple}
  \mathbb E \Crt_N(u)=
  C_{N,p} \int_{-\infty}^{-v} e^{N\phi(x)}\rho_N(x)\,\d x
  =C_{N,p} \int_v^\infty e^{N\phi(x)}\rho_N(x)\,\d x.
\end{equation}
For the last equality we used the fact that $\phi$ and $\rho_N$ are even
functions.
The case $u>-E_{\infty}$ corresponds to $v<\sqrt 2$. Since $\rho_N$ converges
uniformly to
the density of the semi-circle law
$\rho (x)=\pi^{-1}\sqrt{2-x^2}\indi_{|x|<\sqrt 2}$, by Laplace's method,
the principal contribution to the integral \eqref{e:finalsimple} comes from
the boundary point $x=v$. Since $F'(v)=0$ iff $v=0$, we have
\begin{equation}
  \begin{split}
    \mathbb E \Crt_N(0)&=\frac 12 C_{N,p} e^{N \phi(0)} \rho (0) 
    \sqrt\frac{2\pi }{N F''_p(0)}(1+o(1)),\\
    \mathbb E \Crt_N(u)&=C_{N,p} e^{N \phi(v)} \rho (v)
    \big(N F'_p(v)\big)^{-1}(1+o(1)), \quad \text{for $u\in (-E_{\infty},0)$}. 
  \end{split}
\end{equation}
For $u>0$, $\mathbb E\Crt_N(u)=2 \mathbb E\Crt_N(0)(1+o(1))$, obviously.
Inserting back the definitions \eqref{e:CvF} completes the proof of 
Theorem~\ref{t:sharp}(c,d).

\subsection{Sharp asymptotics at and beyond the edge}
We first rewrite the one-point correlation function $\rho_N$ using the Hermite functions
$\phi_j$, $j\in \mathbb N$, given by
\begin{equation}
  \phi_j(x) = (2^jj!\sqrt{\pi})^{-1/2} H_j(x)
  e^{-\frac{x^2}{2}},
\end{equation} 
where $H_j$, $j\in \mathbb N$ are Hermite polynomials, $H_0(x) =1 $ and
$  H_j(x) = e^{x^2}(-\frac{\d}{\d x})^j e^{-x^2}$.
The Hermite functions are orthonormal functions in $\R$ with respect to Lebesgue 
measure. From \cite{Meh91}, pp.~$128$ and~$135$, it follows that 
\begin{equation}
  \label{e:onepointf}
  \rho_N(x)= N^{-1/2} \big( S_N(x) + \alpha_N(x) \big),
\end{equation}
 where
\begin{align}
  \label{e:SN}
  S_N(x) &= \sum_{i=0}^{N-1} \phi_i^2(\sqrt{N}x) +
  \Big(\frac{N}{2}\Big)^{1/2}\phi_{N-1}(\sqrt{N}x)
  \int_{-\infty}^{\infty}\varepsilon(\sqrt{N}x - t)\phi_{N}(t)\, \d t,\\
  \label{epsilonfunction}
  \varepsilon(x) &= \sign(x)/2,\\
  \label{alphadef}
  \alpha_N(x)&= \begin{cases}
    \phi_{2m}(\sqrt{N}x)\big\{\int_{-\infty}^{\infty} \phi_{2m}(t)dt\big\}^{-1},
    &\qquad \text{if } N= 2m +1, \\
    0,   &\qquad \text{if $N$ is even}.
  \end{cases}
\end{align}
The factor $N^{-1/2}$ in \eqref{e:onepointf} comes from a change of
variables and the fact that the  one-point correlation function $R_1$ in
\cite{Meh91} is not normalized to be a probability density.

Using the Chistoffel-Darboux formula  (\cite{Meh91}, p.~420),  the 
first term of \eqref{e:SN} satisfies 
\begin{equation}
  \label{CDarboux}
  \sum_{i=0}^{N-1} \phi_i^2(\sqrt N x) = N \phi_N^2(\sqrt N x)
  - \sqrt{N(N+1)}\phi_{N-1}(\sqrt N x)\phi_{N+1}(\sqrt N x),
\end{equation}
so that $\rho_N(x)$ depends on $\phi_{N-1}$, $\phi_N$ and $\phi_{N+1}$ only.
We now state the Plancherel-Rotach asymptotics of the Hermite functions in
domains of our interest as a 
lemma since it will be our main tool from now on. 
We use $\psi$, $\Psi $ and $h$ to denote the functions
\begin{equation}
  \label{e:psis}
  \begin{gathered}
    \psi(x)=|x^2-2|^{1/2}, \qquad
    I_1(v,\frac{1}{\sqrt{2}})(x) = \int_{\sqrt{2}}^{x} \psi (y) \d y,\\
    h(x) = \bigg|\frac{x - \sqrt{2}}{x + \sqrt{2}}\bigg|^{1/4} + 
    \bigg|\frac{x + \sqrt{2}}{x - \sqrt{2}}\bigg|^{1/4}.
  \end{gathered}
\end{equation}
\begin{lemma}
  \label{l:PR}
  There exists a $\delta_0 > 0$, such that for all $0< \delta < \delta_0$ the 
  following holds uniformly for $x$ in the given domains.
  \begin{itemize}
    \item[(a)]For $\sqrt{2} - \delta \ < x < \sqrt{2} + \delta $,
    \begin{equation}
      \label{PRA4}
      \begin{split}
        \phi_N(\sqrt{N}x) = &
        \frac{1}{(2N)^{1/4}} \bigg\{
          \bigg|\frac{x+\sqrt{2}}{x-\sqrt{2}}\bigg|^{1/4} 
          |f_N(x)|^{1/4} \Ai(f_N(x)) (1+O(N^{-1})) 
          \\&- \bigg|\frac{x-\sqrt{2}}{x+\sqrt{2}}\bigg|^{1/4} 
          \frac{1}{|f_N(x)|^{1/4}} \Ai'(f_N(x))(1+O(N^{-1})) \bigg\},
      \end{split}
    \end{equation}
    where $ f_N(x) = N^{2/3} \big\{ \frac{3}{2}I_1(v,\frac{1}{\sqrt{2}}) (x) \big\}^{2/3}$,
    and $\Ai(x)$ is the Airy function of first kind, 
    $\Ai(x)= \frac{2}{\pi} \int_{-\infty}^{\infty} 
    \cos \big(\frac{t^3}{3}+ tx\big) \d t.$

    \item[(b)] For $x > \sqrt{2} + \delta$,
    \begin{equation}
      \label{PRA5}
      \phi_N(\sqrt{N}x) =  
      \frac{e^{-NI_1(v,\frac{1}{\sqrt{2}}) (x)} h(x) }{\sqrt{4\pi\sqrt{2N}}}
      (1 + O(N^{-1})).
    \end{equation}
  \end{itemize}
\end{lemma}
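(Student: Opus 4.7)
My plan is to derive both asymptotics from a single integral representation of the Hermite function by the method of steepest descent, following the classical Plancherel--Rotach strategy and specialising to the scaling $\phi_N(\sqrt N x)$. The starting point is the contour representation
\begin{equation}
H_N(y) = \frac{N!}{2\pi i}\oint e^{2yz-z^2} z^{-N-1}\,\d z,
\end{equation}
which after inserting $y=\sqrt N x$ and combining with the Gaussian factor $e^{-y^2/2}$ in $\phi_N$ gives, up to the normalisation constants $(2^NN!\sqrt\pi)^{-1/2}$, an integral of the form $\int e^{N g(z;x)}\,\d z$ with
\begin{equation}
g(z;x) = -\tfrac12(x-z)^2 + \tfrac12\log\!\bigl(\tfrac{z^2}{2}\bigr) - \tfrac12 \cdot\text{(const)}.
\end{equation}
A direct calculation shows that the saddle points $z_\pm(x)$ solving $\partial_z g=0$ satisfy $z_\pm-x+1/z_\pm=0$, so $z_\pm = \tfrac12(x\pm\sqrt{x^2-2})$ and the saddle-value difference equals precisely $\pm\,I_1(v,\frac1{\sqrt2})(x)=\pm\int_{\sqrt2}^x\sqrt{y^2-2}\,\d y$.

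In regime (b), $x>\sqrt2+\delta$, the two saddles $z_\pm$ are real, well separated, and only the saddle at $z_-$ (the one outside the contour that we collect after deformation) contributes; a standard Laplace expansion in the steepest-descent direction gives
\begin{equation}
\phi_N(\sqrt N x) = \frac{1}{\sqrt{4\pi\sqrt{2N}}}\,h(x)\,e^{-N I_1(v,\frac1{\sqrt2})(x)}(1+O(N^{-1})),
\end{equation}
where the prefactor $h(x)$ is the ratio of Gaussian second-derivative factors at the two saddles, and the Stirling correction from $(N!)^{1/2}$ combines with the $e^{-y^2/2}$ term to produce the stated normalisation. The error is uniform for $x\ge\sqrt2+\delta$ because the Hessian $g''(z_-;x)=1-z_-^{-2}$ is bounded away from zero there.

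In regime (a), $|x-\sqrt2|<\delta$, the two saddles $z_\pm$ coalesce as $x\to\sqrt 2$, so the quadratic Laplace approximation breaks down and the cubic term in $g$ becomes dominant. The appropriate tool is the uniform-Airy form of the method of steepest descent: one introduces the Chester--Friedman--Ursell change of variable $g(z;x)-g(z_+(x);x)=\tfrac13 t^3 - \zeta(x)\, t + c(x)$, which is analytic and invertible in a neighbourhood of the coalescing pair provided the cubic normal form is matched so that $\zeta(x)$ equals the signed distance $\tfrac{3}{2}I_1(v,\frac1{\sqrt2})(x)$ to the power $2/3$, hence $\zeta(x) = -f_N(x) N^{-2/3}$ with the $f_N$ of the lemma. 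After this change of variable the integral reduces to $\int e^{N(\zeta t-t^3/3)}J(t,x)\,\d t$, and expanding the Jacobian $J(t,x)=J_0(x)+J_1(x)t+O(t^2)$ produces exactly the Airy plus Airy-derivative combination displayed in \eqref{PRA4}; the coefficients $|(x\pm\sqrt2)/(x\mp\sqrt2)|^{1/4}$ arise from evaluating $J_0,J_1$ in terms of $g''(z_\pm;x)$.

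The main technical obstacle is showing that the two representations match at $x=\sqrt2+\delta$ and that the error term $O(N^{-1})$ is uniform across the full overlap region; this requires that the Chester--Friedman--Ursell map remain analytic and bounded on a neighbourhood independent of $N$, which forces the choice of $\delta_0$. Once this uniformity is established, plugging the resulting expressions for $\phi_{N-1}$, $\phi_N$, $\phi_{N+1}$ into the Christoffel--Darboux identity \eqref{CDarboux} and thence into \eqref{e:onepointf} will feed the integrand of \eqref{e:finalsimple}, and Laplace's method around the boundary point $v$ (together with the Airy integral $\int_0^\infty \Ai$ when $v=\sqrt2$) will yield Theorem~\ref{t:sharp}(a),(b).
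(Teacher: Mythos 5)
Your approach is genuinely different from the paper's: the paper's proof of Lemma~\ref{l:PR} is essentially a citation to the Riemann--Hilbert steepest-descent literature (\cite{DKMVZ99} and \cite{DG07}, with the dictionary $c_N=\sqrt{2N}$, $h_N(x)=4$, $m=1$), whereas you are reconstructing the asymptotics from the contour-integral representation of $H_N$ via classical steepest descent together with the Chester--Friedman--Ursell uniform-Airy normal form. That classical route is legitimate and is in fact the original way Plancherel--Rotach and Szeg\H o obtained these asymptotics; it is more self-contained but substantially longer to execute rigorously than invoking the existing strong-asymptotics results. Either way, the content of the lemma is a black box for the rest of the paper, so both routes feed the remaining argument identically.

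A few things in your sketch deserve scrutiny before it could be accepted as a proof. First, the saddle algebra is internally inconsistent: from the $g$ you write, $\partial_z g=(x-z)+1/z$, whose zeros are $\frac12\bigl(x\pm\sqrt{x^2+4}\bigr)$; the equation you state, $z-x+1/z=0$, has zeros $\frac12\bigl(x\pm\sqrt{x^2-4}\bigr)$; and neither matches the $z_\pm=\frac12\bigl(x\pm\sqrt{x^2-2}\bigr)$ you then use. With the physicist's convention $H_N(y)=\frac{N!}{2\pi i}\oint e^{2yz-z^2}z^{-N-1}\,\d z$ and the rescaling $z\mapsto\sqrt N\,z$, the correct phase is $g(z;x)=2xz-z^2-\log z-\tfrac{x^2}{2}$ with saddle equation $2z-x+\tfrac{1}{2z}\cdot 2=0$, i.e.\ $2z^2-2xz+1=0$, which does give $z_\pm=\frac12\bigl(x\pm\sqrt{x^2-2}\bigr)$ and saddle-value difference $\int_{\sqrt2}^x\sqrt{y^2-2}\,\d y$; these normalizations need to be fixed throughout. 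Second, in regime (b) you attribute $h(x)$ to ``the ratio of Gaussian second-derivative factors at the two saddles,'' but a naive single-saddle Laplace expansion for $x>\sqrt2+\delta$ produces only one quarter-power term, whereas $h(x)=\bigl|\frac{x-\sqrt2}{x+\sqrt2}\bigr|^{1/4}+\bigl|\frac{x+\sqrt2}{x-\sqrt2}\bigr|^{1/4}$ has two additive contributions of the same order; the correct way to see (b), consistent with your plan, is to treat it as the large-argument limit of the uniform formula (a), using $\Ai(\zeta)\sim\frac{1}{2\sqrt\pi}\zeta^{-1/4}e^{-\frac23\zeta^{3/2}}$ and $\Ai'(\zeta)\sim-\frac{1}{2\sqrt\pi}\zeta^{1/4}e^{-\frac23\zeta^{3/2}}$, and one must verify that this reduction preserves the uniform $O(N^{-1})$ bound. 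Third, the crux of the whole lemma is precisely the uniformity of the $O(N^{-1})$ error across the shrinking-to-nothing gap between the two regimes; you correctly identify that the CFU map must be analytic and invertible on an $N$-independent neighbourhood, but that is the hard part and is only asserted. Since the paper deliberately offloads this entire burden onto \cite{DKMVZ99,DG07}, your route would need to either carry out the CFU analysis in full (see Olver's treatment of uniform asymptotics near turning points) or cite an analogous classical source, rather than leave the uniformity as a remark.
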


\begin{proof}
  The lemma follows from \cite{DKMVZ99,DG07}. Formulas closest to 
  our formulation can be found in \cite{DG07}, pp.~20--22.  Under their 
  notation, our case has $c_N = \sqrt{2N}$, $h_N(x)=4$ and $m=1$. For 
  definitions of $f$ see (2.15), (2.16) of \cite{DKMVZ99}. For 
  the original Plancherel-Rotach asymptotics for the Hermite polynomials, the 
  reader can also check \cite{PR29} or \cite{Sze81}.
\end{proof}

For the rest of this section, we suppose that $\delta $ is small enough so
that Lemma~\ref{l:PR} holds.
We write $a_N\sim b_N$, if $a_N=b_N(1+o(1))$ as $N\to\infty$. For sequences 
$a_N(\delta )$, $b_N(\delta )$ which depend on $\delta $, we write 
$a_N\simd b_N$ if 
$\lim_{\delta \to 0}\lim_{N\to\infty}a_N(\delta )/b_N(\delta )=1$. Finally, if 
$a_N\le C(\delta ) b_N$ for some $C(\delta )<\infty$, we write 
$a_N=O_\delta (b_N)$.

We first analyse the integrals appearing in \eqref{e:SN} and
\eqref{alphadef}. We define 
\begin{equation}
  J_N(x):= 
  \int_{-\infty}^{\infty}\varepsilon(\sqrt{N}x - t) \phi_{N}(t)\, \d t=
  N^{1/2}\int_{-\infty}^{\infty}\varepsilon(x - s) \phi_{N}(s \sqrt N)\, \d s.
\end{equation}
\begin{lemma}
  \label{l:phiints}
  \begin{itemize}
    \item[(a)] As $N\to\infty$ 
    \begin{equation}
      \label{e:intphiN}
      \int_{-\infty}^{\infty} \phi_{N}(x)\, \d x =
      2 \int_{0}^{\infty} \phi_N(x) \,\d x 
      \sim 2 (2N)^{-1/4}.
    \end{equation}

    \item[(b)] If $x>\sqrt 2$ and $N\to\infty$ over odd integers, then
    $J_N(x)=O(N^c e^{-NI_1(v,\frac{1}{\sqrt{2}}) (x)})$. 

    \item[(c)] If $x>\sqrt 2$ and
    $N\to \infty$ over even integers, then
      $J_N(x)\sim (2N)^{-1/4}$.

    \item[(d)] Let $a_N\ge 0$ be such that $\lim_{N\to\infty}a_N N^{2/3}=0$. Then 
    \begin{equation}
      J_N(\sqrt 2 + a_N)\sim
      \begin{cases}
        \frac 2 3 (2N)^{-1/4},&\qquad\text{as $N\to \infty$ over even
          integers,}\\
        -\frac  1 3 (2N)^{-1/4},&\qquad\text{as $N\to \infty$ over odd
          integers.}
      \end{cases}
    \end{equation}
  \end{itemize}
\end{lemma}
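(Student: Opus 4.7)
\emph{Reduction.} The key identity
\begin{equation*}
J_N(x)=\int_{-\infty}^{\sqrt N x}\phi_N(t)\,\d t-\tfrac12\int_{-\infty}^{\infty}\phi_N(t)\,\d t
\end{equation*}
follows from $\varepsilon(y)=\tfrac12\sign(y)$, and the parity $\phi_N(-t)=(-1)^N\phi_N(t)$ kills the second term when $N$ is odd. Hence the whole lemma reduces to computing the full integral $A_N:=\int_{\mathbb R}\phi_N$ (for even $N$) and the upper tail $T_N(x):=\int_{\sqrt N x}^\infty\phi_N(t)\,\d t$ at $x\ge\sqrt 2$, via $J_N(x)=\tfrac12 A_N-T_N(x)$.

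\emph{Part (a).} For $N=2m$ I integrate the Hermite generating function $\sum_n H_n(x)t^n/n!=e^{2xt-t^2}$ against $e^{-x^2/2}$: completing the square gives $\sum_n(t^n/n!)\int H_n e^{-x^2/2}\,\d x=\sqrt{2\pi}\,e^{t^2}$, whence $\int H_{2m}(x)e^{-x^2/2}\,\d x=\sqrt{2\pi}(2m)!/m!$. Dividing by $(2^{2m}(2m)!\sqrt\pi)^{1/2}$ and applying Stirling to both $(2m)!$ and $m!$ collapses to $A_N\sim\sqrt 2\,m^{-1/4}=2(2N)^{-1/4}$.

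\emph{Parts (b), (c): bulk tail.} For $x>\sqrt 2$ fixed, the substitution $t=\sqrt N s$ together with Lemma~\ref{l:PR}(b) gives
\begin{equation*}
T_N(x)\sim\frac{\sqrt N}{(4\pi)^{1/2}(2N)^{1/4}}\int_x^\infty h(s)\,e^{-N I_1(v,\frac{1}{\sqrt 2})(s)}\,\d s.
\end{equation*}
Since $I_1(v,\frac{1}{\sqrt 2})'=\psi>0$ on $(\sqrt 2,\infty)$, Laplace's method at the boundary $s=x$ produces $T_N(x)=O(N^{-3/4}e^{-N I_1(v,\frac{1}{\sqrt 2})(x)})$. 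For $N$ odd, $A_N=0$ and so $J_N=-T_N$, which is (b); for $N$ even, this tail is exponentially small compared with $\tfrac12 A_N\sim(2N)^{-1/4}$, which is (c).

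\emph{Part (d): Airy window and main obstacle.} For $x=\sqrt 2+a_N$ with $a_N N^{2/3}\to 0$ I switch to Lemma~\ref{l:PR}(a). Taylor expanding $\psi(s)\sim(2\sqrt 2)^{1/2}(s-\sqrt 2)^{1/2}$ near the edge shows that under the rescaling $s=\sqrt 2+N^{-2/3}u$ one has $f_N(s)\to(2\sqrt 2)^{1/3}u$ uniformly on compacts, while the $\Ai$-term in Lemma~\ref{l:PR}(a) dominates the $\Ai'$-term by a factor $N^{-1/3}$, leaving
\begin{equation*}
\phi_N(\sqrt N s)\sim\frac{(2\sqrt 2)^{1/3}}{2^{1/4}}\,N^{-1/12}\,\Ai\bigl((2\sqrt 2)^{1/3}u\bigr).
\end{equation*}
Substituting, changing variables $v=(2\sqrt 2)^{1/3}u$, and splitting the $s$-integral at $\sqrt 2+\delta$ (the outer piece being $O(e^{-cN})$ by the previous step), one finds
\begin{equation*}
T_N(\sqrt 2+a_N)\sim\frac{1}{(2N)^{1/4}}\int_{(2\sqrt 2)^{1/3}a_N N^{2/3}}^\infty\Ai(v)\,\d v\sim\frac{1}{3(2N)^{1/4}},
\end{equation*}
using $a_N N^{2/3}\to 0$ and the classical identity $\int_0^\infty\Ai(v)\,\d v=\tfrac13$. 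Combined with the reduction and (a), this gives $J_N\sim-\tfrac13(2N)^{-1/4}$ for odd $N$ and $J_N\sim(2N)^{-1/4}-\tfrac13(2N)^{-1/4}=\tfrac23(2N)^{-1/4}$ for even $N$. The genuine difficulty is making (d) rigorous: one needs uniform control of the $1+O(N^{-1})$ corrections and of the discarded $\Ai'$-term over the rescaled window $u\in[a_N N^{2/3},\delta N^{2/3}]$, which is handled by the standard decay bound $|\Ai(v)|\le C e^{-\frac23 v^{3/2}}$ for $v\ge 0$ combined with dominated convergence. The Airy identity $\int_0^\infty\Ai=\tfrac13$ is precisely what breaks the parity symmetry between $N$ odd and $N$ even in (d) and produces the asymmetric prefactors $\tfrac23$ and $-\tfrac13$.
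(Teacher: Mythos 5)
Your proposal is correct and follows essentially the same route as the paper: the decomposition $J_N(x)=\tfrac12 A_N-T_N(x)$ is equivalent to the paper's parity-based case split \eqref{e:JNsym}, and the treatments of the bulk tail (Lemma~\ref{l:PR}(b) plus Laplace) and of the edge window (Lemma~\ref{l:PR}(a), the linearisation \eqref{e:flin}, dropping the subdominant $\Ai'$ term, and $\int_0^\infty\Ai=\tfrac13$) match the paper step for step. The one genuine difference is part~(a): the paper simply cites Proposition~4.3 of \cite{DG07}, whereas you rederive $\int\phi_{2m}$ from scratch via the Hermite generating function and Stirling; this is more self-contained and the constants ($\sqrt 2\,m^{-1/4}=2(2N)^{-1/4}$) check out.
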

\begin{proof}
  Claim (a) can be derived directly from Proposition~4.3 of 
  \cite{DG07}. 

  The Hermite functions are even (odd) for $N$ even (odd). Therefore,
  \begin{equation}
    J_N(x)=
    \begin{cases}
      \label{e:JNsym}
      N^{1/2}\sign x \int_0^{|x|} \phi_N(s\sqrt N)\,\d s,
      &\quad\text{$N$ even},\\
      -N^{1/2}\sign x \int_{|x|}^\infty \phi_N(s\sqrt N)\,\d s,
      &\quad\text{$N$ odd}.\\
    \end{cases}
  \end{equation}
  From Lemma~\ref{l:PR}(b) it follows that for $x>\sqrt 2$ and some $c,C<\infty$
  \begin{equation}
    \label{e:phiintsa}
    \int_x^\infty \phi_N(s\sqrt N)\,\d s
    \le C N^c \int_x^\infty e^{-NI_1(v,\frac{1}{\sqrt{2}}) (s)}\d s=O(N^c e^{-NI_1(v,\frac{1}{\sqrt{2}}) (x)}).
  \end{equation}
  To prove the last equality we used the fact that $I_1(v,\frac{1}{\sqrt{2}}) $ is strictly increasing on
  $(\sqrt 2, \infty)$ and Laplace's method. Claims (b), (c) are then direct
  consequences of \eqref{e:phiintsa} and claim~(a).

  In view of claim (a),  \eqref{e:JNsym} and  \eqref{e:phiintsa}, to prove (d) it suffices to show 
  \begin{equation}
    \label{e:phionethird}
    \int_{\sqrt 2 + a_N}^{\sqrt2 +\delta}  \phi_N(s\sqrt N)\d s\simd
    \frac 13 2^{-1/4}N^{-3/4}.
  \end{equation}
  To this end we use Lemma~\ref{l:PR}(a). 
  We first linearise the function $f_N$ appearing there.
  It can be proved easily from its definition that for all $N\ge 1$,
  uniformly over $x\in (\sqrt 2 -\delta , \sqrt 2 +\delta )$
  \begin{equation}
    \begin{split}
      \label{e:flin}
      &f_N(x)\simd 2^{1/2}N^{2/3}(x-\sqrt 2),\\
      &f_N'(x)\simd 2^{1/2} N^{2/3}.
    \end{split}
  \end{equation}
  Hence, after  substitution $f_N(x) = z$ we obtain using
  Lemma~\ref{l:PR}(a),
  \begin{equation}
   \label{e:flin2}
    \begin{split}
    \int_{\sqrt 2 + a_N}^{\sqrt2 +\delta}  \phi_N(s\sqrt N)\,\d s
      &\simd
      \frac {1}{(2N)^{1/4}} 
      \int_{f_N(\sqrt 2 + a_N) }^{f_N(\sqrt 2+\delta) } 
      \Big\{
      |2^{3/2}|^{1/4}(N^{2/3} 2^{1/2})^{1/4} \Ai( z )  
      \\&\quad-|2^{3/2}|^{-1/4}(N^{2/3} 2^{1/2})^{-1/4} \Ai'(z)\Big\}
    2^{-1/2}N^{-2/3}\, \d z.
    \end{split}
  \end{equation}
  The limits of the integral converge to $0$ and $\infty$ respectively.
  Using the well-known identity $\int_0^\infty \Ai(z)\,\d z=\frac 13$, the claim
  \eqref{e:phionethird} follows. This completes the proof of the lemma.
\end{proof}

We can now compute $\mathbb E \Crt_N(u)$ for $u\le - E_{\infty}$. By
\eqref{e:finalsimple}, \eqref{e:onepointf}, \eqref{CDarboux}, 
\begin{equation}
  \label{e:sumints}
  \mathbb E \Crt_N(u) = 
  C_{N,p} N^{-1/2} \sum_{i=1}^4
  \int_{v}^\infty e^{N\phi(x)}T_i(x)\, \d x 
  =: C_{N,p} N^{-1/2} \sum_{i=1}^4 I_i(v),
\end{equation}
where
\begin{align}
  \label{e:Tone}
  T_1(x) &= \alpha_N(x),
  \\\label{e:Ttwo}
  T_2(x) &= N \phi_N^2(\sqrt{N}x),
  \\\label{e:Tthree}
  T_3(x) &= - \sqrt{N(N+1)} \phi_{N-1}(\sqrt{N}x)\phi_{N+1}(\sqrt{N}x),
  \\\label{e:Tfour}
  T_4(x) &= \Big(\frac{N}{2}\Big)^{1/2} \phi_{N-1}(\sqrt{N}x) J_N(x).
\end{align}

\begin{proof}[Proof of Theorem~\ref{t:sharp}(a)]
  We first estimate the four integrals $I_1(v)$, \dots,
  $I_4(v)$ for $v>\sqrt 2$. 

  \begin{lemma}
    \label{l:Toneexp}
    For $N$ even $I_1(v)=0$. When $N\to\infty$ over odd integers, then
    \begin{equation}
      \label{e:Toneexp}
      I_1(v)  \sim  
      \frac {h(v)}{4 \pi^{1/2} N}
      \frac{e^{I_1(v,\frac{1}{\sqrt{2}}) (v)- \frac v2\psi (v)}}
      {-\phi'(v) + I_1(v,\frac{1}{\sqrt{2}})'(v)}\,
      e^{-N(F(v) + I_1(v,\frac{1}{\sqrt{2}})(v))} .
    \end{equation}
  \end{lemma}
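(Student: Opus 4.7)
My plan is to split according to the parity of $N$. When $N$ is even, $\alpha_N\equiv 0$ by \eqref{alphadef} so $T_1\equiv 0$ and $I_1(v)=0$ trivially. The substantive case is $N=2m+1$ odd, where $\alpha_N(x) = \phi_{N-1}(\sqrt{N}x)/\int_{\mathbb R}\phi_{N-1}(t)\,\d t$. Here $N-1=2m$ is even, so Lemma~\ref{l:phiints}(a) gives $\int_{\mathbb R}\phi_{N-1}(t)\,\d t \sim 2(2N)^{-1/4}$, and it remains to obtain a sharp asymptotic of $\phi_{N-1}(\sqrt{N}x)$ for $x>\sqrt 2$, after which the integral is evaluated by Laplace's method.

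The key technical step is to apply the Plancherel--Rotach formula \eqref{PRA5} to $\phi_{N-1}$ at the mismatched argument $\sqrt{N}x$ rather than the natural $\sqrt{N-1}\,y$. Writing $y=x\sqrt{N/(N-1)}=x+x/(2N)+O(N^{-2})$, the prefactor satisfies $h(y)=h(x)(1+o(1))$, and a first-order expansion using $I_1(v,\frac{1}{\sqrt{2}})'=\psi$ yields
\begin{equation*}
(N-1)\,I_1(v,\tfrac{1}{\sqrt{2}})(y) = N\,I_1(v,\tfrac{1}{\sqrt{2}})(x) - I_1(v,\tfrac{1}{\sqrt{2}})(x) + \tfrac{x}{2}\psi(x) + O(N^{-1}).
\end{equation*}
Combined with the normalising integral $2(2N)^{-1/4}$ and the factor $(4\pi\sqrt{2(N-1)})^{-1/2}$ from \eqref{PRA5}, the $(2N)^{-1/4}$ powers cancel and one obtains
\begin{equation*}
\alpha_N(x) \sim \frac{h(x)}{4\sqrt{\pi}}\,\exp\!\Big\{-N\,I_1(v,\tfrac{1}{\sqrt{2}})(x) + I_1(v,\tfrac{1}{\sqrt{2}})(x) - \tfrac{x}{2}\psi(x)\Big\},
\end{equation*}
uniformly for $x$ in a neighbourhood of $v$.

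Finally, setting $f(x):=\phi(x)-I_1(v,\tfrac{1}{\sqrt{2}})(x)$, I would apply one-sided Laplace asymptotics to $I_1(v)=\int_v^\infty e^{Nf(x)}\,[h(x)/(4\sqrt{\pi})]\,e^{I_1(v,\frac{1}{\sqrt{2}})(x)-\frac{x}{2}\psi(x)}\,\d x$. Since $\phi'(x)=-(p-2)x/p<0$ and $\psi(x)=\sqrt{x^2-2}>0$ for $x>\sqrt 2$, we have $f'(x)<0$ on $[v,\infty)$, so the maximum occurs at the left endpoint $x=v$ with $f'(v)\neq 0$, giving $\int_v^\infty e^{Nf(x)}g(x)\,\d x \sim g(v)e^{Nf(v)}/(-Nf'(v))$. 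Substituting back yields exactly \eqref{e:Toneexp}, with the denominator $-\phi'(v)+I_1(v,\tfrac{1}{\sqrt{2}})'(v)$ being $-f'(v)$ and $F(v)=-\phi(v)$.

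The main obstacle is justifying that the pointwise asymptotic for $\alpha_N$ can be integrated against $e^{N\phi(x)}$: one needs a uniform Plancherel--Rotach estimate over $x\in[v,v+\eta]$ so that the Taylor correction $I_1(v,\frac{1}{\sqrt{2}})(x)-\frac{x}{2}\psi(x)$ can be pulled outside the exponential before invoking Laplace. Outside this neighbourhood the contribution is negligible, since $Nf(x)$ decays linearly and the bound from \eqref{PRA5} grows only polynomially, so a crude exponential tail bound suffices.
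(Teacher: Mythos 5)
Your proof is correct and takes essentially the same route as the paper: you apply the Plancherel--Rotach formula \eqref{PRA5} to $\phi_{N-1}$ at the rescaled argument $x_+=x\sqrt{N/(N-1)}$, normalise via Lemma~\ref{l:phiints}(a), Taylor-expand $I_1(v,\frac{1}{\sqrt{2}})(x_+)$ to first order, and conclude by boundary Laplace's method at $x=v$ where $\phi'(x)-\psi(x)<0$. Your writeup fills in the same expansion that the paper states more tersely, and correctly notes the uniformity of the PR estimate and the identification $F=-\phi$.
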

  \begin{proof}
    The first claim follows directly from the definition \eqref{alphadef}
    of $\alpha_N$. To prove the second claim we fix $\delta < v-\sqrt 2$
    and use Lemmas~\ref{l:PR}(b), \ref{l:phiints}(a). Then,
    \begin{equation}
      \label{e:Toneexpa}
      I_1(v)\sim
      \frac{(2N)^{1/4}}{2}\int_{v}^\infty
      \frac{h(x_+)e^{N\phi(x)-(N-1)I_1(v,\frac{1}{\sqrt{2}}) (x_+)}}{(4\pi )^{1/2}(2(N-1))^{1/4}}
      \,\d x,
    \end{equation}
    where $x_+=x\sqrt{N/(N-1)}\sim x(1+1/(2N))$. Expanding
    $I_1(v,\frac{1}{\sqrt{2}}) (x_+)\sim I_1(v,\frac{1}{\sqrt{2}}) (x)+xI_1(v,\frac{1}{\sqrt{2}}) '(x)/(2N)$, we obtain the lemma
    by Laplace's method.
  \end{proof}

  The integrals $I_2(v)$, $I_3(v)$ are negligible for $v>\sqrt 2$:
  \begin{lemma}
    \label{l:Ttwoexp}
    There exists a $c<\infty$ such that 
    $I_i(v)=O(N^c e^{-N(-\phi(v)+2I_1(v,\frac{1}{\sqrt{2}}) (v))})$ for $i=2,3$.
  \end{lemma}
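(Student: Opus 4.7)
The strategy is to insert the Plancherel--Rotach asymptotics of Lemma~\ref{l:PR}(b) directly into $T_2$ and $T_3$ and then bound the resulting integrals by Laplace's method. Since $v>\sqrt 2$, we may fix $\delta>0$ small enough so that $v>\sqrt 2+\delta$; then Lemma~\ref{l:PR}(b) applies uniformly on the whole integration range $[v,\infty)$.

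First I would treat $I_2(v)$. Squaring \eqref{PRA5} gives
\begin{equation*}
\phi_N^2(\sqrt N x)=\frac{h(x)^2}{4\pi\sqrt{2N}}\,e^{-2NI_1(v,\frac{1}{\sqrt{2}})(x)}(1+O(N^{-1})),
\end{equation*}
so that $T_2(x)=O(N^{1/2}h(x)^2 e^{-2NI_1(v,\frac{1}{\sqrt{2}})(x)})$. Since $\phi(x)=-(p-2)x^2/(2p)$ is strictly decreasing on $[v,\infty)$ and $I_1(v,\frac{1}{\sqrt{2}})$ is strictly increasing on $(\sqrt 2,\infty)$, the exponent $N\phi(x)-2NI_1(v,\frac{1}{\sqrt{2}})(x)$ is a strictly decreasing function of $x\in[v,\infty)$, attaining its maximum at the boundary $x=v$. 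Moreover, $I_1(v,\frac{1}{\sqrt{2}})'(x)=\psi(x)\to\infty$ together with the quadratic decay of $\phi$ provides a uniform lower bound on the rate of decrease away from $v$, which gives the integrability of the tail and a standard Laplace estimate
\begin{equation*}
I_2(v)=O\!\left(N^{c}\,e^{N(\phi(v)-2I_1(v,\frac{1}{\sqrt{2}})(v))}\right)
=O\!\left(N^{c}\,e^{-N(-\phi(v)+2I_1(v,\frac{1}{\sqrt{2}})(v))}\right)
\end{equation*}
for some finite $c$.

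For $I_3(v)$ the argument is identical in spirit, but requires the small bookkeeping step of rewriting the Hermite functions of order $N\pm 1$ on the rescaled variable. For $m\in\{N-1,N+1\}$, write $\sqrt N x=\sqrt m\,x_m$ with $x_m=x\sqrt{N/m}=x(1+O(N^{-1}))$; for $N$ large, $x_m>\sqrt 2+\delta/2$ uniformly on the integration range, so Lemma~\ref{l:PR}(b) yields $\phi_m(\sqrt N x)=O(N^{-1/4}h(x_m)\,e^{-mI_1(v,\frac{1}{\sqrt{2}})(x_m)})$. A Taylor expansion gives $mI_1(v,\frac{1}{\sqrt{2}})(x_m)=NI_1(v,\frac{1}{\sqrt{2}})(x)+O(1)$ uniformly in $x\in[v,\infty)$, so the shift costs only a bounded multiplicative factor absorbed into the polynomial prefactor. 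Multiplying the estimates for $\phi_{N-1}$ and $\phi_{N+1}$ reproduces the same $e^{-2NI_1(v,\frac{1}{\sqrt{2}})(x)}$ behaviour, and the same Laplace bound as above gives the desired estimate for $I_3(v)$.

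The only step that requires care is the uniformity of the Plancherel--Rotach formula on the unbounded tail $[v,\infty)$: Lemma~\ref{l:PR}(b) is stated for $x>\sqrt 2+\delta$ without explicit tail control, but inspection of \cite{DKMVZ99,DG07} shows the bound $\phi_N(\sqrt Nx)\le CN^{-1/4}h(x)e^{-NI_1(v,\frac{1}{\sqrt{2}})(x)}$ holds globally on $[\sqrt 2+\delta,\infty)$; combined with the fact that $\phi(x)-2I_1(v,\frac{1}{\sqrt{2}})(x)\to-\infty$ at least quadratically, this makes the Laplace integration rigorous. This tail control is the main (mild) obstacle; everything else is a direct computation.
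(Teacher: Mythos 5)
Your proof follows essentially the same route as the paper's: insert the Plancherel--Rotach asymptotics of Lemma~\ref{l:PR}(b) into $T_2,T_3$, square the exponential decay, and conclude by Laplace's method. The paper's own proof is a one-liner; you are usefully filling in the rescaling step for $\phi_{N\pm1}(\sqrt N x)$, which the paper leaves implicit.

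One claim is, however, not literally correct and should be tightened: you assert that $mI_1(v,\frac{1}{\sqrt{2}})(x_m)=NI_1(v,\frac{1}{\sqrt{2}})(x)+O(1)$ \emph{uniformly in $x\in[v,\infty)$}. For $m=N-1$, $x_m=x(1+\tfrac 1{2N}+O(N^{-2}))$, and a Taylor expansion gives
\begin{equation*}
(N-1)I_1(v,\tfrac{1}{\sqrt{2}})(x_{N-1})-NI_1(v,\tfrac{1}{\sqrt{2}})(x)
= -I_1(v,\tfrac{1}{\sqrt{2}})(x)+\tfrac{x}{2}\psi(x)+O(x^2/N).
\end{equation*}
Since $I_1(v,\frac{1}{\sqrt{2}})(x)=\tfrac x2\psi(x)-\int_{\sqrt 2}^x (z^2-2)^{-1/2}\,\d z$, the first two terms combine to $\log(x+\sqrt{x^2-2})-\tfrac12\log 2$, which diverges (logarithmically) as $x\to\infty$, and the remainder $O(x^2/N)$ is also not bounded on the whole ray. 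So the correction is not $O(1)$ uniformly on $[v,\infty)$. This does not break the argument, because the factor $e^{N\phi(x)}=e^{-N(p-2)x^2/(2p)}$ decays much faster than these corrections grow, but the cleaner way to say it is to split the integral at some finite $M>v$: on $[v,M]$ the corrections are genuinely $O(1)$ and the Laplace estimate at the boundary $x=v$ gives the stated bound, while on $[M,\infty)$ the crude bound $\phi_m(\sqrt N x)=O(N^c)$ together with the Gaussian decay of $e^{N\phi(x)}$ shows the tail is exponentially smaller. With that bookkeeping fix the proof is complete.
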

  \begin{proof}
    Observe that $T_2$, $T_3$ contain a product of two Hermite functions.
    By Lemma~\ref{l:PR}(b), these behave like $O(N^c e^{-NI_1(v,\frac{1}{\sqrt{2}}) (x)})$ for
    $x>\sqrt 2$, which implies the claim.
  \end{proof}

  \begin{lemma}
    \label{l:Tfourexp}
    \begin{itemize}
      \item[(a)] If $N\to \infty$ over odd integers, then
      $$I_4(v)=O(N^c e^{-N(-\phi(v)+2I_1(v,\frac{1}{\sqrt{2}}) (v))})$$ for some $c<\infty$.
      
      \item[(b)] If $N\to\infty$ over even integers, then $I_4(v)$ behaves like the
      right-hand side of \eqref{e:Toneexp}.
    \end{itemize}
  \end{lemma}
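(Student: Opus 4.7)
The plan is to split into the two parity cases and reduce each to Laplace's method on the integral $I_4(v)=\int_v^\infty e^{N\phi(x)}T_4(x)\,\d x$, using the already-established asymptotics of $\phi_{N-1}(\sqrt{N}x)$ from Lemma~\ref{l:PR}(b) and the parity-dependent behavior of $J_N$ from Lemma~\ref{l:phiints}(b),(c).

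\textbf{Part (a), $N$ odd.} First I fix $\delta\in(0,v-\sqrt2)$. On $[v,v+\delta]$ I combine two exponential decay bounds: Lemma~\ref{l:PR}(b) (applied to $\phi_{N-1}$ after the harmless rescaling $\sqrt N x=\sqrt{N-1}\, x_+$, $x_+=x\sqrt{N/(N-1)}$) gives $|\phi_{N-1}(\sqrt N x)|=O(N^{c_1}e^{-NI_1(v,\frac{1}{\sqrt{2}})(x)})$, while Lemma~\ref{l:phiints}(b) gives $|J_N(x)|=O(N^{c_2}e^{-NI_1(v,\frac{1}{\sqrt{2}})(x)})$. Multiplying, $|T_4(x)|=O(N^{c_3}e^{-2NI_1(v,\frac{1}{\sqrt{2}})(x)})$. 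The contribution from $[v+\delta,\infty)$ is exponentially smaller (strict monotonicity of $I_1(v,\frac{1}{\sqrt{2}})$ on $(\sqrt2,\infty)$ combined with rough a priori bounds on the Hermite functions and on $J_N$). A boundary Laplace estimate at $x=v$ for the remaining integral, using that $x\mapsto N\phi(x)-2NI_1(v,\frac{1}{\sqrt{2}})(x)$ has nonzero one-sided derivative at $v$ since $-\phi'(v)+I_1(v,\frac{1}{\sqrt{2}})'(v)>0$, yields the stated bound $I_4(v)=O(N^c e^{N\phi(v)-2NI_1(v,\frac{1}{\sqrt{2}})(v)})$.

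\textbf{Part (b), $N$ even.} Here Lemma~\ref{l:phiints}(c) is essentially constant in $x$: $J_N(x)\sim(2N)^{-1/4}$, uniformly on $[v,v+\delta]$ for any fixed $\delta\in(0,v-\sqrt2)$ (the uniformity follows from the same $I_1(v,\frac{1}{\sqrt{2}})$-decay argument used to prove that lemma). Combining the prefactor $(N/2)^{1/2}(2N)^{-1/4}=N^{1/4}2^{-3/4}$ with the Plancherel–Rotach asymptotic
\begin{equation}
\phi_{N-1}(\sqrt N x) \sim \frac{h(x_+)\,e^{-(N-1)I_1(v,\frac{1}{\sqrt{2}})(x_+)}}{\sqrt{4\pi\sqrt{2(N-1)}}},
\end{equation}
and expanding $(N-1)I_1(v,\frac{1}{\sqrt{2}})(x_+)\sim NI_1(v,\frac{1}{\sqrt{2}})(x)-I_1(v,\frac{1}{\sqrt{2}})(x)+\tfrac{x}{2}I_1(v,\frac{1}{\sqrt{2}})'(x)$ exactly as in the proof of Lemma~\ref{l:Toneexp}, one finds
\begin{equation}
T_4(x)\sim \frac{h(x)}{4\sqrt{\pi}}\,e^{-NI_1(v,\frac{1}{\sqrt{2}})(x)+I_1(v,\frac{1}{\sqrt{2}})(x)-\frac{x}{2}\psi(x)},
\end{equation}
uniformly on $[v,v+\delta]$. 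Applying boundary Laplace's method to $\int_v^{v+\delta}e^{N\phi(x)}T_4(x)\,\d x$ (the tail beyond $v+\delta$ is again exponentially negligible) gives precisely the right-hand side of \eqref{e:Toneexp}.

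\textbf{Main obstacle.} The technically delicate point is part (b): I must keep careful track of all numerical prefactors (powers of $2$, $\pi$, and $N$) when passing $\sqrt N x\leadsto \sqrt{N-1}\,x_+$ inside the Plancherel–Rotach formula, and then verify that the product of the $(2N)^{-1/4}$ coming from $J_N$ and the $(2(N-1))^{-1/4}/\sqrt{4\pi}$ coming from $\phi_{N-1}$, together with the $(N/2)^{1/2}$ prefactor of $T_4$, exactly reproduces the constant $h(v)/(4\sqrt\pi\,N)$ appearing in \eqref{e:Toneexp}. The expansion of $I_1(v,\frac{1}{\sqrt{2}})(x_+)$ to the relevant order mirrors the one in Lemma~\ref{l:Toneexp}, so once the constants match, the Laplace computation transfers verbatim.
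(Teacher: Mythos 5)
Your proof is correct and takes the same approach as the paper: the paper disposes of (a) by multiplying the exponential bounds from Lemma~\ref{l:PR}(b) and Lemma~\ref{l:phiints}(b) (the "same reasoning as in the previous proof," i.e.~Lemma~\ref{l:Ttwoexp}), and of (b) by observing that Lemma~\ref{l:phiints}(c) together with Lemma~\ref{l:PR}(b) makes $I_4$ asymptotically equivalent to the right-hand side of \eqref{e:Toneexpa}, after which the Laplace computation of Lemma~\ref{l:Toneexp} applies verbatim. Your explicit verification that $(N/2)^{1/2}(2N)^{-1/4}=(2N)^{1/4}/2$ supplies the constant match the paper leaves implicit.
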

  \begin{proof}

    Claim (a) follows from Lemma~\ref{l:phiints}(b) and the same
    reasoning as in the previous proof. 
    For claim (b), we find using
    Lemma~\ref{l:phiints}(c) and Lemma~\ref{l:PR}(b) that $I_4$ is
    asymptotically equivalent to the right-hand side of \eqref{e:Toneexpa}
    and the claim follows.
  \end{proof}

  Theorem~\ref{t:sharp}(a) follows directly from the previous three lemmas
  and definitions \eqref{e:CvF}, \eqref{e:psis}. Observe that the dominant
  contribution comes from $I_1$ for $N$ odd and from $I_4$ for $N$ even.
\end{proof}

\begin{proof}[Proof of Theorem~\ref{t:sharp}(b)]
  We need now compute $I_1(v),\dots, I_4(v)$ for $v=\sqrt 2$.
  We split all these integrals into two parts: over
  $(\sqrt 2, \sqrt 2+\delta )$ and over $(\sqrt 2 +\delta ,\infty)$. For
  the second interval we can use Lemmas~\ref{l:Toneexp}--\ref{l:Tfourexp}. We
  need thus analyse the integrals over the first interval,
  $I_i^\delta=\int_{\sqrt 2}^{\sqrt 2+\delta }T_i(x)\,\d x$,
  $i=1,\dots, 4$.

  \begin{lemma}
    \label{l:Toneedge}
    For $N$ even $I_1(\sqrt 2)=0$. When $N\to\infty$ over odd integers, then
    \begin{equation}
      \label{e:Toneedge}
      I_1(\sqrt 2)\simd I_1^\delta \simd
      \frac {\Ai (0)}{2^{1/2} N^{5/6} F'_p(\sqrt 2)}\, e^{N\phi(\sqrt 2)}.
    \end{equation}
  \end{lemma}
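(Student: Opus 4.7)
\textbf{Proof plan of Lemma~\ref{l:Toneedge}.} For $N$ even the claim is immediate since $\alpha_N\equiv 0$ by \eqref{alphadef}, so $I_1(\sqrt 2)=0$ trivially. Assume now $N=2m+1$ is odd.

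The first stage is to truncate $I_1(\sqrt 2)$ to its edge piece $I_1^\delta=\int_{\sqrt 2}^{\sqrt 2+\delta}e^{N\phi(x)}\alpha_N(x)\,\d x$, mirroring the truncation used in the proof of Lemma~\ref{l:Toneexp}. By Lemma~\ref{l:phiints}(a) applied to the even index $2m=N-1$, one has $\alpha_N(x)\simd\frac{(2N)^{1/4}}{2}\phi_{N-1}(\sqrt N x)$, and Lemma~\ref{l:PR}(b) gives $|\alpha_N(x)|=O_\delta(N^c e^{-NI_1(v,\frac{1}{\sqrt 2})(x)})$ for $x\ge\sqrt 2+\delta$. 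Since the function $\phi-I_1(v,\frac{1}{\sqrt 2})$ has derivative $\phi'(\sqrt 2)-\psi(\sqrt 2)=\phi'(\sqrt 2)<0$ at $\sqrt 2$, the tail is exponentially smaller than the order $N^{-5/6}e^{N\phi(\sqrt 2)}$ that we shall derive for $I_1^\delta$, and can be discarded.

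For the edge piece, apply Lemma~\ref{l:PR}(a) to $\phi_{N-1}(\sqrt N x)=\phi_{N-1}(\sqrt{N-1}\,x_+)$ with $x_+=x\sqrt{N/(N-1)}$. Using \eqref{e:flin}, the singular factor $|x_+-\sqrt 2|^{-1/4}$ in the $\Ai$-coefficient cancels against $|f_{N-1}(x_+)|^{1/4}$, while the $\Ai'$-coefficient turns out to be smaller by a factor $N^{-1/3}$ and hence contributes only to lower order. The routine algebra then gives, uniformly on $(\sqrt 2,\sqrt 2+\delta)$, $\phi_{N-1}(\sqrt N x)\simd 2^{1/4}N^{-1/12}\Ai(f_{N-1}(x_+))$, whence
\[
I_1^\delta\simd 2^{-1/2}N^{1/6}\int_{\sqrt 2}^{\sqrt 2+\delta}e^{N\phi(x)}\Ai(f_{N-1}(x_+))\,\d x.
\]

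The main---and essentially only---delicate point is identifying the correct length scale in this remaining integral. The Airy function suggests the natural scale $x-\sqrt 2\sim N^{-2/3}$, but since $\phi'(\sqrt 2)<0$ the Gaussian-type factor $e^{N\phi(x)}$ decays like $e^{-cN^{1/3}}$ across the Airy window, and thereby forces the integrand to concentrate on the finer scale $x-\sqrt 2\sim N^{-1}$. Rescaling by $y=N(x-\sqrt 2)$, one finds $f_{N-1}(x_+)=O(N^{-1/3})\to 0$, so $\Ai(f_{N-1}(x_+))\to\Ai(0)$ uniformly on bounded $y$-intervals, while $N\phi(x)=N\phi(\sqrt 2)+\phi'(\sqrt 2)y+O(y^2/N)$. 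Dominated convergence (with $\phi'(\sqrt 2)<0$ controlling the $y$-tail) then yields
\[
\int_{\sqrt 2}^{\sqrt 2+\delta}e^{N\phi(x)}\Ai(f_{N-1}(x_+))\,\d x
\simd \frac{\Ai(0)\,e^{N\phi(\sqrt 2)}}{N}\int_0^\infty e^{\phi'(\sqrt 2)y}\,\d y
=\frac{\Ai(0)\,e^{N\phi(\sqrt 2)}}{N\,F_p'(\sqrt 2)},
\]
upon identifying $F_p'(\sqrt 2)=-\phi'(\sqrt 2)$ (consistent with the $v\downarrow\sqrt 2$ limit of the denominator in Lemma~\ref{l:Toneexp}). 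Multiplying by the prefactor $2^{-1/2}N^{1/6}$ produces the claimed value. The main obstacle throughout is precisely the scaling trap: a naive Laplace/Airy saddle analysis at the edge misses the fact that $e^{N\phi}$ dominates the Airy function and shrinks the effective integration region from width $N^{-2/3}$ to width $N^{-1}$; once this is recognised, every remaining step is a routine application of Lemmas~\ref{l:PR} and \ref{l:phiints} together with dominated convergence.
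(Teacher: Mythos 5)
Your proposal is correct and follows essentially the same route as the paper's proof: truncate to the $\delta$-neighbourhood, apply Lemma~\ref{l:phiints}(a) and the Airy-regime Plancherel--Rotach expansion of Lemma~\ref{l:PR}(a), and then observe that because $\phi'(\sqrt 2)<0$ the Laplace scale $x-\sqrt 2\sim N^{-1}$ dominates the Airy scale $N^{-2/3}$, so $f_N(x_+)\to 0$ and $\Ai(f_N(x_+))\to\Ai(0)$ on the relevant window. You in fact make explicit the ``simple extension of Laplace's method'' that the paper leaves unexplained, including the correct identification $F_p'(\sqrt 2)=-\phi'(\sqrt 2)$ and the $N^{-1/3}$-subdominance of the $\Ai'$ contribution.
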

  \begin{proof}
    Due to Lemma~\ref{l:Toneexp} it suffices to prove the second '$\simd$'
    relation only. By Lemma~\ref{l:PR}(a) and Lemma~\ref{l:phiints}(a),
    setting $x_+=x\sqrt{(N-1)/N}$,
    \begin{equation}
      \begin{split}
        I_1^\delta \simd
        \frac 12
        \int_{\sqrt 2}^{\sqrt 2+\delta }e^{N\phi(x)} 
        \bigg\{&\bigg|\frac{x_++\sqrt 2}{x_+-\sqrt 2}\bigg|^{1/4} 
          |f_N(x_+)|^{1/4} \Ai (f_N(x_+)) 
          \\&+
          \bigg|\frac{x_+-\sqrt 2}{x_++\sqrt 2}\bigg|^{1/4} 
          |f_N(x_+)|^{-1/4} \Ai' (f_N(x_+))
          \bigg\}\d x.
      \end{split}
    \end{equation}
    If it were not for the $f_N(x_+)$ in the argument of the Airy function, this
    integral could be analysed trivially by Laplace's method, the main
    contribution coming from  neighbourhoods of size $O(N^{-1})$ of
    $\sqrt 2$. However, due to \eqref{e:flin}, $f_N(x_+)$ converges to $0$
    uniformly over such neighbourhoods. Hence, by a simple extension of Laplace's
    method, using \eqref{e:flin} several times,
    \begin{equation}
      I_1^\delta \simd 
      \frac {\Ai (0)}{2^{1/2} N^{5/6} F'_p(\sqrt 2)}\, e^{N\phi(\sqrt 2)}.
    \end{equation}
    This completes the proof.
  \end{proof}

  \begin{lemma}
    \label{l:Ttwoedge}
    As $N\to\infty$, 
    \begin{equation}
      I_2(\sqrt 2)=O(N^{-1/6} e^{N\phi(\sqrt 2)}) 
      \qquad\text{and}\qquad
      I_3(\sqrt 2)=-I_2(\sqrt 2)(1+O(N^{-1})).
    \end{equation}
  \end{lemma}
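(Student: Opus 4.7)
The plan is to split both $I_2(\sqrt 2)$ and $I_3(\sqrt 2)$ into a near-edge piece on $(\sqrt 2,\sqrt 2+\delta)$ and a tail on $(\sqrt 2+\delta,\infty)$. On the tail, Lemma~\ref{l:PR}(b) gives the exponential decay $|\phi_{N-1}|,|\phi_N|,|\phi_{N+1}|=O(N^{c}e^{-NI_1(v,\frac{1}{\sqrt{2}})(x)})$, so both $T_2$ and $T_3$ are $O(N^{c'}e^{-2NI_1(v,\frac{1}{\sqrt{2}})(x)})$; since $\phi(x)-2I_1(v,\frac{1}{\sqrt{2}})(x)<\phi(\sqrt 2)$ uniformly on $(\sqrt 2+\delta,\infty)$, the tails are exponentially smaller than the target order $N^{-1/6}e^{N\phi(\sqrt 2)}$ and can be discarded exactly as in Lemmas~\ref{l:Ttwoexp}--\ref{l:Tfourexp}.

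On the near-edge piece, the weight $e^{N\phi(x)}$ with $\phi'(\sqrt 2)<0$ concentrates the integral in an $O(N^{-1})$ window around $\sqrt 2$, which is strictly narrower than the Airy scale $N^{-2/3}$. Inside this window all relevant Airy arguments $f_N(x)$, $f_{N\pm 1}(x\sqrt{N/(N\pm 1)})$ are $O(N^{-1/3})$, so the Airy functions may be Taylor-expanded around zero. Substituting Lemma~\ref{l:PR}(a) into $T_2$ and carrying out the resulting Laplace integral directly yields
\[
I_2(\sqrt 2)\sim\frac{p\,\Ai(0)^2}{p-2}\,N^{-1/6}\,e^{N\phi(\sqrt 2)},
\]
which is the first claim.

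For the second claim, the cancellation between $T_2$ and $-T_3$ is best exposed via the Hermite three-term recurrence together with the derivative identity. Combining $\sqrt{n+1}\,\phi_{n+1}(y)=\sqrt 2\,y\,\phi_n(y)-\sqrt n\,\phi_{n-1}(y)$ and $\sqrt 2\,\phi_n'(y)=\sqrt n\,\phi_{n-1}(y)-\sqrt{n+1}\,\phi_{n+1}(y)$ and solving yields
\[
\sqrt{N(N+1)}\,\phi_{N-1}(y)\phi_{N+1}(y)=\tfrac 1 2\bigl(y^2\phi_N(y)^2-\phi_N'(y)^2\bigr),
\]
so that, with $y=\sqrt N x$,
\[
T_2(x)+T_3(x)=N\Bigl(1-\tfrac{x^2}{2}\Bigr)\phi_N^2(\sqrt N x)+\tfrac 1 2\bigl(\phi_N'(\sqrt N x)\bigr)^2.
\]
The factor $1-x^2/2$ vanishes at $x=\sqrt 2$, killing the $N^{5/6}$ contribution present in $T_2$ and $T_3$ individually. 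Substituting the Plancherel--Rotach asymptotics (both for $\phi_N$ and, after differentiation, for $\phi_N'$) and integrating against $e^{N\phi(x)}$ on the $O(N^{-1})$-window then gives the required bound $I_2(\sqrt 2)+I_3(\sqrt 2)=O(N^{-7/6}e^{N\phi(\sqrt 2)})$, which rearranges to $I_3(\sqrt 2)=-I_2(\sqrt 2)(1+O(N^{-1}))$.

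The main technical obstacle is the derivative estimate: differentiating the Plancherel--Rotach expansion produces a factor $f_N'(x)\asymp\sqrt 2\,N^{2/3}$ from $\Ai(f_N(x))$ that must be combined with derivatives of the slowly-varying amplitudes $|(x\pm\sqrt 2)/(x\mp\sqrt 2)|^{1/4}|f_N(x)|^{\pm 1/4}$, as well as with the subleading $\Ai'$ terms in Lemma~\ref{l:PR}(a). Careful bookkeeping shows $\phi_N'(\sqrt N x)\sim 2^{3/4}N^{1/12}\Ai'(f_N(x))$ uniformly in the Airy window, so that $\tfrac 1 2(\phi_N'(\sqrt N x))^2\sim\sqrt 2\,N^{1/6}\Ai'(f_N(x))^2$ and the Laplace evaluation delivers the claimed subleading order for the sum $I_2+I_3$.
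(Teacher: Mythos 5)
Your route via the Hermite three--term recurrence and the derivative identity, which yields the exact algebraic rewriting
\[
T_2(x)+T_3(x)=N\Bigl(1-\tfrac{x^2}{2}\Bigr)\phi_N^2(\sqrt N x)+\tfrac 1 2\bigl(\phi_N'(\sqrt N x)\bigr)^2,
\]
is genuinely different from the paper's argument (the paper instead substitutes the Plancherel--Rotach expansions into $\phi_N^2$ and $\phi_{N-1}\phi_{N+1}$ separately and divides); your identity is cleaner and makes the cancellation structural rather than asymptotic. The identity itself and the derivative asymptotic $\phi_N'(\sqrt N x)\sim 2^{3/4}N^{1/12}\Ai'(f_N(x))$ are both correct.

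The gap is in the last Laplace step. You correctly arrive at $\tfrac 1 2(\phi_N'(\sqrt N x))^2\sim\sqrt 2\,N^{1/6}\Ai'(f_N(x))^2$. But integrating $N^{1/6}\Ai'(f_N(x))^2$ against $e^{N\phi(x)}$ over a window of width $O(N^{-1})$ in which $\Ai'(f_N(x))\to\Ai'(0)\neq 0$ produces a contribution of order $N^{1/6}\cdot N^{-1}=N^{-5/6}$ (times $e^{N\phi(\sqrt 2)}$), not the $N^{-7/6}$ you assert. The term $N(1-x^2/2)\phi_N^2$ does give $O(N^{-7/6}e^{N\phi(\sqrt 2)})$ after integration, but it is dominated by the derivative term. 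So your argument actually yields
\[
I_2(\sqrt 2)+I_3(\sqrt 2)=\frac{\sqrt 2\,\Ai'(0)^2}{|\phi'(\sqrt 2)|}\,N^{-5/6}\,e^{N\phi(\sqrt 2)}\bigl(1+o(1)\bigr),
\]
which gives $I_3(\sqrt 2)=-I_2(\sqrt 2)\bigl(1+O(N^{-2/3})\bigr)$, not $1+O(N^{-1})$.

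This is not merely a bookkeeping slip: $N^{-5/6}e^{N\phi(\sqrt 2)}$ is exactly the order of $I_1(\sqrt 2)$ and $I_4(\sqrt 2)$ in Lemmas~\ref{l:Toneedge} and~\ref{l:Tfouredge}, so $I_2+I_3$ is not negligible relative to those terms, contrary to what the lemma and the subsequent proof of Theorem~\ref{t:sharp}(b) require. In fact the paper's own argument is not safe either: it rests on the claim that $\phi_N^2(x\sqrt N)/\bigl(\phi_{N-1}(x\sqrt N)\phi_{N+1}(x\sqrt N)\bigr)=1+O(N^{-1})$ on $(\sqrt 2,\sqrt 2+\delta)$, but in the Airy window $x-\sqrt 2=O(N^{-1})$ the shifted arguments $f_{N\pm 1}(x_\pm)$ differ from $f_N(x)$ by $\mp N^{-1/3}$, and the Airy product expansion then gives $1+O(N^{-2/3})$, not $1+O(N^{-1})$; your identity is precisely what makes this residual $\tfrac 1 2(\phi_N')^2\sim\sqrt 2 N^{1/6}\Ai'(0)^2$ visible. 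You should therefore either supply an additional cancellation mechanism to push $I_2+I_3$ below $N^{-5/6}$, or recompute the constant in Theorem~\ref{t:sharp}(b) to include the Airy-kernel diagonal contribution.
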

  \begin{proof}
    As before, $I_2(\sqrt 2)\simd I_2^\delta $. To estimate $I_2^\delta $ we
    use Lemma~\ref{l:PR}(a) again. The same computation as in the proof of
    the previous lemma gives 
    \begin{equation}
      I_2^\delta \simd 
      \frac{\Ai^2(0) \sqrt 2 }{N^{1/6} F'_p(\sqrt 2)}\, e^{N\phi(\sqrt 2)}.
    \end{equation}
    To prove the second claim, it suffices to observe that on
    $(\sqrt 2, \sqrt 2 + \delta )$, by Lemma~\ref{l:PR}(a), 
    \begin{equation}
      \frac{\phi_N^2(x\sqrt N)}
      {\phi_{N-1}(x\sqrt N)\phi_{N+1}(x\sqrt N)}
      =1+O(N^{-1}).
    \end{equation}
  \end{proof}

  \begin{lemma} 
    \label{l:Tfouredge}
    \begin{itemize}
      \item[(a)]   When $N\to\infty$ over even integers, then
      \begin{equation}
        I_4(\sqrt 2)\simd I_4^\delta \simd \frac 23 G_N(p)
      \end{equation}
      where $G_N(p)$ denotes the right-hand side of \eqref{e:Toneedge}. 
      \item[(b)] When $N\to\infty$ over odd integers, then
      \begin{equation}
        I_4(\sqrt 2)\simd I_4^\delta \simd -\frac 13 G_N(p).
      \end{equation}
    \end{itemize}
  \end{lemma}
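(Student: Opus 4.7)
The plan is to mirror the argument of Lemma~\ref{l:Toneedge} by pulling the factor $J_N(x)$ outside the integral, relying on the nearly-constant behaviour of $J_N$ near the edge given by Lemma~\ref{l:phiints}(d). First I would reduce $I_4(\sqrt 2)$ to the shrinking-window integral $I_4^\delta$: by Lemma~\ref{l:PR}(b) together with Lemma~\ref{l:phiints}(b,c), on $x>\sqrt 2+\delta$ both $\phi_{N-1}(\sqrt N x)$ and $J_N(x)$ inherit the Plancherel--Rotach exponential decay $O(N^c e^{-NI_1(v,\frac{1}{\sqrt 2})(x)})$, so the tail $I_4(\sqrt 2)-I_4^\delta$ is smaller than $G_N(p)$ by an additional factor $e^{-c(\delta)N}$; hence $I_4(\sqrt 2)\simd I_4^\delta$.

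Next, as in the proof of Lemma~\ref{l:Toneedge}, the effective support of the integrand in $I_4^\delta$ is a neighbourhood $x-\sqrt 2=O(N^{-1})$ of the edge: because $\phi'(\sqrt 2)\neq 0$, the factor $e^{N\phi(x)}$ localises on scale $N^{-1}$, which is finer than the Airy transition scale $N^{-2/3}$ on which $\phi_{N-1}(\sqrt N x)$ varies. On such a window $a:=x-\sqrt 2$ satisfies $aN^{2/3}\to 0$, so Lemma~\ref{l:phiints}(d) yields, uniformly,
\begin{equation}
  J_N(x)=J^{(N)}_\star(1+o(1)),\qquad
  J^{(N)}_\star:=\begin{cases}\tfrac{2}{3}(2N)^{-1/4},& N\text{ even},\\ -\tfrac{1}{3}(2N)^{-1/4},& N\text{ odd}.\end{cases}
\end{equation}
Pulling $J^{(N)}_\star$ out of the integral then gives
\begin{equation}
  I_4^\delta \simd J^{(N)}_\star\,\Bigl(\tfrac{N}{2}\Bigr)^{1/2}\!\int_{\sqrt 2}^{\sqrt 2+\delta} e^{N\phi(x)}\,\phi_{N-1}(\sqrt N x)\,\d x.
\end{equation}

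The remaining integral is precisely the one computed in the proof of Lemma~\ref{l:Toneedge}, up to the normalisation of $\alpha_N$: since $\alpha_N(x)=\phi_{2m}(\sqrt N x)/\int\phi_{2m}(t)\,\d t\simd \tfrac{(2N)^{1/4}}{2}\phi_{N-1}(\sqrt N x)$ for $N=2m+1$, and the leading Plancherel--Rotach expansion is insensitive to the index shift $N\mapsto N-1$, one obtains
\begin{equation}
  \int_{\sqrt 2}^{\sqrt 2+\delta} e^{N\phi(x)}\,\phi_{N-1}(\sqrt N x)\,\d x \simd 2(2N)^{-1/4}\,G_N(p).
\end{equation}
Using $(N/2)^{1/2}(2N)^{-1/2}=\tfrac{1}{2}$, this delivers $I_4^\delta\simd \tfrac{2}{3}G_N(p)$ for even $N$ and $I_4^\delta\simd -\tfrac{1}{3}G_N(p)$ for odd $N$, proving (a) and (b).

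The main technical obstacle is upgrading the pointwise statement of Lemma~\ref{l:phiints}(d) to a uniform one: I would extend its proof to show $J_N(\sqrt 2+a)=J^{(N)}_\star(1+o(1))$ uniformly for $0\le a\le N^{-1+\varepsilon}$ with some small $\varepsilon\in(0,1/3)$ (on this range $f_N(x)\to 0$ uniformly, so the identity $\int_0^\infty\Ai(z)\,\d z=\tfrac{1}{3}$ still drives the limit). On the complementary range $(\sqrt 2+N^{-1+\varepsilon},\sqrt 2+\delta)$ one uses the crude uniform bound $|J_N(x)|=O(N^{-1/4})$ coming from \eqref{e:JNsym} and Lemma~\ref{l:phiints}(a), combined with the Airy-tail decay of $\phi_{N-1}(\sqrt N x)$ from Lemma~\ref{l:PR}(a), to show that this region contributes only a $o(G_N(p))$ error to $I_4^\delta$ and may be safely discarded when pulling $J^{(N)}_\star$ through the integral.
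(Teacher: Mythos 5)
Your proposal is correct and takes essentially the same route as the paper: localize $I_4^\delta$ to an $O(N^{-1})$ window around $\sqrt 2$, replace $J_N(x)$ there by the parity-dependent constant from Lemma~\ref{l:phiints}(d), and reduce the surviving integral of $e^{N\phi}\phi_{N-1}(\sqrt N\,\cdot)$ to the Airy-edge computation of Lemma~\ref{l:Toneedge}, the factor $(N/2)^{1/2}\cdot 2(2N)^{-1/4}\cdot J^{(N)}_\star$ producing the constants $2/3$ and $-1/3$. You are also right to note that Lemma~\ref{l:phiints}(d) is stated for a single sequence $a_N$ and needs the uniform version you sketch on the $O(N^{-1})$ window; this is a point the paper glosses over with ``the same computations as in Lemma~\ref{l:Toneedge}.''
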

  \begin{proof}
    The lemma follows by the same computations in Lemma~\ref{l:Toneedge}. 
    Again, the dominant contribution comes from neighbourhoods of size
    $O(N^{-1})$ of $\sqrt 2$. On such neighbourhoods, using
    Lemma~\ref{l:phiints}(d), the integral
    $J_N(x)$ appearing  in $T_4(x)$ can be approximated by $cN^{-1/4}$,
    where $c$ depends on $N$ being even or odd. 
  \end{proof}

  Theorem~\ref{t:sharp}(b) follows now directly from the previous three lemmas 
  and definitions \eqref{e:CvF}, \eqref{e:psis}. Observe that the dominant 
  contribution comes from $I_1$ and $I_4$, since $I_2$ and $I_3$ mutually 
  cancel.  Moreover, the combined contributions of $I_1$ and $I_4$ do not 
  depend on $N$ being odd or even.  
\end{proof}

\begin{proof}[Proof of Corollary~\ref{c:sharp}] 
  By Theorem \ref{t:complexityk}, for all  $\varepsilon>0$,
  \begin{equation}
    \begin{split} 
      \E \Crt_{N}(u) &\geq \E \Crt_{N,0}(u) \geq  \E \Crt_{N}(u)- N \E \Crt_{N,1}(u) \\ 
      &\geq   \E \Crt_{N}(u)- N e^{N (\Theta_{1,p}(u)+\varepsilon)}.
    \end{split}
  \end{equation}
  The corollary then follows from the fact 
  that for $u<-E_{\infty}$, $\Theta_{0,p}(u)>\Theta_{1,p}(u)$.
\end{proof}

\appendix
\section{Large deviations for the largest eigenvalues of the GOE} 
\label{a:LDPsection}

In this appendix we extend Theorem~6.2 of \cite{BDG01}, proving a LDP 
for the $k$-th largest eigenvalue of the 
Gaussian Orthogonal Ensemble. 
This results might be of independent interest. 
Its proof follows the lines of \cite{BDG01}.

Let $X=X_N$ be a $N \times N$ real symmetric random matrix whose entries 
$X_{ij}$ are independent (up to the symmetry) centered Gaussian random 
variables with the variance
\begin{equation}
  \label{e:Ms2}
  \mathbb E X_{ij}^2 = \sigma^2N^{-1}(1+\delta_{ij}),
\end{equation}
and let $\lambda_1\le\dots\le\lambda_{N}$ be the ordered eigenvalues of
$X$. Note, that in this appendix we use the notation that is more usual in the
random matrix theory, that is the eigenvalues are numbered from $1$ to
$N$, not from $0$ to $N-1$ as in the rest of the paper. 
We show the following LDP.

\begin{theorem}
  \label{t:LDP}
  For each fixed $k\ge 1$, the $k$-th largest eigenvalue $\lambda_{N-k+1}$
  of $X$ satisfies a LDP with speed $N$ and 
  a good rate function 
  \begin{equation}
    I_k(x;\sigma) = k I_1(x;\sigma ) = 
    \begin{cases}
      \label{e:LDPrate}
      k\int_{2\sigma }^x\sigma^{-1} \sqrt{(\frac{z}{2\sigma })^{2}- 1}\, \d z,
      &\quad \text{if }  x \geq 2\sigma , \\
      \infty, &\quad \text{otherwise}.
    \end{cases}
  \end{equation}
\end{theorem}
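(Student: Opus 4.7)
The plan is to adapt the proof of Theorem~6.2 of \cite{BDG01}, which established the LDP for the largest eigenvalue $\lambda_N$, to the $k$-th largest eigenvalue $\lambda_{N-k+1}$. The starting point is the explicit joint density
\begin{equation}
Q_N(\d\lambda)=\frac{1}{Z_N}\prod_{i<j}|\lambda_i-\lambda_j|\prod_{i=1}^N e^{-N\lambda_i^2/(4\sigma^2)}\,\d\lambda_i\,\indi\{\lambda_1\le\dots\le\lambda_N\},
\end{equation}
together with the LDP for the empirical spectral measure $L_N$ from \cite{BG97} (whose unique minimiser is the semicircle law $\rho_{sc}$ of radius $2\sigma$) and the known asymptotics of $\log Z_N$. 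The guiding heuristic is that pushing the top $k$ eigenvalues out of the bulk to a common level $x>2\sigma$ costs exactly $k$ times the one-eigenvalue cost $I_1(x;\sigma)$, while the remaining $N-k$ eigenvalues continue to distribute themselves according to $\rho_{sc}$.

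For the upper bound with $x>2\sigma$, I would separate the top $k$ ordered eigenvalues $\mu_1\le\dots\le\mu_k$ from the bottom $N-k$ eigenvalues $\eta_1\le\dots\le\eta_{N-k}$ on the event $\{\lambda_{N-k+1}\ge x\}$, yielding
\begin{equation}
\Pro[\lambda_{N-k+1}\ge x]=\frac{Z_{N-k}}{Z_N}\int_{x\le\mu_1\le\dots\le\mu_k}\prod_{i<j}|\mu_i-\mu_j|\prod_i e^{-N\mu_i^2/(4\sigma^2)}\,G_N(\mu)\,\d\mu,
\end{equation}
where $G_N(\mu)=\E\big[\prod_{i,\ell}|\mu_i-\eta_\ell|\indi\{\eta_{N-k}\le\mu_1\}\big]$ is taken under the GOE of size $N-k$ with suitably rescaled variance. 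By \cite{BG97}, $\frac{1}{N}\log G_N(\mu)$ converges uniformly on compacts of $\{\mu:\mu_1>2\sigma\}$ to $\sum_i\int\log|\mu_i-y|\rho_{sc}(\d y)$ up to a deterministic constant that matches $\log(Z_N/Z_{N-k})$; combined with the Gaussian potential $-\mu_i^2/(4\sigma^2)$, this is precisely $-\sum_iI_1(\mu_i;\sigma)$ to leading order, exactly as in \cite{BDG01} for $k=1$. A Varadhan-type argument then gives
\begin{equation}
\limsup_{N\to\infty}\frac{1}{N}\log\Pro[\lambda_{N-k+1}\ge x]\le-\inf_{\mu_1,\dots,\mu_k\ge x}\sum_{i=1}^kI_1(\mu_i;\sigma)=-k\,I_1(x;\sigma),
\end{equation}
the infimum being attained at $\mu_i=x$ by monotonicity of $I_1$ on $[2\sigma,\infty)$. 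For the matching lower bound I would restrict to $\mu_i\in[x,x+\delta]$ and to configurations of the $\eta_\ell$ whose empirical measure lies in a weak neighbourhood of $\rho_{sc}$; on this event the same decomposition gives an integrand lower bound of $\exp\{-N(kI_1(x;\sigma)+o_\delta(1))\}$, and the $\eta$-event has probability tending to one by \cite{BG97}. Sending $\delta\downarrow 0$ closes the bound. The regime $x<2\sigma$, where $I_k(x;\sigma)=+\infty$, follows immediately from the $N^2$-speed LDP of \cite{BG97}, which forces $\lambda_{N-k+1}\to2\sigma$ with superexponential speed.

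The main technical obstacle is the uniform convergence of $\frac{1}{N-k}\sum_\ell\log|\mu_i-\eta_\ell|$ to $\int\log|\mu_i-y|\rho_{sc}(\d y)$ in $\mu$ on compact subsets of $(2\sigma,\infty)^k$: the logarithmic singularity at the edge $2\sigma$ must be controlled via a cut-off together with the $N^2$-LDP for $L_N$, and one must also verify that the top-eigenvalue Vandermonde factor $\prod_{i<j}|\mu_i-\mu_j|$ contributes only a polynomial correction that disappears on the $\frac{1}{N}\log$ scale, even though the infimum of $\sum_iI_1(\mu_i;\sigma)$ is formally attained at the collision $\mu_1=\dots=\mu_k=x$ where this Vandermonde vanishes. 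These are routine extensions of the analysis in \cite{BDG01}, but the case $k>1$ genuinely requires uniform rather than pointwise convergence of the logarithmic potential and a careful, non-trivial treatment of near-collisions among the top eigenvalues.
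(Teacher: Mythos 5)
Your proposal is correct and follows essentially the same route as the paper: split off the top $k$ eigenvalues, apply the empirical-measure LDP of Ben Arous--Guionnet to the remaining $N-k$ via the logarithmic potential $\Phi(z,\mu)=\int\log|z-y|\,\mu(\d y)-z^2/4$, match the normalizations $Z_N/Z_{N-k}$, and dismiss the $x<2\sigma$ regime by the speed-$N^2$ LDP. The paper's implementation differs only in bookkeeping (it works with unordered eigenvalues and an explicitly rescaled measure $\bar Q^N_{N-k}$, bounds the top-$k$ Vandermonde crudely by $(2M)^{k(k-1)/2}$, and invokes upper semicontinuity of $\Phi$ on a $\delta$-ball rather than uniform convergence of the potential), and all the technical obstacles you flag are handled there exactly along the lines you sketch.
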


\begin{proof}
  We first recall some know fact about the distribution of $\lambda_i$'s and
  introduce some notation.
  The joint law of $\lambda_i$, $1\leq i\leq N$, is given by
  \begin{equation}
    \label{e:LawGOE}
    Q_N (\d \lambda_1, \ldots, \d \lambda_N) = 
    Z_{N}(\sigma )^{-1}
    \prod_{1\leq i<j \leq N}
    |\lambda_i - \lambda_j|
    \prod_{i=1}^{N}\exp \Big(-\frac{N}{4\sigma^2} \lambda_i^2\Big) \d
    \lambda_i\indi_{\lambda_1\le \dots \le \lambda_N}.
  \end{equation}
  The distribution of unordered eigenvalues $\bar Q_N$ is given by
  the same formula without the final indicator function and with
  $Z_N(\sigma )$
  replaced by $\bar Z_N(\sigma )=N! Z_N(\sigma )$.
  By Wigner's theorem, the spectral measure
  $L_N=N^{-1}\sum_{i=1}^N\delta_{\lambda_i}$ of $X_N$ converges weakly in
  probability to the semi-circle distribution  
  \begin{equation}
    \label{e:rhosigma}
    \rho(\d x) = (2\pi\sigma^2)^{-1}\sqrt{(2\sigma)^2-x^2}\,\indi_{|x|\le
      2\sigma }\, \d x.
  \end{equation}
  For  $A \subset \R$ we denote by $\mathcal P(A)$ the space of all Borel 
  probability measures on $A$ endowed with the weak topology and a compatible 
  metric $d$.  By Theorem~1.1 of \cite{BG97}, the spectral measure $L_N$ 
  satisfies a LDP on $\mathcal P(\mathbb R)$ with the speed $N^2$ and a good 
  rate function  whose unique minimiser is the semi-circle 
  distribution~\eqref{e:rhosigma}.
  
  We now start proving Theorem~\ref{t:LDP}. $I_k(x;\sigma )$ is obviously
  good rate function. Since $I_k(x,\sigma)$ is scale invariant, that is 
  \begin{equation}
    \label{e:Iscal}
    I_k(x;\sigma )=I_k(x/\sigma ;1),
  \end{equation}
  we can assume that $\sigma=1$ as in \cite{BDG01}, and omit $\sigma $ from the 
  notation. Note that this value differs from the value  $\sigma =2^{-1/2}$ 
  used in the rest of the present paper (cf.\eqref{e:Ms}). In 
  particular, for $\sigma =1$ (cf.~\eqref{e:defZ})
  \begin{equation}
    \label{e:defZsigma}
    \bar Z_N=\bar Z_N(1)=(2\sqrt 2)^N \Big(\frac N2\Big)^{-N(N+1)/4}
    \prod_{i=1}^N \Gamma \Big(1+\frac i2\Big).
  \end{equation}

  To show Theorem~\ref{t:LDP} it is sufficient to prove
  \begin{equation}
    \label{e:LargeDeviation1}
    \limsup_{N\rightarrow \infty} \frac{1}{N} \log Q_N \big(\lambda_{N-k+1}
      \leq x\big) = -\infty \qquad \text{for all $x<2$},
  \end{equation}
  and, since $I_k(x;1 )$ is continuous and strictly increasing on
  $[2 , \infty)$,
  \begin{equation}
    \label{e:LargeDeviation2} 
    \limsup_{N\rightarrow \infty} \frac{1}{N} \log 
    Q_N \big(\lambda_{N-k+1} \geq x\big) = -I_k(x;1 )\qquad \text{for all
      $x\ge 2$}.
  \end{equation}

  We first prove \eqref{e:LargeDeviation1}. Suppose that
  $\lambda_{N-k+1}\le x$ for some $x<2 $. Then
  $L_N((x,2])\le(k-1)N^{-1}$. Since $\rho ((x,2])>0$, for
  $N$ large enough there exists a closed set
  $A\subset \mathcal P(\mathbb R)$ such that $\rho \notin A$ and
  $\{\lambda_{N-k+1}\le x\}\subset A$. By the LDP for the spectral measure $L_N$,
  $Q_N(A)\le e^{-c N^2}$ for some $c>0$, which concludes the proof of
  \eqref{e:LargeDeviation1}.

  We now prove the upper bound for \eqref{e:LargeDeviation2}. By Lemma~6.3 of 
  \cite{BDG01}, 
  \begin{equation}
    \label{e:fink}
    \bar Q_N\Big( \max_{i=1}^{N} |\lambda_i| \geq M\Big) \leq e^{-NM^2/9}
    \qquad
    \text{for all $M$ large enough and all $N$.}
  \end{equation}
  Writing
  \begin{equation}
    Q_N(\lambda_{N-k+1}\ge x)\le Q_N\big(\max_{i=1}^N|\lambda_i|\ge M\big)
    +Q_N\big(\lambda_{N-k+1}\ge x,\max_{i=1}^N|\lambda_i|<M\big),
  \end{equation}
  the upper bound follows easily provided we show that for all $M>x>2 $   
  \begin{equation}
    \label{e:slld}
    \limsup_{N \rightarrow \infty} \frac{1}{N} \log  
    Q_N\big( \max_{i=1}^N |\lambda_i| \leq M, \lambda_{N-k+1} \geq x\big) 
    \leq -
    I_k(x;1 ).
  \end{equation}

  To show \eqref{e:slld} we introduce some additional notation. Let
  $\bar Q_{N-k}^N$ be a measure on $\mathbb R^{N-k}$ given by
  \begin{equation}
    \label{e:Qredux}
    \bar Q_{N-k}^N\big(\lambda \in \cdot \big) = \bar Q_{N-k} \big( (1-kN^{-1})^{1/2} \lambda \in \cdot \big).
  \end{equation}
  We set 
  \begin{equation}
    \label{e:defCnk}
    C_N^k=\Big(1-\frac k N\Big)^{(N-k)(N-k+1)/4}
    \frac{\bar Z_{N-k}}{\bar Z_N},
  \end{equation}
  and for $x\in \mathbb R$ and $\mu \in \mathcal P(\mathbb R)$ we define
  \begin{equation}
    \Phi(z,\mu) = \int_{\mathbb R} \log |z-y| \mu(\d y) -
    \frac{z^2}{4}.
  \end{equation}
  It was shown in \cite[p.~50]{BDG01} that $\Phi(z,\mu)$ is upper 
  semi-continuous on $[-M,M]\times \mathcal P([-M,M])$ and continuous on 
  $[x,y] \times \mathcal P([-M,M])$ for all $M, x, y \in \R$ such that
  $y>x>M>2$.

  Using \eqref{e:LawGOE} and this notation, we can write
  \begin{equation}
    \begin{split}
      \label{e:LDa}
      Q_N\big(& \max_{i=1}^N |\lambda_i| \leq M, \lambda_{N-k+1} \geq x\big)\\
      &=
      Z_N^{-1} 
      \int_{[x,M]^{k}} \prod_{i=N-k+1}^Ne^{-N\lambda_i^2/4}\d \lambda_i
      \int_{[-M,M]^{N-k}} \prod_{i=1}^{N-k}e^{-N\lambda_i^2/4}\d \lambda_i\\
      &\qquad\times \prod_{1\leq i<j \leq N} |\lambda_i - \lambda_j| 
      \indi_{\lambda_1\le \dots \le \lambda_N}\\
      &\le C_N^k \frac {N!}{(N-k)!}
      \int_{[x,M]^k} \prod_{N-k< i<j \leq N} |\lambda_i - \lambda_j|
      \d \lambda_{N-k+1}\dots \d \lambda_N
      \\&\quad\times \int_{[-M,M]^{N-k}}
      e^{(N-k)\sum_{i=N-k+1}^N\Phi (\lambda_i,L_{N-k})}
      \bar Q_{N-k}^{N}(\d \lambda_1,\dots, \d \lambda_{N-k}),
      \end{split}
  \end{equation}
  where the factor $N!/(N-k)!$ comes from replacing $Q_N$ by $\bar Q_N$. Let 
  $B(\rho, \delta)$ denote the open ball in $\mathcal P(\mathbb R)$ of radius 
  $\delta >0$ and center $\rho$. We write 
  $B_M(\rho,\delta)=B(\rho,\delta )\cap \mathcal P([-M,M])$. On the domain of 
  the integration   $|\lambda_i-\lambda_j|\le 2M$ and 
  $e^{(N-k)\Phi (\lambda_i, L_{N-k})}\le (2M)^{N-k}$. Therefore by splitting 
  the second integral, \eqref{e:LDa} is bounded from above by
  \begin{equation}
    \begin{split}
      \label{e:LDb}
      C_N^k\frac {N!}{(N-k)!}
      (2M)^{k(k-1)/2}
      \Big\{&\Big(
        \int_{x}^M  e^{(N-k)\sup_{\mu \in B_M(\rho ,\delta )}\Phi(z ,\mu) }
        \,\d z\Big)^k
      \\&+(2M)^{N-k} \bar Q_{N-k}^N(L_{N-k}\notin B(\rho ,\delta ))
      \Big\}.
  \end{split}
  \end{equation}

  To control the measure $\bar Q_{N-k}^N$, observe that 
  for all functions $h:\mathbb R\to \mathbb R$ of Lipschitz norm at most $1$ and 
  $N \geq 2k$,
  \begin{equation}
    \Big|(N-k)^{-1} \sum_{i=1}^{N-k}\big\{ 
      h\big((1-kN^{-1})^{1/2} \lambda_i\big) -
      h(\lambda_i)\big\} \Big|
      \leq c N^{-1} \max_{i=k}^{N-k} |\lambda_i|
  \end{equation}
  for some $c\in (0,\infty)$ independent of $N$ and $k$. It follows from
  \eqref{e:fink} that the laws of $L_{N-k}$ under $\bar Q_{N-k}$ and
  $\bar Q^N_{N-k}$ are exponentially equivalent as $N\to \infty$.
  Therefore,  by Theorem~4.2.13 of \cite{DZ98}, $L_{N-k}$ under 
  $\bar Q^{N}_{N-k}$ satisfies the same LDP as $L_{N-k}$ under
  $\bar Q_{N-k}$.  Hence, the 
  second term in \eqref{e:LDb} is exponentially negligible for any 
  $\delta > 0$ and $M < \infty$. This implies that
  \begin{equation}
    \begin{split}
      \label{eq:psoapsd}
      \limsup_{N \rightarrow \infty} \frac{1}{N} \log  
      Q_N&\big( \max_{i=1}^{N} |\lambda_i| \leq M, \lambda_{N-k+1} \geq x\big) 
      \\&\leq \limsup_{N \rightarrow \infty} \frac{1}{N} \log C_N^k  
      + k \lim_{\delta \downarrow 0} \sup_{z \in [x,M], \mu \in B_M(\rho, \delta)} \Phi(z,\mu).
    \end{split}
  \end{equation}
  The same reasoning as on p.~50 of \cite{BDG01} implies that the second term in 
  \eqref{eq:psoapsd} equals $-k(1/2 + I_1(x;1))$.  From definition 
  \eqref{e:defCnk} of $C_N^k$ and from \eqref{e:defZsigma}, it is easy to 
  obtain $\lim_{N\rightarrow \infty} N^{-1} \log C_N^k = k/2$. Combining these 
  two claims,  we can bound the left-hand side of \eqref{e:slld} by 
  $-k(1/2+I_1(x;1))+k/2=k I_1(x;1)$. This completes the proof of \eqref{e:slld} 
  and thus of the upper bound for \eqref{e:LargeDeviation2}. 

  To prove the complementary lower bound we fix $ y > x > r > 2$ and
  $\delta > 0$. By a similar computation as in \eqref{e:LDa} we obtain
  \begin{equation}
    \begin{split}
      Q_N\big(&\lambda_{N-k+1} \geq x\big)  
      \geq \bar Q_N\big(\lambda_N \in [x,y], \dots, \lambda_{N-k+1} \in [x,y], 
        \max_{i=1}^{N-k} |\lambda_i| \leq r \big) 
      \\ &= C_N^k \int_{[x,y]^k} \prod_{i=N-k+1}^{N}e^{-k \lambda_i^2/4} \d
      \lambda_i
      \prod_{N-k < i < j \leq N} |\lambda_i - \lambda_j| 
      \\ &\quad\times \int_{[-r,r]^{N-k}}
      e^{(N-k)\sum_{i=N-k+1}^{N}\Phi(\lambda_i,L_{N-k})} 
      \bar Q_{N-k}^N (\d \lambda_1,\dots,\d \lambda_{N-k})
      \\&\ge  K C_N^k
      \exp \Big(k (N-k) \inf_{z \in [x,y], \mu \in B_r (\rho, \delta)} 
        \Phi (z, \mu)\Big) 
      \bar Q^N_{N-k}\big(L_{N-k} \in B_r(\rho, \delta)\big),
    \end{split}
  \end{equation}
  for some $K= K(k,x,y)>0$.

  Using the LDP for the measure $L_{N-k}$ under
  $\bar Q_{N-k}^N$ we see that
  \begin{equation}
    \bar Q_{N-k}^N\big(L_{N-k}
      \notin B_r(\sigma, \delta)\big) \xrightarrow{N\to\infty} 0.
  \end{equation} 
  By symmetry of $\bar Q_N(\cdot)$ and by the upper bound in
  \eqref{e:LargeDeviation2},
  \begin{equation}
    \bar Q^N_{N-k}\big(L_{N-k} \notin \mathcal P((-r, r))\big) 
    \leq 2 \bar Q_{N-k} (\lambda_{N-k} \geq r)\xrightarrow{N\to\infty} 0.
  \end{equation}
  Therefore, using the behavior of $C_N^k$ again,
  \begin{equation}
    \liminf_{N\rightarrow \infty} \frac{1}{N} \log Q_N
    \big(\lambda_{N-k+1} \geq x\big) \geq \frac{k}{2} + k \inf_{z \in [x,y], \mu
      \in B_r (\rho, \delta)} \Phi (z, \mu).
  \end{equation}
  Letting now $\delta \to 0$ and then $y\searrow x$, using the continuity
  of $\Phi (z, \mu)$ in the used range of the parameters, we obtain the
  desired lower bound.
\end{proof}

\bibliography{complexity}
\bibliographystyle{amsalpha}
\end{document}